\newtheorem{proposition}{Proposition}[section]
\newtheorem{lemma}[proposition]{Lemma}
\newtheorem{theorem}[proposition]{Theorem}
\newtheorem{corollary}[proposition]{Corollary}
\theoremstyle{remark}
\newtheorem{remark}[proposition]{Remark}
\theoremstyle{definition}
\newtheorem*{definition}{Definition}
\def\real{\mathbb{R}}
\def\integer{\mathbb{Z}}
\def\complex{\mathbb{C}}
\def\supp{\mathrm{supp}}
\def\var{\mathrm{var}}
\def\sp{\mathrm{sp}}
\def\id{\mathrm{id}}
\def\BB{\mathcal{B}}
\def\DD{\mathcal{D}}
\def\EE{\mathcal{E}}
\def\FF{\mathcal{F}}
\def\GG{\mathcal{G}}
\def\II{\mathcal{I}}
\def\JJ{\mathcal{J}}
\def\KK{\mathcal{K}}
\def\LL{\mathcal{L}}
\def\MM{\mathcal{M}}
\def\NN{\mathcal{N}}
\def \PP{\mathcal {P}}
\def \QQ{\mathcal {Q}}
\def \RR{\mathcal {R}}
\def\SS{\mathcal{S}}
\begin{document}
\title{Linear response formula for piecewise expanding unimodal maps}
\author{Viviane Baladi and Daniel Smania} 
\address{UMI 2924 CNRS-IMPA,  Estrada Dona Castorina 110,
22460-320 Rio de Janeiro, Brazil;
and CNRS, UMR 7586, Institut de Math\'ematique de 
Jussieu, Paris;
Current address: D.M.A., UMR 8553,\'Ecole Normale Sup\'erieure,  75005 Paris, France}
\email{viviane.baladi@ens.fr}

\address{
Departamento de Matem\'atica,
ICMC-USP, Caixa Postal 668,  S\~ao Carlos-SP,
CEP 13560-970
S\~ao Carlos-SP, Brazil}
\email{smania@icmc.usp.br}
\date{\today} 
\begin{abstract}
The average $\RR(t)=\int \varphi\, d\mu_t$
of a smooth function $\varphi$
with respect to the SRB measure $\mu_t$ of a smooth one-parameter family
$f_t$ of piecewise expanding interval maps
is not always Lipschitz  \cite{Ba}, \cite{MM}.
We prove that if $f_t$ is tangent to the topological class
of $f$, and if $\partial_t f_t|_{t=0}=X\circ f$, then $\RR(t)$ is differentiable at zero,
and  $\RR'(0)$ coincides with the resummation
proposed in \cite{Ba} of the (a priori divergent) series 
$\sum_{n=0}^\infty
\int X(y) \partial_y (\varphi \circ f^n)(y) \, d\mu_0(y)$
given by Ruelle's conjecture.
In fact, we show that $t \mapsto \mu_t$ is differentiable within
Radon measures. Linear response is violated 
if and only if $f_t$ is transversal to the topological class
of $f$.
\end{abstract}
\thanks{V.B. is partially supported by ANR-05-JCJC-0107-01.
D.S. is partially supported by CNPq 470957/2006-9
and 310964/2006-7, FAPESP 2003/03107-9. Both authors
thank the organisers of the workshop Topologia e Din\^amica
at UFF, Niteroi, Brasil, February 2007, where this work was started.
V.B. gratefully acknowledges the warm hospitality
of Universidad de la Rep\'ublica, Montevideo, Uruguay, where
part of this work was done.}
\maketitle


\section{Introduction}

Let us call SRB measure
for  a dynamical system $f: \MM \to \MM$, on a 
manifold $\MM$ endowed with Lebesgue measure, 
an $f$-invariant ergodic probability measure $\mu$ so that  the set
$
\{ x \in \MM\mid \lim_{n \to \infty}
\frac{1}{n} \sum_{k=0}^{n-1} \varphi(f^k(x))= \int \varphi \, d\mu \} 
$
has positive Lebesgue measure, for continuous
observables $\varphi$.
(In fact this defines a {\it physical measure,}  see
e.g. \cite{Yo}.)
If $f_t$ is a smooth one-parameter family
with $f_0=f$, and each $f_t$
admits a unique SRB measure $\mu_t$, it is natural to ask
how $\mu_t$ depends on $t$. 
More precisely, one studies, for fixed smooth enough $\varphi$, the
function
$
\RR(t)= \int \varphi \, d\mu_t
$.

If $f$ is a sufficiently
smooth uniformly hyperbolic diffeomorphism restricted to a transitive attractor,
Ruelle \cite{Ruok}--\cite{Ruok'} proved
 that $\RR(t)$
is differentiable 
at $t=0$.
In addition, Ruelle gave an explicit formula for
$\RR'(0)$, depending on $f_t$ only through its linear
part (the ``infinitesimal
deformation") $v= \partial_t f_t|_{t=0}$. For obvious reasons, this formula is called the
{\it linear response formula.}
See  \cite[Cor. 1 p. 595]{KKPW} -- noting that
$f$ and $\rho$ in the statement there need in fact only be H\" older  --
for a previous results in continuous-time the Anosov setting, without
an explicit formula for $\RR'(0)$.
We refer to the introductions of \cite{Dolgo}, \cite{BuL},  \cite{Ba}, for a discussion of more
references regarding linear response for hyperbolic
dynamical systems, including \cite{BuL}, \cite{BFG}, \cite{JL}, and applications
to statistical mechanics \cite{Gal}.

A much more difficult situation consists in studying
nonuniformly hyperbolic interval maps $f$, e.g. smooth
unimodal maps.  For some of these maps, in particular those which
satisfy the Collet-Eckmann condition, there exists a unique
SRB measure $\mu$. Two new difficulties are
that structural stability does not hold (in a rather drastic
way\footnote{As was explained to us by D. Dolgopyat,
the examples in \cite[Section 2.3(B)]{Dolgo} may fail to be structurally
stable. However, shadowing holds for a sufficiently
large measure of points so that 
Theorem~ 1--Proposition ~ 2.6  of \cite{Dolgo} provide a linear
response formula in the sense of Whitney.}), and that $f_t$ will not
always have an SRB measure even if $f$ has one.
In this setting,  Ruelle (\cite{Rupr}, \cite{Ru3})
has outlined a program,  for infinitesimal
deformations of the form $v=X \circ f$. He proposed
$\Psi(1)$, where 
\begin{equation}\label{intro}
\Psi(z)=\sum_{n=0}^\infty
\int z^n X(y)   {\partial_y} (\varphi \circ f^n) (y)\, d\mu_0(y)\, ,
\end{equation}
is the ``susceptibility function,"
\footnote{Since $\Psi(e^{i\omega})$
is the Fourier transform of the ``linear response" 
\cite{Ru1ph}, it is natural to consider the variable
$\omega$, but we prefer to work with the variable $z=e^{i\omega}$.}
as a candidate 
for the derivative,  in the sense of Whitney's extension, of $\RR(t)$ at $t=0$. 
(We refer
e.g. to the introduction of \cite{Ba} for more details.)
Beware that the series (\ref{intro})
may diverge at $z=1$ so that  $\Psi(1)$ needs to be suitably
interpreted.

\smallskip
In this paper, just like in \cite{Ba}, we consider a simpler situation
which exhibits however a similar bifurcation structure
(in particular structural stability does not hold and infinitely
many symbols may be required to code the dynamics):
piecewise expanding interval maps. For such maps, it has been known
for some time that $\mu_t$ exists for all  $t$, and, 
under mild assumptions, that
$\RR(t)$  has modulus of continuity
$O(t \ln|t|)$ (see (\ref{claim'}) below and the references given there).
We view the setting of piecewise expanding interval maps as
a laboratory in which to test our ideas about smooth deformations.
The arguments are free from technicalities, but 
exhibit most of the features that will appear in the
Collet-Eckmann case.

Let us recall now recent results in this piecewise expanding setting.
Assuming  that $\partial_t f_t|_{t=0}=X\circ f$, a 
function $(f,X)\mapsto \JJ(f,X)$ was introduced in \cite{Ba} 
(see (\ref{saltusid})).
There exist (\cite{Ba}, \cite{MM}) examples
of piecewise expanding unimodal interval maps $f_t$  so that  
$\RR(t)$ is not Lipschitz.
For these counterexamples, it turns out that $\JJ(f,X) \ne 0$.
The function $\Psi(z)$ is holomorphic \cite{Ba}  in the
open unit disc. 
In addition, if  $\JJ(f,X)=0$ and $f$ is Markov
(i.e., the postcritical orbit  is finite) then
$\Psi(z)$ is holomorphic at $z=1$ (\cite{Ba}). If $\JJ(f,X)=0$ but $f$ is
not Markov  a resummation $\Psi_1$ was devised \cite{Ba} for
the possibly divergent series $\Psi(1)$
(see Proposition~\ref{candidate} below).
In view of the above facts (see also \cite[Remark 4.5]{Ba}), 
a modification  
of Ruelle's conjecture, 
was proposed in \cite[Conjecture A]{Ba} for perturbations
of piecewise
expanding or Collet-Eckmann $f$, assuming in addition
that each $f_t$ {\it is topologically conjugated to}
~$f$. 

\smallskip
The main result of this paper is the proof of Conjecture A from \cite{Ba} in the
piecewise expanding setting. In fact, we prove a slightly stronger
result (Theorem~\ref{formula}): It is enough to assume that 
$f_t$ is {\it tangent} to the topological class of $f$ (see \S2.1).
Also, the observable $\varphi$ need only be continuous, so that
in fact we prove that $t \mapsto \mu_t$ is differentiable into Radon measures.
The interpretation of $\Psi(1)$ in Theorem~\ref{formula} is
in the sense of $\Psi_1$ from  \cite{Ba}, and we find a more
compact expression for $\Psi_1$, as well as a condition ensuring that
$\Psi_1$ is the abelian limit of $\Psi(z)$ (Proposition~ \ref{abelian}).

Our approach to
prove Theorem~\ref{formula}
is a perturbative spectral analysis (via resolvents)  of transfer operators,
on suitable spaces, adapted from those in \cite{Ba}.
\footnote{The spaces in \cite{Ba} were inspired by what Ruelle told us about his then
ongoing work on the nonuniformly expanding case \cite{Ruwip}.}
(In spirit, this is somewhat similar to the work of Butterley-Liverani \cite{BuL}.)
To perform this analysis, we  use the Keller-Liverani
\cite{KL} results together with
smooth motions
(Proposition~\ref{TCE}) and the twisted cohomological
equation for $f$ and $X \circ f$. The novelty of this
work resides in the combination of these two ingredients.
A key new ingredient in the implementation of our ideas
is the use of the isometry $G_t$ in the proof of Theorem~\ref{formula}:
this isometry is the device which allows us to use the same Banach space
for the transfer operators of all perturbations, by forcing the
singularities (here, jumps) to lie on a prescribed set.

\smallskip
We next summarise informally the picture for  piecewise expanding,
piecewise smooth unimodal maps (see \S~\ref{2.1} for assumptions).
If the critical point is not periodic, noting $f^0=\id$,
we say that
$v$ is horizontal for $f$
if $ \sum_{j = 0}^{\infty} \frac{v(f^j(c))}{(f^{j})'(f(c))}=0$
(see (\ref{Mf}) for the periodic case). Then:

 \renewcommand{\labelenumi}{(\roman{enumi})}
\begin{enumerate}
\item
 $\JJ(f,X)=0$
if and only if  $X$ is
horizontal for $f$ (Corollary \ref{horiz}).
\item
$X\circ f$ is  horizontal for $f$  if and only
if the candidate $\Psi_1$  from \cite{Ba} for the derivative is well-defined
(Proposition~\ref{candidate} from \cite{Ba},  Proposition~\ref{nono}).
\item
If $f_t$ is tangent to the topological class of $f$ then
$\partial_t f_t|_{t=0}$ is horizontal for $f$
(Corollary~\ref{horiz}).
\item
If $v$  is horizontal for $f$, then any 
$f_t$  with $\partial_t  f_t|_{t=0}=v$
is tangent to the topological class of $f$. (Theorem~\ref{???} below, to appear in \cite{??}.)
\item
If $f_t$ is  stably mixing
\footnote{Beware that if $f$ is not stably mixing, then
there exist $f_t$ with $\partial_t f_t|_{t=0}=X\circ f$
horizontal and $\Psi(z)$
holomorphic at $0$, but $\RR(t)$ not Lipschitz.} and tangent to the topological class of $f$ 
with $\partial_t f_t|_{t=0}=X\circ f$,
then $\RR(t)$ is differentiable at $t=0$, and the linear response
formula $\RR'(0)=\Psi_1$ holds
(Theorem~\ref{formula}).
\item
If
$\partial_t f_t|_{t=0}$ is  not horizontal and $c$ is not periodic
for $f$ then  there exists $C^\infty$ observable $\varphi$
so that $\RR(t)$ is not Lipschitz 
(Theorem~\ref{nonlip},
see \cite{Ba}, \cite{MM} for isolated examples).
\end{enumerate}

\medskip

In view of the results of the present paper, we expect that the following
strengthening of Conjecture A \cite{Ba} in the Collet-Eckmann case  holds:

\smallskip
{\bf Conjecture $\mathbf {A'}$.}
Let $f$ be   a mixing smooth Collet-Eckmann unimodal map with
a nonflat critical point.
Let $f_t$ be a smooth perturbation,
with $f_0=f$ and $\partial_t f_t|_{t=0}=X \circ f$, which is
{\it tangent to the topological class of $f$} (i.e., so that there
exists $\tilde f_t$
such that $|\tilde f_t-f_t|=O(t^2)$
and each $\tilde f_t$ is topologically conjugated to
$f$). Then $\RR(t)$ is differentiable at $0$
{\it in the sense of Whitney} for
all smooth observables $\varphi$, and $\RR'(0)= \Psi(1)$ (the
infinite sum being suitably interpreted).

\smallskip
In particular, if $f_t$ remains in the topological class of a
Collet-Eckmann map $f$, Conjecture~A' is just \cite[Conjecture A]{Ba}, where
differentiability of $\RR(t)$ is foreseen in the usual sense.
We  expect (see Conjecture B in \cite{Ba}) that  paths $f_t$
which are {\it not}  tangent to conjugacy classes
give rise to $\RR(t)$
which are in general  H\"older but not Lipschitz in the sense of Whitney.
Note that topological classes are called hybrid classes
in this context, and they form a  well understood lamination for 
smooth  maps with a quadratic critical point 
(see \cite{Ly}, \cite{ALM} and references therein).

\medskip

This work is about the linear response. One can
also wonder about formulas for the derivatives of higher order
of $\RR(t)$ (see \cite{Ruhi}).
Indeed, we expect that a suitable modification
of the proof of Theorem~\ref{formula} will give, if
$f_t$ is a $C^{r_0, r_0+1}$
perturbation, tangent up to order $r_0-1$ to the topological class
of a stably mixing piecewise expanding unimodal map $f$
(i.e., we replace $|f_t-\tilde f_t|=0(t^2)$ by
$O(t^{r_0})$ for $r_0 \ge 3$ in \S~\ref{2.1}), that $\RR(t)$ has
a Taylor series of degree $r_0-1$ at $0$, with explicit
coefficients  (in the spirit of \cite{Ruhi}). The coefficients
will be related to twisted cohomological equations for derivatives
of higher order of $h_t$ (see the proof of Proposition~\ref{TCE}).
In the Collet-Eckmann setting, if
$f_t$ is tangent to the hybrid class of $f$
up to order $r_0-1$, then we expect that higher order derivatives
and Taylor series of degree $r_0-1$ should be attainable,
of course in the sense of Whitney. (If $f_t$ lies in the hybrid class,
we expect a Taylor series in the usual calculus sense.)

\smallskip

The paper is organised as follows: Section~\ref{two} contains
definitions, and the essential result on the ``smooth motions" $h_t(x)$
(Proposition~\ref{TCE}). The
infinitesimal conjugacy $\alpha=\partial_t h_t|_{t=0}$
is introduced there. In Section~\ref{three}, we recall
the decomposition of the invariant density from
\cite{Ba}, we  adapt results from \cite{KL} on the transfer operators to
reduce from families tangent to the topological class
to families within the topological class (Proposition~\ref{fromKL}),
and we introduce appropriate spaces $\BB_{t}$ for transfer operators (Subsection~\ref{3.2})
of sums of a ``smooth" function with a sum of jumps along the postcritical orbit.
In Section ~\ref{fourr}, we recall information from \cite{Ba}
on the susceptibility function $\Psi(z)$ and the candidate
$\Psi_1$  for the derivative of $\RR(t)$.
We prove Theorem~\ref{formula} in Section~ ~\ref{four}, combining
the main ingredients (Proposition~\ref{TCE}, Proposition~\ref{fromKL},
and the spectral analysis on the function spaces $\BB_{t}$ from Subsection~\ref{3.2}).
The proof uses strongly the perturbation
theory  from Keller and Liverani \cite{KL} (we need to extend
their result slightly, see Appendix~\ref{newKL}).
Section \ref{elegg} contains (Theorem~\ref{elegant})
a simpler formula for $\RR'(0)$, which is true if and only
if $\alpha$ is absolutely continuous (a rare event).
Theorem~\ref{nonlip} in the last section shows that the condition to be
tangent to the topological class is necessary. 

\bigskip
After the first version of the present paper was made public,
David Ruelle sent us a copy of \cite{RuStr},
which contains in particular
a proof of \cite[Conjecture~ A]{Ba} under the additional assumptions
that $f_0$ is analytic and has a nonrecurrent postcritical
orbit.
We hope that injecting
in our argument  tools analogous to those  developed there
should eventually give a proof of Conjecture ~A' for Collet--Eckmann maps.

\section{The setting, the twisted cohomological equation and the
infinitesimal conjugacy $\alpha$} 
\label{two}

\subsection{Piecewise expanding $C^{r}$ unimodal maps and their perturbations}
\label{2.1}

If $K\subset \real$ is a compact interval and $\ell \ge 0$,
we let $C^\ell(K)$ denote the set of functions on $K$
which extend to $C^\ell$ functions in an open neighbourhood of $K$.
In this work, we consider the following objects:

\begin{definition} 
For an integer $r \ge 1$, a {\it piecewise expanding $C^{r}$ unimodal map}
is a continuous map
$f:I\to I$, where $I=[a,b]$,  so that $f$ is strictly increasing
on $I_+=[a,c]$, strictly decreasing on $I_-=[c,b]$
($a<c<b$), with $f(a)=f(b)=a$;
and
for $\sigma=\pm$, the map
$f|_{I_\sigma}$ extends to a  $C^{r}$ map
on a neighbourhood of $I_\sigma$, with
\footnote{A prime denotes derivation with respect to $x\in I$, a priori in the sense of distributions.} $\inf |f'|_{I_\sigma}|  > 1$.

\noindent
A piecewise expanding $C^{r}$ unimodal map $f$ is {\it good} if
either $c$ is  not periodic under $f$
or  $\inf |(f^{n_1})' |> 2$, where $n_1\ge 2$ is the minimal
period of $c$; it is  {\it mixing} if $f$ is topologically mixing on
$[f^2(c), f(c)]$.
\end{definition}

Beware that  a  piecewise expanding $C^{r}$ unimodal map $f$
is only continuous, and never $C^1$ (it is piecewise $C^r$).
We restrict to unimodal (as opposed to multimodal)
to avoid unessential combinatorical difficulties.

Given  a  piecewise expanding $C^{r}$ unimodal map $f$, we shall use the following
notation:
The point $c$ will be called the {\it critical point} of  $f$.
We write $c_k=f^k(c)$ for $k \ge 0$.  
We say that $c$ is {\it preperiodic}
if it is not periodic but there exist $n_0\ge 1$ and $n_1\ge 1$
so that $c_{n_0}$ is periodic of minimal period $n_1$
(we take $n_0$ minimal for this property and our
assumptions imply $n_0\ge 2$).
If $c$ is periodic for $f$ of minimal period $n_1\ge 2$ we set
(by convention) $n_0=1$.
If $c$ is preperiodic or periodic for $f$,  we set
\begin{equation}
N_f:=n_0+n_1-1\ge 2\, .
\end{equation}
(If $c$ is periodic we have $N_f=n_1$.)
If $c$ is neither preperiodic nor periodic for $f$, we set $N_f=\infty$.

Define $J:=(-\infty , f(c)]$ and $\chi:\real \to \{0,1,1/2\}$ by
\begin{equation}\label{defchi}
\chi(x)=0 \mbox{ if } x \notin J\, , \quad \chi(x)=
1 \mbox{ if } x \in \mbox{int}\,  J\, , \quad
\chi(f(c))=\frac{1}{2} \, .
\end{equation}
The two inverse branches of  $f$,
a priori defined on $[f(a),f(c)]$ and $[f(b),f(c)]$,
may be extended to maps $\psi_{+}: J \to \real_-$
and $\psi_{-}:  \to \real_+$ in $C^r(J)$, with $\sup |\psi_{\sigma}'|<1$ for $\sigma=\pm$.
We set
\begin{equation}\label{llambda}
 \lambda_0= \lim_{n \to \infty} (\sup( |(f^{-n})'|))^{1/n}\, ,\quad
\Lambda_0=\lim_{n\to \infty}
(\sup |(f^n)'| )^{1/n} \, .
\end{equation}
and choose
$$
\lambda \in (\lambda_0, 1) \, , \quad
\Lambda > \Lambda_0 \, .
$$

\begin{definition}
Let $r \ge r_0 \ge 2$ be integers.
For  a piecewise expanding $C^{r}$ unimodal map
$f$, a  $C^{r_0,r}$ {\it perturbation} of $f$ is a family of piecewise expanding 
$C^{r}$ unimodal maps $f_t: I \to I$, $|t|<\epsilon$,
with $f_0=f$, and satisfying the following
properties: There exists a neighbourhood $\II_\sigma$ of
$I_\sigma$,   $\sigma=\pm$, so that 
the $C^{r}$ norm of the extension  of $f_t|_{I_\sigma}$
to $\II_\sigma$
is uniformly bounded  for small $|t|$, and so that
\begin{equation}\label{forweak}
\|(f-f_t)|_{\II_\sigma}\|_{C^{r-1}}=O(t)\, .
\end{equation} 
The map
$
(x,t) \mapsto f_t(x)\, , 
$ 
extends to a $C^{r_0}$ function on a neighbourhood
of $(I_+ \cup I_-)\times\{0\}$. 
The {\it infinitesimal deformation} of the perturbation $f_t$ is defined by
\begin{equation}
v= \partial_t f_t|_{t=0}\, .
\end{equation}
\end{definition}

Our assumptions imply that the infinitesimal deformation
satisfies $v(a)=v(b)=0$ and, if $f(c)=b$, also $v(c)=0$.

If  $f_t$ is a $C^{2,2}$
perturbation of   a piecewise expanding $C^{2}$ unimodal map, 
then each $f_t$ (for small
enough $t$)  admits an absolutely
continuous invariant probability measure
(see e.g. \cite{Ba} for references), with a density
$\rho_t$ which is of bounded variation. In fact,
there is only one absolutely continuous invariant
probability measure. Each $\rho_t$ is continuous
on the complement of the at most countable set
$\{ f^k_t(c)\mid k\ge 1\}$,
and it is supported in $[f^2_t(c), f_t(c)]\subset [a,b]$
(we extend it by zero on $\real$). 
If $f$ is good and mixing,
then $f_t$ is mixing and the absolutely continuous invariant measure  is mixing.
(If $f$ is mixing, but not good,  $f_t$ need not
be mixing.) 
In other words, assuming that $f$ is good and mixing
implies that $f$ is stably mixing (we do not claim the converse),
in addition, 
denoting by $| \varphi|_{L^1(Leb)}$  the $L^1(\real,\mbox{Lebesgue})$ norm of 
$\varphi$,
by
\cite[Prop. 7]{KL} (by  uniform Lasota-Yorke estimates, see \cite[Remarks 1, 5]{KL}), 
we  have
\begin{equation}\label{claim'}
 |\rho_t - \rho_0|_{L^1(Leb)} = 0(t \ln |t|) \, .
\end{equation}
If  $f$
is not good, the function $t \mapsto\rho_t$ need not be continuous.
(This is germane to the   fact that
mixing is not necessarily preserved if $f$ is not good. See \cite{Ke} for an illuminating multimodal example.)
See also Remark~\ref{needgood}.

\begin{remark}
Note that Ruelle's conjecture offers a candidate for the derivative
of 
\begin{equation}
\RR(t)=\int \varphi \, \rho_t \, dx
\end{equation} 
only if $\partial_t f_t|_{t=0}=X\circ f$.
(See also Remark~\ref{noXf}.)
\end{remark}

\begin{definition}
For  integers $r \ge r_0 \ge 2$, and a piecewise expanding $C^{r}$ unimodal map
$f$, a {\it $C^{r_0,r}$ perturbation of $f$ tangent to the topological class of $f$}
is a $C^{r_0, r}$ perturbation $f_t$ of $f$ so that
there exist a $C^{2,2}$ perturbation
$\tilde f_t$ of $f$  with 
$$\sup_x|\tilde f_t(x)-f_t(x)|=O(t^2)$$ 
and
homeomorphisms $h_t$  with $h_t(c)=c$ and
$\tilde f_t=h_t \circ f \circ h_t^{-1}$. 
\end{definition}

Clearly,
if $f_t$ is a $C^{2,2}$ perturbation of $f$ tangent to the topological class of $f$,
then $v=\partial_t f_t|_{t=0}=\partial_t \tilde f_t|_{t=0}$.
We shall see (Corollary~\ref{horiz}) that the infinitesimal deformations
$v$ of tangent perturbations are {\it horizontal} for $f$:

\begin{definition}
A continuous  $v : I \to \real$ is  {\it horizontal} 
\footnote{See \cite{Ly}, \cite{ALM} and references therein for a motivation of this
terminology.}
for a piecewise expanding $ C^1$ unimodal map $f$ if,
setting $M_f=n_1$ if $c$ is periodic of minimal period $n_1\ge 2$,
and $M_f=+\infty$ otherwise,
\begin{equation}\label{Mf}
 \sum_{j = 0}^{M_f-1} \frac{v(c_j)}{(f^{j})'(c_1)}=0 \, .
\end{equation}
\end{definition}

See also Subsection~\ref{families} for a discussion 
of perturbations  $f_t$ tangent to the topological class
of $f$. 

When considering  $C^{2,2}$ perturbations $f_t$, we have in particular
$\sup_x|f'_t(x)-f'(x)|=o(1)$ (considering the extensions
to neighbourhoods of $I_\sigma$) and
we shall implicitly restrict to $\epsilon$ small enough so that
\begin{align}\label{llambda2}
&\sup_{|t|< \epsilon}  \lim_{n \to \infty} (\sup( |(f_t^{-n})'|))^{1/n}
< \lambda \, ,\quad
\sup_{|t|< \epsilon} \lim_{n \to \infty} (\sup( |(\tilde f_t^{-n})'|))^{1/n}
< \lambda \, ,\\
\nonumber & \sup_{|t|< \epsilon} \lim_{n\to \infty}
(\sup |(f_t^n)'| )^{1/n}  < \Lambda\, ,
\quad \sup_{|t|< \epsilon} \lim_{n\to \infty}
(\sup |(\tilde f_t^n)'| )^{1/n}  < \Lambda\, .
\end{align}

\subsection{The twisted cohomological equation, the smooth motions
$h_t(x)$, and the infinitesimal conjugacy $\alpha$}

In this section, we discuss  the following
{\it twisted cohomological equation}
(TCE, see e.g. \cite{Sm})  for piecewise expanding  unimodal $f$ and bounded $v$:
\begin{equation}\label{tceeq}
v(x)=\alpha(f(x))- f'(x) \alpha(x)\, , \quad \forall x \in I \, , x\ne c\, .
\end{equation}

Let us start with an easy lemma:

\begin{lemma}\label{easyl}
Assume that $f$ is a piecewise expanding $C^1$ 
unimodal map  
and that $v$ is a bounded function on $I$.
Then for every $\omega\in \real$ the unique bounded
solution $\alpha_{(\omega)}$ to (\ref{tceeq}) which satisfies $\alpha_{(\omega)}(c)=\omega$
is given by:
\begin{equation}\label{aalpha}
\alpha_{(\omega)}(x)=
\begin{cases}
 - \sum_{j= 0}^\infty \frac{ v(f^{j}(x))}{(f^{j+1})'(x)}\, ,&
 \mbox{if } f^j(x)\ne c \, , \forall j \ge 0 \, ,\\
\frac{\omega}{(f^{\ell})'(x) } - \sum_{j= 0}^{\ell-1}\frac{ v(f^{j}(x))}{(f^{j+1})'(x)}&\mbox{if } \exists \ell \ge 1
\mbox{ s.t. } f^{\ell}(x)=c \, .
 \end{cases}
\end{equation}
\end{lemma}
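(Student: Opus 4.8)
The plan is to verify directly that the two formulas given for $\alpha_{(\omega)}$ make sense (the series converge) and solve the twisted cohomological equation, and then to prove uniqueness among bounded solutions. First I would establish convergence: since $f$ is piecewise expanding, $\inf|f'|_{I_\sigma}|>1$, so the chain-rule product $(f^{j+1})'(x)=\prod_{i=0}^{j}f'(f^i(x))$ grows at least geometrically in $j$, uniformly in $x$; combined with the boundedness of $v$ this shows $\sum_{j\ge 0} v(f^j(x))/(f^{j+1})'(x)$ converges absolutely and uniformly, so $\alpha_{(\omega)}$ is a well-defined bounded function in the first case (and trivially in the second, which is a finite sum). One must check the two cases are consistent and that the definition does not depend on spurious choices: in the second case $\ell$ is unique because $c$ is not in the forward orbit of $c$ under the hypotheses ensuring $f$ is unimodal with the stated monotonicity — more precisely, if $f^\ell(x)=c$ then the branch structure forces $\ell$ minimal with this property to be the only relevant index, and any later return would pass through $c$ again, which is handled by reading $\alpha_{(\omega)}(c)=\omega$ as the ``boundary value''.

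Next I would check that (\ref{aalpha}) solves (\ref{tceeq}). This is a routine manipulation: in the generic case, writing $y=f(x)$ (assuming $x\ne c$ and $f^j(x)\ne c$ for all $j$, hence also $f^j(y)\ne c$ for all $j\ge 0$),
\[
\alpha_{(\omega)}(f(x))=-\sum_{j\ge 0}\frac{v(f^{j+1}(x))}{(f^{j+1})'(f(x))}
=-\sum_{j\ge 0}\frac{f'(x)\,v(f^{j+1}(x))}{(f^{j+2})'(x)},
\]
using $(f^{j+1})'(f(x))\,f'(x)=(f^{j+2})'(x)$. Reindexing $j\mapsto j-1$ gives
$\alpha_{(\omega)}(f(x))=-f'(x)\sum_{j\ge 1}v(f^{j}(x))/(f^{j+1})'(x)
= f'(x)\bigl(-\sum_{j\ge 0}v(f^j(x))/(f^{j+1})'(x)\bigr)+f'(x)\cdot v(x)/f'(x)
= f'(x)\alpha_{(\omega)}(x)+v(x)$, which is exactly (\ref{tceeq}). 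The case where $x$ eventually hits $c$ is similar and finite, with the $\omega/(f^\ell)'(x)$ term shifting correctly under $x\mapsto f(x)$ (decreasing $\ell$ by one), and the terminal step at $\ell=1$ uses $\alpha_{(\omega)}(c)=\omega$; one also checks the case $f(x)=c$ directly, where the equation reads $v(x)=\omega-f'(x)\alpha_{(\omega)}(x)$.

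For uniqueness, suppose $\alpha_1,\alpha_2$ are two bounded solutions with the same value $\omega$ at $c$. Then $\beta=\alpha_1-\alpha_2$ is bounded, satisfies $\beta(f(x))=f'(x)\beta(x)$ for $x\ne c$, and $\beta(c)=0$. Iterating, for any $x$ with $f^i(x)\ne c$ for $0\le i\le n-1$ we get $\beta(f^n(x))=(f^n)'(x)\beta(x)$, so $|\beta(x)|=|\beta(f^n(x))|/|(f^n)'(x)|\le \|\beta\|_\infty/|(f^n)'(x)|\to 0$ as $n\to\infty$ by expansion; hence $\beta(x)=0$ for every $x$ whose forward orbit avoids $c$. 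For $x$ with $f^\ell(x)=c$, we get $\beta(x)=\beta(c)/(f^\ell)'(x)=0$. So $\beta\equiv 0$.

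\textbf{Main obstacle.} The genuinely delicate point is the bookkeeping around the critical point $c$: one must be careful that the piecewise-defined right-hand side of (\ref{aalpha}) is internally consistent, that ``$\exists\ell\ge1$ with $f^\ell(x)=c$'' is handled with the correct (minimal) $\ell$, and that the solution-checking goes through at the boundary points $x$ with $f(x)=c$ and at $x=c$ itself, where $f'(c)$ is not defined but the equation is only required for $x\ne c$. The convergence estimate and the algebraic verification are otherwise entirely routine given $\inf|f'|>1$ and boundedness of $v$.
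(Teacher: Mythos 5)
Your proof is correct and follows essentially the same route as the paper's: both iterate the twisted cohomological equation and use the uniform expansion $|(f^k)'|\to\infty$ together with boundedness to pin down the solution. The only cosmetic difference is that you separate out existence (direct verification that the series solves the equation) and uniqueness (via the homogeneous equation for the difference $\alpha_1-\alpha_2$), whereas the paper combines both into a single argument showing that any bounded solution must coincide with the partial sums in the limit; the underlying mechanism is identical.
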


\begin{remark}
If (\ref{tceeq}) admits a continuous solution $\alpha$,
it is easy to see by taking
limits as $x\to c$ from the
left and from the right that $\alpha(c)=0$ and $v(c)=\alpha(c_1)$.
(In particular, there is at most one continuous solution
to (\ref{tceeq}).)
We shall not use this.
\end{remark}

\begin{proof}
For $x$ so that $f^\ell (x)\ne c$ for all $\ell \ge 0$ 
(\ref{aalpha}) defines a bounded solution uniquely on this set:
Indeed any bounded solution satisfies $\beta=-v/f'- \ldots -
v\circ f^{k-1}/(f^k)'+ \beta \circ f^{k+1}/(f^k)'$;
if $\beta(x)\ne \alpha_{(\omega)}(x)$, then we take $k$ so that
$K/(f^k)' < (\beta(x)-\alpha_{(\omega)}(x))/3$ with $K =\max( \sup| \beta|, 
\sup| \alpha_{(\omega)}|)$,
and we get a contradiction.
If $\beta(c)=\omega$, then for each $x$ so that $f^{\ell}(x)=c$ we
must have $\beta(x)=\alpha_{(\omega)}(x)$ as defined in (\ref{aalpha}).
\end{proof}

When $v$ is the infinitesimal
deformation of a perturbation $f_t$ tangent
to the topological class of $f$ 
we shall  relate solutions to (\ref{tceeq}) to the conjugacies  $h_t$.
The key ingredient for this
is the following  information about the smoothness
of $t\mapsto h_t$:

\begin{proposition}\label{TCE}
Let $r_0\ge 2$ be an integer.
Assume that $\tilde f_t$ is a $C^{r_0,r_0}$ perturbation
of a  piecewise expanding $C^{r_0}$
unimodal  map $f$, so that for each small $t$ there
exists a homeomorphism $h_t$ with $h_t(c)=c$ and
$\tilde f_t = h_t \circ f \circ h_t^{-1}$. 
Then for  small enough $\epsilon$, the map
$(t,x)\mapsto h_t(x)$ is continuous from
$(-\epsilon,\epsilon)\times I\to \real$ and
the maps $t \mapsto h_t(x)$ 
are $C^{r_0-1+Lip}$ on $[-\epsilon, \epsilon]$, uniformly in $x\in  I$.
(I.e. $\sup_x \| h_{\cdot}(x)\|_{ C^{r_0-1+Lip}([-\epsilon, \epsilon])} < \infty$.)
\end{proposition}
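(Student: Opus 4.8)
The plan is to first identify the conjugacy $h_t$ explicitly on the postcritical orbit of $f$ via the dynamical relation $h_t \circ f = \tilde f_t \circ h_t$, and then extend to all of $I$ by a bounded-distortion / telescoping argument. Since $h_t(c)=c$ and $h_t$ conjugates $f$ to $\tilde f_t$, we get $h_t(c_k) = h_t(f^k(c)) = \tilde f_t^k(h_t(c)) = \tilde f_t^k(c)$ for all $k\ge 0$. The map $(t,x)\mapsto \tilde f_t(x)$ is $C^{r_0}$ on a neighbourhood of $(I_+\cup I_-)\times\{0\}$, so each $\tilde f_t^k(c)$ is a $C^{r_0}$ function of $t$ as long as the orbit stays off the critical point; in general one only has as much regularity as the perturbation allows, which is why the conclusion is $C^{r_0-1+Lip}$ rather than $C^{r_0}$. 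So the first step is: on the (at most countable) postcritical set, $t\mapsto h_t(c_k)$ inherits the regularity of $t\mapsto \tilde f_t^k$.

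Next I would propagate this to arbitrary $x\in I$. The idea is the standard one for conjugacies between expanding maps: given $x$, pull back by inverse branches. Write $h_t(x)$ as a limit using the inverse-branch structure. Concretely, if $x$ has a full backward orbit avoiding $c$, one has for each $n$ that $h_t(x) = \tilde\psi_{t,w_1}\circ\cdots\circ\tilde\psi_{t,w_n}(h_t(f^n(x)))$ where $\tilde\psi_{t,\sigma}$ are the inverse branches of $\tilde f_t$ and $w=(w_1,\dots)$ is the itinerary of $x$ (which is the \emph{same} itinerary for $f$ and $\tilde f_t$ since they are topologically conjugate via an orientation-preserving $h_t$ fixing $c$). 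Because $\sup|\tilde\psi_{t,\sigma}'|<\lambda<1$ uniformly in small $t$ (using (\ref{llambda2})), this composition contracts at rate $\lambda^n$, so the tail does not matter and one gets uniform convergence in $x$; moreover each finite composition is $C^{r_0}$ in $t$ with derivatives bounded uniformly in $x$ and $n$ (by the chain rule, using the uniform $C^{r_0}$ bounds on $\tilde f_t|_{\II_\sigma}$ and the uniform contraction to control the product of derivative factors — a Faà di Bruno / bounded-distortion estimate). Hence $t\mapsto h_t(x)$ is a uniform-in-$x$ limit of maps bounded in $C^{r_0-1+Lip}([-\epsilon,\epsilon])$, and one closes via the fact that a bounded sequence in $C^{r_0-1+Lip}$ converging uniformly has a limit in $C^{r_0-1+Lip}$ with controlled norm (Arzelà–Ascoli on the $(r_0-1)$-st derivatives plus equi-Lipschitz bounds). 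Joint continuity of $(t,x)\mapsto h_t(x)$ follows from this uniform convergence together with continuity of each approximant.

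The main obstacle is the loss of one derivative and the appearance of the critical point. Unlike the smooth expanding case, the inverse branches of $\tilde f_t$ are only defined on $J=(-\infty,f(c)]$ and the dynamics folds at $c$; points whose forward orbit hits $c$, and the point $c$ itself, need separate handling, and near $c$ the branch structure of $\tilde f_t$ varies with $t$. One has to check that the itinerary coding is genuinely $t$-independent (this uses that $h_t$ is an orientation-preserving homeomorphism with $h_t(c)=c$, so it respects $I_+,I_-$) and that the countably many "exceptional" orbits hitting $c$ are handled by the explicit formula $h_t(c_k)=\tilde f_t^k(c)$ from the first step, patched continuously with the generic formula. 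The loss from $C^{r_0}$ to $C^{r_0-1+Lip}$ is exactly the cost of differentiating the composition $x\mapsto \tilde f_t^k(c)$ in $t$ one extra time relative to the available joint regularity $C^{r_0}$ of $(t,x)\mapsto\tilde f_t(x)$: the $k$-th iterate is $C^{r_0}$ in $t$ for fixed generic starting point, but the derivative estimates that survive the passage to the limit (uniformly in $n$) degrade by one order, leaving Lipschitz control on the top derivative. I would make this quantitative by writing a recursion for $\partial_t^j h_t$ along inverse-branch compositions and estimating it by induction on $j\le r_0-1$, with the $j=r_0$ bound replaced by a Lipschitz estimate.
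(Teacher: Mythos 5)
Your proposal is correct in outline and its core mechanism is the same as the paper's: a backward recursion for the $t$-derivatives of $h_t$, whose coefficients contract at rate $\sup|\tilde f_t'|^{-1}<1$, giving uniform $C^{r_0}$ bounds in $t$ that survive the passage to the limit only as $C^{r_0-1+Lip}$ (Arzel\`a--Ascoli on the $(r_0-1)$-st derivative). The route you take to set up this recursion is genuinely different, though. You work with an arbitrary $x$, write $h_t(x)$ as a limit of $n$-fold compositions of inverse branches $\tilde\psi_{t,w_1}\circ\cdots\circ\tilde\psi_{t,w_n}$ applied to a $t$-independent anchor, and differentiate those finite compositions in $t$. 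The paper instead restricts to the dense set $\mathcal{P}_0$ of eventually periodic points whose forward orbit avoids $c$: on that set the conjugacy equation $h_t\circ f=\tilde f_t\circ h_t$ becomes a nondegenerate finite system, the implicit function theorem gives $t\mapsto h_t(p)\in C^{r_0}$ for free, and the uniform bound on $\partial_t^i h_t$ comes from solving the twisted cohomological equation $TCE_i$ along the (eventually periodic) orbit -- which is exactly your inverse-branch recursion in disguise, but without the need to choose an anchor point or worry about whether a long inverse-branch composition stays inside $J_t$. This is what the periodic-point shortcut buys: your argument has to justify that the compositions are well-defined (the intermediate points can stray slightly outside $J_t=(-\infty,f_t(c)]$, since the anchor is only approximately on the $\tilde f_t$-orbit, so you must use the $C^r$ extensions beyond $J_t$ and say so), and that the anchor choice does not matter for the $t$-derivatives, not just for the values. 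Conversely, your approach avoids invoking the implicit function theorem and gives the formula directly for all $x$ with generic itinerary.

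Two smaller points. First, your account of \emph{why} a derivative is lost is slightly off: the $C^{r_0}$ bounds on the approximants do \emph{not} degrade -- they are uniform in $n$ (and in $x$); the loss occurs only in the limit, where uniform $C^{r_0}$ bounds plus pointwise convergence yield $C^{r_0-1}$ convergence and a $C^{r_0-1+Lip}$ limit, no better. Second, your one-line justification of joint continuity ("uniform convergence plus continuity of each approximant") is too quick: the approximants $x\mapsto g_n(t,x)$ are discontinuous at the preimages $f^{-k}(c)$ ($k\le n$), precisely because the inverse-branch itinerary jumps there. The paper instead proves joint continuity directly via Lemma~\ref{conju} (a quantitative shadowing lemma comparing conjugacies with matching itineraries), which is the clean way to handle points that change itinerary nearby; you would need an equivalent estimate at those exceptional points.
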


\begin{remark}
Although the $h_t(x)$ cannot be called ``holomorphic motions" 
(see e.g. \cite{ALM}) they certainly
be called ``smooth motions"!
Beware that the maps $t\mapsto h_t^{-1}(x)$ are in general
not $C^{1+Lip}$, although it is easy to see
that the map $t\mapsto h_t^{-1}(x)$ is differentiable at $t=0$
with derivative $-\alpha(x)$ for all $x \in  I$.
Also, the maps $x\mapsto h_t(x)$, $x \mapsto h_t^{-1}(x)$ are in
general not absolutely continuous (see Section ~\ref{elegg}).
\end{remark}

It will then be easy to show:

\begin{corollary}\label{horiz}
Under the assumptions of Proposition~\ref{TCE}
the bounded function $\alpha: I\to \real$ defined by
$\alpha(x)=\partial_t h_t(x)|_{t=0}$ satisfies the TCE (\ref{tceeq})
for $v=\partial_t f_t|_{t=0}$.
In addition,   $\alpha$ is continuous, $\alpha(c)=0$ 
and $v(c)-\alpha(c_1)=0$, so that $v$ is  horizontal for $f$.
\end{corollary}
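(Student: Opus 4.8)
The plan is to differentiate the conjugacy relation $\tilde f_t = h_t \circ f \circ h_t^{-1}$ at $t=0$ and read off the TCE, then extract the value $\alpha(c)$ and the horizontality identity from the normalization $h_t(c)=c$. First I would rewrite the conjugacy relation in the form $\tilde f_t \circ h_t = h_t \circ f$, which avoids differentiating the inverse $h_t^{-1}$ (whose $t$-regularity is worse, as the Remark after Proposition~\ref{TCE} warns). Evaluating at a point $x$ with $f^j(x)\neq c$ for all $j\geq 0$, both sides are, by Proposition~\ref{TCE}, Lipschitz in $t$ and differentiable at $t=0$, uniformly in $x$; moreover near such an $x$ the map $f$ is $C^{r_0}$ and $h_0=\mathrm{id}$. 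Differentiating $\tilde f_t(h_t(x)) = h_t(f(x))$ at $t=0$ via the chain rule gives
\begin{equation*}
\partial_t \tilde f_t|_{t=0}(x) + \tilde f_0{}'(x)\,\alpha(x) = \alpha(f(x)),
\end{equation*}
i.e. $v(x) = \alpha(f(x)) - f'(x)\alpha(x)$, since $\tilde f_0 = f$ and $\partial_t\tilde f_t|_{t=0} = v$ (recall $|\tilde f_t - f_t| = O(t^2)$, so the two perturbations have the same infinitesimal deformation). This establishes the TCE~(\ref{tceeq}) on the forward-grand-orbit complement of $c$; boundedness of $\alpha$ is immediate from Proposition~\ref{TCE}, and continuity of $\alpha$ in $x$ follows either from the explicit formula~(\ref{aalpha}) (the series converges uniformly because $v$ is bounded and $|(f^{j+1})'| \to \infty$ uniformly by the expansion hypothesis) or from the uniform-in-$x$ $C^{r_0-1+\mathrm{Lip}}$ control together with continuity of $(t,x)\mapsto h_t(x)$.

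Next I would pin down $\alpha(c)$. Since $h_t(c)=c$ for all small $t$, differentiating in $t$ at $t=0$ gives $\alpha(c)=\partial_t h_t(c)|_{t=0}=0$. Then, taking the limit $x\to c$ in the TCE (using continuity of $\alpha$ and of $v$, and the fact that $\alpha$ is the continuous solution singled out by Lemma~\ref{easyl} with $\omega=0$), we get $v(c) = \alpha(f(c)) - f'(c)\alpha(c)$; but the relevant term here is the one-sided limit, and since $\alpha(c)=0$ this reads $v(c) = \alpha(c_1)$, i.e. $v(c)-\alpha(c_1)=0$. (One should note $f'(c)$ need not be defined, but $\alpha(c)=0$ kills that term; alternatively one works with the one-sided derivatives $f'|_{I_\pm}(c)$, which are finite, and both one-sided limits of the TCE give the same conclusion.)

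Finally, horizontality. With $\alpha(c)=0$, evaluate the explicit solution~(\ref{aalpha}): in the non-periodic case, applying~(\ref{aalpha}) at $x=c_1=f(c)$ (assuming $c$ is not periodic, so $f^j(c_1)\neq c$ for all $j$) gives
\begin{equation*}
\alpha(c_1) = -\sum_{j=0}^\infty \frac{v(f^j(c_1))}{(f^{j+1})'(c_1)} = -\sum_{j=0}^\infty \frac{v(c_{j+1})}{(f^{j+1})'(c_1)},
\end{equation*}
so that $v(c) = \alpha(c_1)$ becomes $v(c) + \sum_{j=1}^\infty v(c_j)/(f^j)'(c_1) = 0$, i.e. $\sum_{j=0}^{\infty} v(c_j)/(f^j)'(c_1)=0$ with the convention $(f^0)'=1$ — this is exactly~(\ref{Mf}) with $M_f=+\infty$. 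In the periodic case ($c$ of minimal period $n_1$), one uses instead the second branch of~(\ref{aalpha}) at $x=c_1$ with $\ell=n_1$ and $\omega=\alpha(c)=0$, giving $\alpha(c_1)=-\sum_{j=0}^{n_1-1} v(c_{j+1})/(f^{j+1})'(c_1)$, and $v(c)=\alpha(c_1)$ rearranges to $\sum_{j=0}^{n_1-1} v(c_j)/(f^j)'(c_1)=0$, which is~(\ref{Mf}) with $M_f=n_1$ (note $(f^{n_1})'(c_1)=(f^{n_1})'(c)$ acts on the periodic orbit). The preperiodic case is handled the same way by splitting the orbit.

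The main obstacle I anticipate is the justification of termwise differentiation in $t$ of the composed relation at $t=0$: one needs that $h_t(x)$ stays, for small $t$, inside the neighborhood of the chosen non-precritical $x$ on which $\tilde f_t$ is jointly $C^{r_0}$ in $(t,x)$, and one needs the chain rule to apply despite $f$ being only piecewise smooth — this is fine precisely because we evaluate away from $c$ and its grand orbit, where everything is genuinely smooth, and Proposition~\ref{TCE} supplies the uniform Lipschitz-in-$t$ bounds needed to pass to the limit. The extension of the identity $v(c)-\alpha(c_1)=0$ from the dense orbit-complement to the point $c$ is then a routine continuity argument, which is why the statement only asserts horizontality (a single scalar identity) rather than, say, differentiability of $h_t^{-1}$ in $x$.
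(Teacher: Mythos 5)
Your derivation of the TCE by differentiating $\tilde f_t\circ h_t = h_t\circ f$ at $t=0$, and the observation that $h_t(c)\equiv c$ forces $\alpha(c)=0$, match the paper. For the identity $v(c)=\alpha(c_1)$ you take a genuinely different route: you first establish continuity of $\alpha$ by an Arzel\`a--Ascoli argument from the uniform $C^{r_0-1+\mathrm{Lip}}$ bound in Proposition~\ref{TCE}, and then pass to the one-sided limit $x\to c$ in the TCE. The paper instead evaluates the conjugacy relation at $x=c$ directly and splits
$\tilde f_t(h_t(c))-f(c)=\bigl(\tilde f_t(h_t(c))-\tilde f_t(c)\bigr)+\bigl(\tilde f_t(c)-f(c)\bigr)$;
since $h_t(c)=c$ the first bracket vanishes identically, and the $t$-derivative of the second is $v(c)$. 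That computation sidesteps both the chain rule at the nondifferentiable point $c$ and the need to invoke continuity of $\alpha$, so it is slightly more economical, but your route is equally valid and has the side benefit of making explicit the Arzel\`a--Ascoli argument behind the continuity assertion (which the paper leaves implicit).

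Two small inaccuracies worth flagging. (i) You suggest continuity of $\alpha$ could alternatively be read off from the explicit formula~(\ref{aalpha}); but $\alpha_{(0)}$ defined by~(\ref{aalpha}) is not a priori continuous at precritical points unless $v$ is already horizontal, so that route would be circular. Only the Arzel\`a--Ascoli argument (or the paper's direct computation) is legitimate here. (ii) In the periodic case you apply the second branch of~(\ref{aalpha}) at $x=c_1$ with $\ell=n_1$; the correct exponent is $\ell=n_1-1$, since $f^{n_1-1}(c_1)=c_{n_1}=c$. With $\omega=0$ this gives $\alpha(c_1)=-\sum_{j=0}^{n_1-2}v(c_{j+1})/(f^{j+1})'(c_1)=-\sum_{k=1}^{n_1-1}v(c_k)/(f^{k})'(c_1)$, and then $v(c)=\alpha(c_1)$ yields exactly $\sum_{k=0}^{n_1-1}v(c_k)/(f^{k})'(c_1)=0$, which is~(\ref{Mf}) with $M_f=n_1$; your index $\ell=n_1$ would introduce a spurious extra term $v(c)/(f^{n_1})'(c_1)$. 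Neither slip affects the overall correctness of the argument.
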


\begin{definition}
Under the assumptions of Proposition ~\ref{TCE},
the function $\alpha=\partial_t h_t|_{t=0}$ is the {\it infinitesimal conjugacy}
associated to the infinitesimal deformation $v$
of $f_t$.
\end{definition}

\begin{remark}
It follows from Corollary~\ref{horiz} that if 
$f_t$ is a perturbation of
$f$ and $v=\partial_tf_t|_{t=0}$ is not horizontal for $f$, then there exist
arbitrarily small $t$ so that
$f$ and $f_t$ are {\it not} topologically conjugated, in particular  $f$ is
not structurally stable.
See \cite{av} for an analogous statement about rational maps.
\end{remark}

\begin{proof}[Proof of Proposition~\ref{TCE}]
To simplify notation, we assume that $c=0$ in this proof.
Let $\mathcal{P}_t$ be the set of points which are either periodic or eventually periodic 
for  $\tilde f_t$, and whose forward orbit under $\tilde f_t$
does not contain the turning point $c$.  
It is easy to see that $\mathcal{P}_t$ is dense in $I$. 
Let $\theta=\sup_{x,t} |\tilde f'_t(x)|^{-1}$.
We first prove that 
$(t,x)\rightarrow h_t(x)$
is continuous. Fix $(x_0,t_0)$ and let $\kappa >0$. 
Pick  $n \in \mathbb{N}$ and $\delta > 0$ such that 
$\theta^n + \frac{\delta}{1-\theta}< \kappa
$.
Choose $\eta_0 < \epsilon/2$ small enough such that if $|t-t_0|< \eta_0$ then
$$\sup_x |\tilde f_{t}(x) -\tilde f_{t_0}(x)| < \delta\, ,$$
and let   $\eta_1$ be such that  $|x-x_0|< \eta_1$ implies  
$f^k(x)\cdot  f^k(x_0)\geq 0$,
for every $k\geq n$.  So
$\tilde f_t^k(h_t(x))\cdot  \tilde f_{t}^k(h_t(x_0))\geq 0$,
for every $k\geq n$ and $t$. Of course
$\tilde f_t^k(h_t(x_0))\cdot \tilde  f_{t_0}^k(h_{t_0}(x_0))\geq 0$.
By Lemma~\ref{conju}, for every 
$(t,x) \in \{ |t-t_0| <  \eta_0\} \times \ \{ |x-x_0|< \eta_1  \}$
we have 
$$
|h(t,x)- h(t_0,x_0)|\leq \kappa \, .
$$

In the remainder of this proof,  $\partial_t^i h_t$ 
denotes $\partial^i_s h_s|_{s=t}$. 
The implicit function theorem tells us that if $p \in \mathcal{P}_0$ then  
$t\rightarrow h_t(p)$ is a $C^{r_0}$  function. Differentiating the equation
$h_t\circ f(p)=\tilde  f_t\circ h_t(p)$
with respect to $t$ we obtain
\begin{equation}\label{tce1} \partial_t h_t\circ f(p) = \partial_t \tilde f_t\circ h_t (p) +
 \tilde f'_t(h_t(p))\partial h_t(p) \, .
 \end{equation}
In other words
\begin{equation*}\partial_t h_t\circ f(p) -\tilde f'_t(h_t(p))\partial h_t(p)= 
\partial_t \tilde f_t\circ h_t (p)= F_1(p) \, .
\end{equation*}
Next, differentiating   (\ref{tce1}) $r_0$  times,  
we can easily prove  that for each $i \leq r_0$ 
\begin{equation}\label{tce5}
\partial_t^i h_t\circ f(p) -\tilde f'_t(h_t(p))\partial_t^{i} h_t(p)= F_i(p)\, , 
\end{equation}
where the function $F_i$ is a polynomial combination of
compositions of (all)  partial derivatives  of $\tilde f_t(x)$ up to order $i$, 
including mixed ones, with the function $h_t$,  and 
partial derivatives $\partial_t^j h_t$, for $j=1,\dots, i-1$.

For every $q \in \mathcal{P}_t$, we have $q=h_t(p)$, with $p \in \mathcal{P}_0$. Define 
$$\alpha_t^i(q) := \partial_t^i h_t (h^{-1}_t(q)).$$
Define $Q_i(q)=F_i(h^{-1}_t(q))$. From (\ref{tce5}) we obtain the twisted cohomological equation
\begin{equation}\label{tce2} 
Q_i(q)=\alpha_t^i(\tilde f_t(q))- \tilde f'_t(q)\cdot \alpha_t^i(q)\, .
\end{equation}
Let call this equation $TCE_i$.

Note that $F_1$ is bounded on $\mathcal{P}_0$. We claim that 
$$|F_i|_{\infty} < \infty
$$
for every $i\leq r_0$. Indeed, suppose by induction  that 
$F_{\ell}$ and $\partial_t^{\ell-1} h_t$  are bounded functions on $\mathcal{P}_0$, for every $\ell\leq i < r_0$. Then  $Q_i$ is bounded on $\mathcal{P}_t$, and the 
unique solution for $TCE_i$  on $\mathcal{P}_t$  is given by the expression
$$
\alpha_t^i(q)= -\sum_{j=0}^{\infty} \frac{Q_i(\tilde f^j_t(q)) }{(\tilde f_t^{j+1})'(q)} \, .
$$
The uniqueness of the solution follows from the fact that every point in $\mathcal{P}_t$
is eventually periodic. 

In particular
\begin{equation}\label{equicont} 
\sup_{q \in \mathcal{P}_t}  |\alpha_t^i(q) |\leq 
\frac{|Q_i|_\infty}{1-\sup_{x} 
|\tilde f'_t(x)|^{-1}} \, .
\end{equation}
It follows that $\partial_t^i h_t$ is bounded on $\mathcal{P}_0$, and hence $F_i$ is bounded in the same domain. This concludes the inductive argument.

Then from (\ref{equicont}) we have an  upper bound  for $|\partial_t^i  h_t|$,
for  $i \leq r_0$,  which is uniform on $t \in [-\epsilon, \epsilon]$
(up to taking a smaller $\epsilon$). So the family  of functions 
$t\rightarrow h_t(p)$,
with $p \in \mathcal{P}_0$ and $t \in [-\epsilon, \epsilon]$, is  a  bounded subset  
of $C^{r_0}([-\epsilon, \epsilon])$.

We claim that $t\mapsto h_t(x)$ is $C^{r_0-1+Lip}$ for every  $x \in I$. Indeed,
let $p_n \in \mathcal{P}_0$ be a sequence which converges to $x$.  
Of course the  sequence of functions $t\mapsto h_{t}(p_n)$ converges to the
function $t\mapsto h_t(x)$. Since every sequence in a bounded subset of 
$C^{r_0}([-\epsilon,\epsilon])$ has a subsequence 
which converges to a function in $C^{r_0-1+Lip}$, 
we conclude that $t\mapsto h_t(x)$ is $C^{r_0-1+Lip}$.
\end{proof}

\begin{proof}[Proof of Corollary \ref{horiz}]
By differentiating
$\tilde f_t \circ h_t = h_t \circ f$ with respect  to $t$ at $t=0$, we see that
$\alpha(x)$ satisfies (\ref{tceeq}) at all $x \ne c$.
Since $h_t(c)=c$ for all $c$ we have  $\alpha(c)=0$.
To prove $v(c)=\alpha(c_1)$, we use  $\tilde f_t \circ h_t(c) = h_t \circ f(c)$:
The derivative with respect to $t$ of the right-hand-side at $t=0$ is just $\alpha(c_1)$.
This implies that the left-hand-side is differentiable at $t=0$, and,
using $h_t(c)=c$, the derivative is
$$
\lim_{t\to 0}\frac {\tilde f_t(h_t(c))-\tilde f_t(c)}{t}+
\lim_{t\to 0}\frac {\tilde f_t(c)-f(c)}{t}=0+ v(c)\, .
$$
\end{proof}


\subsection{Perturbations $f_t$ tangent to the topological class of $f$}
\label{families}

For $r \ge 2$ and a fixed  piecewise expanding $C^r$
unimodal map $f$, we may  pick  $h_t(x)$ with
$h_t(c)=c$, so
that  $(x,t)\mapsto h_t(x)$ is $C^{r}$,
and define $\tilde f_t := h_t \circ f \circ h_t^{-1}$.
Then $\tilde f_t$ is a $C^{r,r}$ perturbation of $f$ in its topological class.
If we assume in addition that
 $h_t(c+x)=\SS h_t(c-x)$, where the ($C^r$) symmetry
$\SS$ is such that $f(c+x)=f(\SS(c-x))$,
we can ensure that the infinitesimal deformation is of the form
$v=X\circ f$.
Since $x\mapsto h_t(x)$ is a diffeomorphism in this construction,  it gives a conjugacy
between the invariant densities $\tilde \rho_t$ 
of $\tilde f_t$ and $\rho_0$ of $f$. Thus differentiability of 
$
\widetilde \RR(t)
=\int \varphi \tilde \rho_t\, dx
$
can be obtained by relatively easy perturbation
theory arguments on the transfer operator.
Theorem~\ref{formula} applies to {\it all} smooth perturbations $f_t$
which are tangent to  $\tilde f_t$, and we may choose $f_t$
in such a way as to ensure that $f_t$ and $f$ are not
topologically conjugated (by modifying the kneading
invariant), or are not smoothly conjugated
(by acting on the multipliers \cite{MaM}). 

In view of a more general and
systematic description of perturbations tangent
to the topological class, recall that
Corollary ~\ref{horiz} implies that
if a $C^{2,2}$ perturbation $f_t$ of a  $C^{2}$ map $f$ is tangent to the topological 
class of $f$, then its infinitesimal deformation
$v$ is horizontal.
In the
smooth nonuniformly hyperbolic case (see \cite{Ly}, \cite{ALM} and references therein) a converse to this
statement holds.
The proof of the  converse in our setting is given elsewhere:

\begin{theorem}\label{???}(See \cite{??})
For $r_0\ge 2$, let $f$ be a good piecewise expanding $C^{r_0}$
unimodal map and let
$v\in C^{r_0}(I)$  be horizontal for $f$ and satisfy
$v(a)=0$, $v(b)=0$, and, if $f(c)=b$, also $v(c)=0$. Then
there exists a family of piecewise expanding 
$C^{r_0}$ unimodal maps $\tilde f_t: I \to I$, $|t|<\epsilon$,
with $\tilde f_0=f$, so that the map
$
(x,t) \mapsto \tilde f_t(x)\, , 
$ 
extends to a $C^{r_0-1+Lip}$ function on a neighbourhood
of $(I_+ \cup I_-)\times\{0\}$, and, in addition,
$\partial_t \tilde f_t|_{t=0}=v$, and for each $t$
there is a homeomorphism $h_t$ with
$h_t(c)=c$ and  $\tilde f_t=h_t\circ  f \circ h_t^{-1}$.
The conjugacies  $h_t$  are in general not
absolutely continuous.
\end{theorem}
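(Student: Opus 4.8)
The plan is to produce $\tilde f_t$ and its conjugacies $h_t$ from a continuous solution $\alpha$ of the twisted cohomological equation, reversing the argument of Corollary~\ref{horiz}; throughout, $\alpha$ will play the role of the infinitesimal conjugacy $\partial_t h_t|_{t=0}$.

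First I would establish the converse to the Remark following Lemma~\ref{easyl}: if $f$ is good and $v\in C^{r_0}(I)$ is horizontal with $v(a)=v(b)=0$ (and $v(c)=0$ when $f(c)=b$), then the solution $\alpha:=\alpha_{(0)}$ of (\ref{tceeq}) given by Lemma~\ref{easyl} is continuous and of bounded variation on $I$, with $\alpha(c)=0$ and $\alpha(c_1)=v(c)$. The only delicate point is continuity at $c$ and at the preimages of $c$: the one-sided limits at $c$ of $-\sum_j v(f^jx)/(f^{j+1})'(x)$ both equal $-(f'(c^\pm))^{-1}\sum_{j\ge0}v(c_j)/(f^j)'(c_1)$, which vanish precisely by the horizontality relation (\ref{Mf}); when $c$ is (pre)periodic one invokes in addition the good condition ($\inf|(f^{n_1})'|>2$, resp.\ $N_f=\infty$ together with the uniform expansion (\ref{llambda})) to get absolute, uniform convergence. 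This step is the infinitesimal analogue of structural stability of expanding maps.

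Next I would build the path. Working with inverse branches, I would choose $C^{r_0}$ families $\tilde\psi_{\sigma,t}:J\to\real$ ($\sigma=\pm$) of uniform contractions, depending on $t$ in a $C^{r_0-1+Lip}$ fashion, with $\tilde\psi_{\sigma,0}=\psi_\sigma$ and $\partial_t\tilde\psi_{\sigma,t}|_{t=0}=-(v\circ\psi_\sigma)\,\psi_\sigma'$ --- the first-order part dictated by $f\circ\psi_\sigma=\id$ and the requirement $\tilde f_t=f+tv+O(t^2)$. Letting $\underline s=(s_k)_{k\ge1}$ be the kneading sequence of $f$ (the itinerary of $c_1$), set
\begin{equation*}
\tilde c_1(t):=\lim_{n\to\infty}\tilde\psi_{s_1,t}\circ\tilde\psi_{s_2,t}\circ\cdots\circ\tilde\psi_{s_n,t}(f(c))\, ,
\end{equation*}
the limit of a nested family of shrinking intervals, which converges and inherits the $t$-regularity of the $\tilde\psi_{\sigma,t}$. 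After a harmless normalization ensuring $\tilde\psi_{\sigma,t}(\tilde c_1(t))=c$, the $\tilde\psi_{\sigma,t}$ are the inverse branches of a piecewise expanding $C^{r_0}$ unimodal map $\tilde f_t$ with critical value $\tilde c_1(t)$; one must then check (the heart of the matter, see below) that for $|t|$ small $\tilde f_t$ really is a unimodal map --- in particular $\tilde c_1(t)=\max_k\tilde f_t^k(c)$ --- with kneading sequence again $\underline s$, so that $\tilde f_t$ is topologically conjugate to $f$ (using that a good map's kneading sequence is a complete conjugacy invariant, the (pre)periodic case being treated separately). By Lemma~\ref{easyl} and the recursion $\partial_t(\tilde f_t^k(c))|_{t=0}=v(c_{k-1})+f'(c_{k-1})\,\partial_t(\tilde f_t^{k-1}(c))|_{t=0}$ (valid since $\tilde f_t=f+tv+O(t^2)$ away from $c$), preserving $\underline s$ forces the first-order motion of the critical value to be $\partial_t\tilde c_1(t)|_{t=0}=\alpha(c_1)=v(c)$, and hence $\partial_t\tilde f_t|_{t=0}=v$; the joint $C^{r_0-1+Lip}$ regularity of $(x,t)\mapsto\tilde f_t(x)$ follows by differentiating as in the proof of Proposition~\ref{TCE}. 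Finally the conjugacies $h_t$ with $h_t(c)=c$ now exist and are unique by the usual graph-transform/pull-back argument for good maps with equal kneading; Proposition~\ref{TCE} applies and gives their regularity, and $\partial_t h_t|_{t=0}$, being the unique continuous solution of the TCE, equals $\alpha$. The last assertion, that the $h_t$ are in general not absolutely continuous, is then exactly what is discussed after Proposition~\ref{TCE} and in Section~\ref{elegg}.

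The main obstacle is the consistency check: that the critical value $\tilde c_1(t)$ read off from $\underline s$ is indeed the maximum of its own orbit under $\tilde f_t$ for a \emph{full} interval $|t|<\epsilon$, not merely to first order --- equivalently, that horizontality of $v$, a first-order condition, keeps the postcritical orbit from crossing $c$ under all iterates, even when the postcritical orbit of $f$ accumulates at $c$. I expect this to require a uniform shadowing estimate: one monitors the position of $\tilde f_t^k(c)$ relative to $c$ for each $k$ and uses the uniform expansion of $(f^k)'$ along the postcritical orbit, the contraction of the $\tilde\psi_{\sigma,t}$, and the continuity and bounds on $\alpha$ from the first step to make the error terms summable in $k$ uniformly in $t$, the good hypothesis entering to control the (pre)periodic part. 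If $v$ is not horizontal this consistency fails --- $\tilde c_1(t)$ being forced across a postcritical point --- in agreement with Corollary~\ref{horiz}.
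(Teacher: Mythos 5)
The paper does not contain a proof of Theorem~\ref{???}: immediately before the statement the text reads ``The proof of the converse in our setting is given elsewhere,'' and the theorem is tagged ``(See~\cite{??}),'' referring to the companion preprint \emph{Smooth deformations of piecewise expanding unimodal maps}. There is therefore no in-paper proof to compare against, and I can only assess your sketch on its own terms.

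Your Step~1 (continuity of $\alpha_{(0)}$ under horizontality) is correct and is the right starting point: the one-sided limits of $\alpha_{(0)}$ at $c$ are $-\frac{1}{f'(c^\pm)}\sum_{j\ge0}v(c_j)/(f^j)'(c_1)$, which vanish precisely when $v$ is horizontal, and continuity then propagates to all preimages of $c$ by pulling back through the inverse branches. Beyond this, the proposal is an outline rather than a proof, and there are two concrete gaps. First, the construction of $\tilde c_1(t)$ is circular: you define $\tilde c_1(t)$ as the point with itinerary $\underline s$ for the iterated function system $\{\tilde\psi_{\pm,t}\}$, then ``normalize'' the branches so that $\tilde c_1(t)$ becomes the critical value; but the normalization alters the branches and hence alters the itinerary-determined point. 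What is actually needed is a fixed-point (or monotonicity-of-kneading) argument producing, for each small $t$, a critical value at which the geometric constraint $\tilde\psi_{+,t}(\tilde c_1(t))=\tilde\psi_{-,t}(\tilde c_1(t))=c$ and the symbolic constraint agree simultaneously, together with a verification that this value depends on $t$ in a $C^{r_0-1+Lip}$ fashion; none of that is supplied. Second, you correctly flag the uniform control of the postcritical orbit as ``the heart of the matter'' but give no mechanism. When $\inf_k d(c_k,c)=0$, the naive bound on $|\tilde f_t^k(c)-c_k|$ has a second-order remainder of size $O\bigl(t^2\,|(f^k)'(c_1)|\bigr)$ (compare the estimate~(\ref{control}) in the proof of Theorem~\ref{nonlip}), which grows exponentially in $k$ and so overwhelms $d(c_k,c)$ for any fixed $t\ne0$. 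To beat this one must either construct $h_t$ directly, so that $\tilde f_t^k(c)=h_t(c_k)$ holds by design, or else tune the higher-order $t$-dependence of the branch families inductively along the whole postcritical orbit; this is also where the \emph{good} hypothesis does real work in the (pre)periodic case. Your sketch acknowledges the obstacle without overcoming it, so the central analytical content of the theorem remains unproved.
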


In particular, the above implies that
any  $C^{2,r}$ perturbation $f_t$ of a  piecewise
expanding $C^r$ unimodal map $f$ ($r\ge 2$)  so that
$v=\partial_t f_t|_{t=0}$ is horizontal and $v\in C^{2}(I)$ 
is tangent to  the topological class of $f$. 

Note that there exist (many)  $C^{2,r}$ perturbations $f_t$ 
of  mixing piecewise expanding $C^r$ unimodal maps,
and such that $v=\partial_t f_t|_{t=0}$ is $C^r$ and horizontal (also if we require
$v=X\circ f$). 
Indeed, the functional $L_f:v \mapsto
v(c)-\alpha_{(0)}(c_1)$ is bounded and linear from $\{ v \in C^r(I) \}$ to $\real$. So 
it has a codimension-one kernel.

\section{Transfer operators and their spectra}
\label{three}

\subsection{Definitions and previous results}

Recall that a point $x$ 
is called regular  for a function $\phi$ 
if $2\phi(x)=\lim_{y \uparrow x} \phi(y)+\lim_{y\downarrow x}\phi(y)$.
If $\phi_1$ 
and $\phi_2$ are  functions of
bounded variation on $\real$
having at most regular discontinuities, the Leibniz formula
says that $(\phi_1 \phi_2)'=\phi_1'\phi_2+\phi_1 \phi_2'$, where both
sides are a priori finite measures.
(Viewing a function $\phi$ in $BV$ as a measure means considering
$\phi\, dx$.)

\smallskip

For a  piecewise expanding $C^{2}$ unimodal map $f$,
recalling (\ref{defchi}), we  introduce two linear operators:
\begin{equation}\label{L0}
\LL_{0} \varphi(x):=
\chi(x) \varphi(\psi_{+}(x))-\chi(x)  \varphi(\psi_{-}(x))\, ,
\end{equation}
and
\begin{equation}\label{L1}
\LL_{1} \varphi(x):=
\chi(x)
\psi'_{+}(x)\varphi(\psi_{+}(x))
+\chi(x)
|\psi'_{-}(x)| \varphi(\psi_{-}(x))\, .
\end{equation}
Note that $\LL_{1}$
is  the usual (Perron-Frobenius) transfer operator for $f$,
in particular, $\LL_{1} \rho_0=\rho_0$ and $\LL_{1}^*(\mbox{Lebesgue}_\real)
=\mbox{Lebesgue}_\real$.
The operators $\LL_{0}$ and $\LL_{1}$  both act boundedly on 
the Banach space
$$
BV=BV^{(0)}
:=\{ \varphi : \real \to \complex \mid
\var (\varphi) < \infty\, , \supp (\varphi) \subset [a , b]\} / \sim\, ,
$$ 
endowed with the norm $\| \varphi\|_{BV} =\inf _{\phi \sim \varphi} \var (\phi)$,
where $\var$ denotes  total
variation and $\varphi_1 \sim \varphi_2$ if the
bounded functions $\varphi_1$, $\varphi_2$ 
differ on an at most countable set. 
To get finer information on $\LL_0$, we consider the smaller Banach space
(see e.g.  \cite{Ru1})
$$
BV^{(1)}=\{ \varphi : \real \to \complex\mid
\supp(\varphi)\subset (-\infty, b]\, ,
\varphi' \in BV \}\, ,
$$
for the norm
$\|\varphi\|_{BV^{(1)}} =  \|\varphi'\|_{BV}$.
If $\LL$ is a bounded linear operator on a Banach space $\BB$, we denote
the spectrum of $\LL$ by $\sp(\LL)$, and
we define $R_{ess}(\LL)$, the essential spectral radius of $\LL$, to be
\begin{align*}
R_{ess}(\LL)&=\inf \{ R \ge 0 \mid  \sp(\LL)\cap \{ |z| > R\} \\
&\qquad\qquad\qquad\qquad \mbox{ consists of isolated
eigenvalues of finite multiplicity}\} \, .
\end{align*}
Recalling the definition (\ref{llambda}) of
$\lambda_0$, we have the following key lemma (see \cite{Ba}, the claims
on $\LL_1$ on $BV$ are  classical):

\begin{lemma}\label{sp}
Assume that $f$ is a mixing piecewise expanding
$C^2$ unimodal map. The
essential spectral radius of $\LL_{1}$ on $BV$ is $\le \lambda_0$. In addition,
$1$ is a maximal eigenvalue of $\LL_{1}$, which is simple, for the eigenvector
$\rho_0$, and there are no other eigenvalues of $\LL_1$ of modulus $1$ on $BV$.
The spectral radius of $\LL_0$ on $BV$ is equal to $1$.
For any $\varphi \in BV^{(1)}$ 
\begin{equation}\label{keye}
(\LL_{0} \varphi)'=\LL_{1} (\varphi')\, .
\end{equation}
Finally, the spectrum of $\LL_{0}$ on $BV^{(1)}$  and that of
$\LL_{1}$ on $BV$ coincide. 
\end{lemma}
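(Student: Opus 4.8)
The plan is to make the identity (\ref{keye}) the backbone: once it is in hand, all the assertions about $\LL_{0}$ on $BV^{(1)}$ follow from those about $\LL_{1}$ on $BV$ by a conjugation, and the latter are the classical ones. First I would verify (\ref{keye}) by a direct computation. For $\varphi\in BV^{(1)}$ — so $\varphi$ is Lipschitz with $\supp\varphi\subset(-\infty,b]$ — we have $\LL_{0}\varphi(x)=\chi(x)\big(\varphi(\psi_{+}(x))-\varphi(\psi_{-}(x))\big)$; the only discontinuity of $\chi$ sits at $f(c)$, and there the bracket vanishes because $\psi_{+}(f(c))=\psi_{-}(f(c))=c$ and $\varphi$ is continuous, so $\LL_{0}\varphi$ is itself Lipschitz. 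Differentiating with the chain rule and using that $\psi_{-}$ is orientation-reversing (so $-\psi_{-}'=|\psi_{-}'|$) gives $(\LL_{0}\varphi)'(x)=\chi(x)\big(\psi_{+}'(x)\,\varphi'(\psi_{+}(x))+|\psi_{-}'(x)|\,\varphi'(\psi_{-}(x))\big)=\LL_{1}(\varphi')(x)$; along the way one checks $\supp(\LL_{0}\varphi)\subset(-\infty,f(c)]$ and $(\LL_{0}\varphi)'=\LL_{1}(\varphi')\in BV$, so $\LL_{0}$ maps $BV^{(1)}$ into itself.

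Next I would observe that the derivative map $D:\varphi\mapsto\varphi'$ is, by the very definition of the norms, an isometry from $BV^{(1)}$ into $BV$, and that it is onto, its inverse being $\psi\mapsto\big(x\mapsto-\int_x^b\psi(s)\,ds\big)$. The point here is that the support condition defining $BV^{(1)}$ is a half-line condition, not the compact-support condition of $BV$, which is exactly what lets the primitive of an arbitrary $\psi\in BV$ — in particular of $\rho_{0}$, whose integral is $1\ne0$ — land back in $BV^{(1)}$. Hence $D$ is a linear isomorphism, and (\ref{keye}) reads precisely $D\LL_{0}=\LL_{1}D$ on $BV^{(1)}$; therefore $\LL_{0}|_{BV^{(1)}}=D^{-1}\,\LL_{1}|_{BV}\,D$ is conjugate to $\LL_{1}|_{BV}$, which yields at once the coincidence of their full spectra, and hence of their essential spectra and essential spectral radii.

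It then remains to recall the classical facts for $\LL_{1}$ on $BV$ and to treat $\LL_{0}$ on $BV$ separately. For $\LL_{1}$ I would invoke the Lasota--Yorke inequality $\var(\LL_{1}^{n}\varphi)\le C_n\var\varphi+D_n|\varphi|_{L^1(\mathrm{Leb})}$ with $\limsup_n C_n^{1/n}\le\lambda_0$ (which uses $\sup|(f^{-n})'|^{1/n}\to\lambda_0$ and the single turning point), together with compactness of $BV\hookrightarrow L^1$ and Hennion's theorem, to get $R_{ess}(\LL_{1},BV)\le\lambda_0$; and topological mixing of $f$ on $[f^2(c),f(c)]$ makes the absolutely continuous invariant measure $\rho_{0}\,dx$ the unique such measure and mixing, whence $1$ is a simple eigenvalue of $\LL_{1}$ with eigenvector $\rho_{0}$ and there is no other spectrum of modulus $1$. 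For the spectral radius of $\LL_{0}$ on $BV$: the bound $\le1$ comes from an $\LL_{0}$-adapted Lasota--Yorke estimate whose leading constant is $1$ because $\LL_{0}$ carries no Jacobian weight; for the reverse inequality I would exhibit $1$ as an eigenvalue of $\LL_{0}$ on $BV$ by taking the $\LL_{0}$-fixed vector $D^{-1}\rho_{0}\in BV^{(1)}$ and restricting it to $[a,b]$ — since $\LL_{0}\varphi(x)$ depends on $\varphi$ only through its values on $[a,b]$, this restriction is again $\LL_{0}$-fixed, and it is nonzero in $BV/\!\sim$ (it equals a nonzero constant on $[a,f^2(c)]$).

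The soft part is the conjugation; the real work is the two Lasota--Yorke estimates. I expect the main obstacle to be the estimate for $\LL_{0}$ on $BV$ with \emph{trivial} leading constant: the naive variation bound for $\varphi\circ\psi_{+}-\varphi\circ\psi_{-}$ loses a factor $2$ per iterate, so one must exploit the sign cancellation in that difference (or pass to a better auxiliary norm controlled by the dynamics along the postcritical orbit). Since all of this is standard in the area, I would cite \cite{Ba} for the $\LL_{0}$ estimate and the classical references for $\LL_{1}$, and present (\ref{keye}) together with the isomorphism $D$ as the structural heart of the proof.
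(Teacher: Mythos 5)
Your proposal is correct and follows the same route the paper defers to in \cite{Ba}: establish the identity (\ref{keye}), recognize that $\varphi\mapsto\varphi'$ is an isometric isomorphism $BV^{(1)}\to BV$ (the half-line support condition on $BV^{(1)}$ being exactly what lets primitives of arbitrary $\psi\in BV$, including $\rho_0$, land back in $BV^{(1)}$), and thereby reduce the $\LL_0|_{BV^{(1)}}$ statements to the classical ones for $\LL_1|_{BV}$. Your construction of the $\LL_0$-fixed vector in $BV$ by restricting $D^{-1}\rho_0$ to $[a,b]$ is clean and correct, and you rightly identify the Lasota--Yorke estimate for $\LL_0$ on $BV$ with unit leading coefficient as the one genuinely technical ingredient, appropriately referred to \cite{Ba}.
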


For further use, associate to a mixing piecewise expanding
$C^2$ unimodal map $f$
\begin{equation}\label{ttau}
\tau_0= \max \biggl ( \lambda_0\, , \, \, 
\sup \{ |z| \mid z \in \mbox{sp} \, (\LL_1|_{BV})\, , \, \, z\ne 1\} \biggr ) \, ,
\end{equation}
(note that $\tau_0 <1$), and choose
$$
\tau \in (\tau_0, 1) \, .
$$

Set $H_u(x)=-1$ if $x < u$,
$H_u(x)=0$ if $x > u$ and $H_u(u)=-1/2$.
If $f$ is a piecewise expanding $C^2$ unimodal map,
the invariant density of $f$ is of bounded variation
and thus decomposes uniquely \cite{RN} as $\rho_0=\rho_{sal}+\rho_{reg}$ 
with $\rho_{reg}$ continuous
and $\rho_{sal}$ the {\it saltus term}
(recalling $N_f$ from \S~\ref{2.1}):
\begin{equation}\label{dec0}
\rho_{sal} = \sum_{n = 1}^{N_f} s_n H_{c_n}\, ,
\end{equation}
with 
$s_{n}=\lim_{y \downarrow c_{n}} \rho_0(y)-\lim_{x \uparrow c_{n}} \rho_0 (x)$.
By
\cite[Prop. 3.3]{Ba} we have\footnote{The proof there does not require
that $c$ is not periodic.}:

\begin{proposition}\label{decc}
Let  $f$ be a  mixing piecewise expanding $C^3$ unimodal map.
Then $\rho_{reg}$ from the decomposition
(\ref{dec0}) of the invariant density 
is an element of $BV^{(1)}$.
\end{proposition}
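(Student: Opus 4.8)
The plan is to show that $\rho_{reg} \in BV^{(1)}$, i.e.\ that $\rho_{reg}$ is continuous, supported in $(-\infty, b]$, and has derivative of bounded variation. Since $\rho_0 = \LL_1 \rho_0$ and $\LL_1$ preserves $BV$, the starting point is to apply the transfer operator to the decomposition $\rho_0 = \rho_{sal} + \rho_{reg}$ and analyze how $\LL_1$ acts on each summand. The key elementary fact is that $\LL_1$ maps a jump (Heaviside) function $H_u$ to a sum of finitely many jump functions located at $\psi_\pm^{-1}(u) = f(u)$-preimages, plus a continuous (indeed $BV^{(1)}$) piece coming from the smoothness of the weights $\psi'_\pm$; and the jumps of $\rho_0$ sit exactly along the postcritical orbit $c_1, \dots, c_{N_f}$, which is either finite (Markov case) or eventually periodic in a way that is controlled by $N_f$. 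So I would first compute $\LL_1 \rho_{sal}$ explicitly, tracking which jumps are produced; the jump at $c_1 = f(c)$ is the ``new'' one created by the singularity of $\chi$ at $f(c)$ in the definition \eqref{L1}, and the remaining jumps of $\LL_1 \rho_{sal}$ land at points $c_{n}$ for $n \ge 2$, since $f^{-1}(c_n) \ni c_{n-1}$.

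Concretely, I would write $\LL_1 \rho_0 = \LL_1 \rho_{sal} + \LL_1 \rho_{reg}$ and observe that $\LL_1 \rho_{reg}$ is continuous and lies in $BV^{(1)}$: this is because $\rho_{reg}$ is continuous and of bounded variation, $\psi_\pm$ are $C^3$ on $J$ with $\psi_\pm(J) \subset I$, and the only possible discontinuity of $\LL_1 \rho_{reg}$ would come from the characteristic function $\chi$ at $x = f(c)$, but there $\psi_+(f(c)) = \psi_-(f(c)) = c$, so the two contributions combine continuously and in fact one can check the resulting function is in $BV^{(1)}$ using $C^3$ smoothness of the branches (this is essentially \eqref{keye} applied carefully, or a direct estimate). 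Then, matching the saltus parts of both sides of $\rho_{sal} + \rho_{reg} = \LL_1 \rho_{sal} + \LL_1 \rho_{reg}$, I get that $\rho_{sal}$ equals the saltus part of $\LL_1 \rho_{sal}$ — a finite linear recursion on the coefficients $s_n$ — and consequently $\rho_{reg} = (\LL_1 \rho_{reg}) + (\text{continuous part of } \LL_1 \rho_{sal})$. So it remains to show the continuous part of $\LL_1 \rho_{sal}$, call it $g$, lies in $BV^{(1)}$.

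For the $BV^{(1)}$ regularity of $g$: each $H_{c_n}$ contributes to $\LL_1 \rho_{sal}$ a term $\chi(x)\,\psi'_+(x) H_{c_n}(\psi_+(x)) + \chi(x)\,|\psi'_-(x)| H_{c_n}(\psi_-(x))$; subtracting off the pure jump part $s'_m H_{c_m}$ at the relevant preimage points $c_m$, the remainder is a function that is piecewise $C^2$ with the jump removed, hence its derivative is piecewise $C^1$ (bounded variation) away from $c_1$, and at $c_1$ the only issue is again the $\chi$-cutoff which, as above, produces a matched (continuous, $BV$-derivative) contribution because $\psi_\pm$ agree at $f(c)$ to first order in the relevant sense. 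Summing the finitely many contributions (if $c$ is Markov) or the geometrically-convergent infinitely many (using \eqref{llambda}, $\sup|\psi'_\sigma| < 1$, so the tail of jump-images past $c_{N_f}$ along the periodic cycle is summable in $BV^{(1)}$ norm) gives $g \in BV^{(1)}$. Then $\rho_{reg} = g + \LL_1\rho_{reg}$, and iterating, $\rho_{reg} = \sum_{k\ge 0}\LL_1^k g$; since $\LL_1$ acts on $BV^{(1)}$ (via \eqref{keye}, $\LL_1$ on $BV^{(1)}$ is conjugate to $\LL_0$ on... wait, rather $(\LL_0\varphi)' = \LL_1(\varphi')$, and more directly $\LL_1$ itself is bounded on $BV^{(1)}$ with spectral radius controlled by $\tau_0 < 1$ on the complement of the eigenvalue $1$) and $g$ is orthogonal to the top eigendirection (as $\int g\,dx$ matches up so that $\sum \LL_1^k g$ converges — here one uses that the fixed point equation forces the relevant projection to vanish), the series converges in $BV^{(1)}$.

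The main obstacle I anticipate is the careful bookkeeping at the point $c_1 = f(c)$, where the cutoff $\chi$ is genuinely discontinuous: one must verify that the $\psi_+$ and $\psi_-$ contributions to $\LL_1$ of a continuous (resp.\ $BV^{(1)}$) function recombine to something continuous (resp.\ $BV^{(1)}$), which hinges on $\psi_+(f(c)) = \psi_-(f(c)) = c$ together with enough smoothness ($C^3$, hence the hypothesis in the Proposition) of the inverse branches near $f(c)$ so that the matching holds not just in value but with the first derivative of bounded variation. The other delicate point is ensuring convergence of the (possibly infinite) series defining $g$ and $\rho_{reg}$ in the $BV^{(1)}$ norm rather than merely in $BV$; here the contraction $\sup|\psi'_\sigma| < 1$ of the branches, applied to the orbit structure encoded by $N_f$, together with the spectral gap $\tau_0 < 1$ from Lemma~\ref{sp}, does the job, but the estimate must be made uniform over the geometric sum. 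Everything else is a routine consequence of Lemma~\ref{sp} and the explicit formulas \eqref{L1}, \eqref{keye}, and \eqref{dec0}.
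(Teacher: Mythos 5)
The paper does not reprove this Proposition: it cites \cite[Prop.\ 3.3]{Ba} and adds only a footnote, so there is no internal argument to compare with. Judged on its own, your proposal has a fatal gap. You claim $\LL_1\rho_{reg}$ is continuous at $c_1 = f(c)$ because $\psi_+(f(c))=\psi_-(f(c))=c$, so ``the two contributions combine continuously.'' That reasoning is valid for $\LL_0$ from \eqref{L0}, where the branch contributions carry opposite signs and $\varphi(\psi_+(x))-\varphi(\psi_-(x))\to\varphi(c)-\varphi(c)=0$ as $x\uparrow f(c)$. But in $\LL_1$ from \eqref{L1} both contributions carry a plus sign: as $x\uparrow c_1$, $\LL_1\varphi(x)\to\bigl(|f'(c_-)|^{-1}+|f'(c_+)|^{-1}\bigr)\varphi(c)$, while for $x>c_1$ the cutoff $\chi$ gives $0$. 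Hence $\LL_1\varphi$ has a genuine jump at $c_1$ whenever $\varphi(c)\ne 0$ --- and there is no reason that $\rho_{reg}(c)=0$. This jump is precisely what the explicit $H_{c_1}$ correction in \eqref{rpart} is cancelling. The same error recurs in your treatment of $\LL_1\rho_{sal}$, and your appeal to \eqref{keye} is misplaced: that identity shows $\LL_0$ preserves $BV^{(1)}$, not that $\LL_1$ does (which it manifestly does not, by the jump just described).

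Because of this, the matching equation $\rho_{reg}=g+\LL_1\rho_{reg}$ is incorrect. The correct identity is $\rho_{reg}=(\Pi_{reg}\circ\LL_1)\rho_{reg}+(\Pi_{reg}\circ\LL_1)\rho_{sal}$, where $\Pi_{reg}$ discards the saltus part, and $\Pi_{reg}\circ\LL_1$ is not $\LL_1$. Its spectral behaviour on spaces of continuous or Lipschitz functions --- which is exactly what \S\ref{3.2} addresses via the compact correction $\KK_1$ and a Lasota--Yorke bound on the remainder --- must be established before any Neumann-series argument can run, and the spectral gap of $\LL_1$ on $BV$ or $BV^{(1)}$ does not directly control this operator. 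Your iteration $\rho_{reg}=\sum_k\LL_1^k g$ does not make sense as stated, since it presupposes $\LL_1\rho_{reg}$ continuous. Any fix must first isolate the jump created at $c_1$ as a finite-rank (or compact) piece and treat it separately, which is the missing idea in your proposal.
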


(M. Misiurewicz pointed out to us the related work of \cite{Sz}.)

\subsection{Comparing the invariant densities of two tangent perturbations}

Our main result is about perturbations $f_t$ which are
tangent to the topological class of $f_0$.  In this subsection,
we prove Proposition~\ref{fromKL} (using classical
Banach spaces, and tools from Keller-Liverani \cite{KL})
which will allow us to reduce from this
assumption to the hypothesis that $f_t$ lies in the topological class
of $f_0$.

We need more notation.
Let $f_t$ be a $C^{2,r}$ perturbation of a
piecewise expanding $C^r$ unimodal map ($r \ge 2$)
Define $J_t:=(-\infty , f_t(c)]$ and $\chi_t:\real \to \{0,1,1/2\}$ by
$$
\chi_t(x)=
0 \mbox{ if } x \notin J_t\, , \quad
\chi_t(x)=1 \mbox{ if }   x \in \mbox{int}\,  J_t\, , \quad
\chi_t(f_t(c))=\frac{1}{2}\, .
$$
The two inverse branches of  $f_t$,
a priori defined on $[f_t(a),f_t(c)]$ and $[f_t(b),f_t(c)]$,
may be extended to  maps $\psi_{t,+}: J_t \to (-\infty, c]$
and $\psi_{t,-}: J_t \to [c, \infty)$ in $C^r(J_t)$, with $\sup |\psi_{t,\sigma}'|<1$ 
for $\sigma=\pm$.
Put
\begin{equation}\label{L1t}
\LL_{1,t} \varphi(x):=
\chi_t(x)
\psi'_{t,+}(x)\varphi(\psi_{t,+}(x))
+\chi_t(x)
|\psi'_{t,-}(x)| \varphi(\psi_{t,-}(x))\, .
\end{equation}

Recall our choices $\lambda<1$ from (\ref{llambda})
and $\tau <1$ from (\ref{ttau}). Lemma~\ref{sp} applies
to $\LL_{1,t}$. By \cite{KL} we may assume that $t$ is small enough so that
$$
\max \bigl ( \lambda, \, 
\sup_t \sup \{ |z| \mid z \in \mbox{sp} \, (\LL_{1,t}|_{BV})\, , \, \, z\ne 1\}\} 
\bigr ) < \tau\, .
$$

We may now state the new result of this subsection:

\begin{proposition}\label{fromKL}
Let $f$ be a good mixing piecewise expanding $C^2$ unimodal map.
Then  for any $C\ge 1$ and every pair
$(f_t, g_t)$ of $C^{2,2}$ perturbations of $f$, and so that
\begin{equation}\label{tangt}
\sup_x | f_t(x)-g_t(x)|\le C t^2 \, ,\, \,  \forall |t|\le \epsilon\, ,
\end{equation}
there exist $C_1 \ge 1$,
 $\epsilon_0 >0$ and $\xi >1$ so that,
letting $\rho_t$  and $\tilde \rho_t$ denote the respective invariant densities of $f_t$
and $g_t$, we have
$$
\| \rho_t - \tilde \rho_t \|_{L^1(Leb)} \le C_1|t| ^\xi\, , \quad \forall |t|\le \epsilon_0 \, .
$$
\end{proposition}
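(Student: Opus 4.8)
The plan is to use the Keller--Liverani perturbation theory \cite{KL} applied to the transfer operators $\LL_{1,t}$ and $\widetilde{\LL}_{1,t}$ associated to $f_t$ and $g_t$, acting on the classical space $BV$. The two families share the fixed space $BV$ (with its fixed norm and the weak norm $|\cdot|_{L^1(Leb)}$), so the key quantity to control is the ``triple-norm'' distance
$$
\||\LL_{1,t}-\widetilde{\LL}_{1,t}\|| := \sup \{ |\LL_{1,t}\varphi - \widetilde{\LL}_{1,t}\varphi|_{L^1(Leb)} \mid \varphi \in BV,\ \|\varphi\|_{BV}\le 1\}\, .
$$
First I would check that $f_t$ and $g_t$, being $C^{2,2}$ perturbations with $\sup_x|f_t-g_t|\le Ct^2$, have inverse branches $\psi_{t,\sigma}$, $\widetilde\psi_{t,\sigma}$ that are $C^1$-close: since $\inf|f'|>1$ uniformly, $\sup_x|\psi_{t,\sigma}(x)-\widetilde\psi_{t,\sigma}(x)| = O(t^2)$ and $\sup_x|\psi_{t,\sigma}'(x)-\widetilde\psi_{t,\sigma}'(x)| = O(t^2)$ as well (the derivatives of the branches are governed by $1/f'$ evaluated at $C^2$-close points, and $\|f_t'-g_t'\|_{C^0}=O(t^2)$ after one differentiation of the $O(t^2)$ $C^0$-bound using uniform $C^2$ control). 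Also the characteristic functions differ: $\chi_t$ and $\widetilde\chi_t$ are indicators (up to the value $1/2$) of half-lines with endpoints $f_t(c)$ and $g_t(c)$ at distance $O(t^2)$.

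The main estimate is then that $\||\LL_{1,t}-\widetilde{\LL}_{1,t}\|| = O(|t|^{\eta})$ for some $\eta>1$ (in fact I expect $\eta$ close to $1$, say any $\eta<2$, but $\eta>1$ is all that is needed). One splits $\LL_{1,t}\varphi - \widetilde{\LL}_{1,t}\varphi$ into three pieces: (a) the difference coming from replacing $\psi_{t,\sigma}'$ by $\widetilde\psi_{t,\sigma}'$, bounded in $L^1(Leb)$ by $O(t^2)\,|\varphi|_{L^1(Leb)} \le O(t^2)\|\varphi\|_{BV}$ after a change of variables; (b) the difference from replacing $\varphi\circ\psi_{t,\sigma}$ by $\varphi\circ\widetilde\psi_{t,\sigma}$ — here one cannot use $\|\varphi'\|$ since $\varphi$ is only $BV$, but the standard trick is that $|\varphi\circ\psi - \varphi\circ\widetilde\psi|_{L^1(Leb)} \le \var(\varphi)\cdot\sup|\psi - \widetilde\psi| = O(t^2)\|\varphi\|_{BV}$ (a point $x$ moved by $\delta$ changes $\varphi$ by at most the variation of $\varphi$ over an interval of length $\delta$, and integrating gives $\le \delta\var(\varphi)$); and (c) the difference from $\chi_t$ versus $\widetilde\chi_t$, supported on an interval of length $O(t^2)$, where $\int |\chi_t-\widetilde\chi_t|\cdot|\psi_{t,\sigma}'\cdot\varphi\circ\psi_{t,\sigma}|\,dx \le \|\varphi\|_\infty\cdot O(t^2) \le \var(\varphi)\cdot O(t^2)$. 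Summing, $\||\LL_{1,t}-\widetilde{\LL}_{1,t}\|| = O(t^2)$.

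With this in hand, I invoke the Keller--Liverani stability of the leading eigenprojector: for both families the essential spectral radius on $BV$ is $\le\lambda<\tau<1$ (Lemma~\ref{sp}, applied to each map, which is legitimate because $f$ is good and mixing, hence stably mixing, so $f_t$ and $g_t$ are mixing and $\LL_{1,t},\widetilde{\LL}_{1,t}$ each have a simple leading eigenvalue $1$ with the rest of the spectrum inside $\{|z|\le\tau\}$), and uniform Lasota--Yorke bounds hold along both families by \cite[Remarks 1,5]{KL}. Keller--Liverani then gives $\|\Pi_t - \widetilde\Pi_t\|_{BV\to L^1(Leb)} \le \mathrm{const}\cdot \||\LL_{1,t}-\widetilde{\LL}_{1,t}\||^{\,\gamma}$ for some $\gamma\in(0,1)$ depending only on the Lasota--Yorke constants and on $\tau$, where $\Pi_t,\widetilde\Pi_t$ are the rank-one spectral projectors at the eigenvalue $1$. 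Since $\rho_t = \Pi_t\mathbf{1}_{[a,b]}$ and $\tilde\rho_t = \widetilde\Pi_t\mathbf{1}_{[a,b]}$ (both normalized to integrate to $1$, and $\mathbf 1_{[a,b]}\in BV$), we get
$$
|\rho_t - \tilde\rho_t|_{L^1(Leb)} \le \mathrm{const}\cdot (O(t^2))^{\gamma} = O(|t|^{2\gamma})\, ,
$$
so the proposition holds with $\xi = 2\gamma$ — which is $>1$ provided one tracks the Keller--Liverani exponent carefully; if the bare exponent $\gamma$ from \cite{KL} is not quite enough one boosts it by iterating, or one simply notes that the needed extension is the content of Appendix~\ref{newKL} referenced in the excerpt and cites that. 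The main obstacle is thus making the Keller--Liverani output explicit enough that the resulting power $\xi$ is genuinely strictly above $1$, rather than merely positive; once the triple-norm gap is shown to be $O(t^2)$ (which is comfortably better than the $O(t\ln|t|)$ one expects for a generic $C^{2,2}$ perturbation of fixed $f$), there is room to absorb the loss in the exponent, and this is exactly where the slight extension of \cite{KL} in Appendix~\ref{newKL} is used.
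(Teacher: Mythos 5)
Your overall plan is the same as the paper's: show the triple-norm gap $\||\LL_{1,t}-\widetilde\LL_{1,t}\||_{BV\to L^1}=O(t^2)$ (this is exactly the paper's estimate (\ref{tgt2})), invoke uniform Lasota--Yorke bounds (valid because $f$ is \emph{good}, hence (\ref{B}) holds), and then feed the $O(t^2)$ into Keller--Liverani to get an exponent $\xi=2\eta$ with $\eta<1$ but $\eta>1/2$. Up to that point you have reproduced the paper's argument faithfully, including the $C^1$-closeness of the inverse branches and the same split of $\LL_{1,t}\varphi-\widetilde\LL_{1,t}\varphi$ into the weight-, composition-, and domain-difference pieces.

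The gap is at the very end, and it is the crux. You correctly identify that the whole point is to make $\xi>1$ rather than merely $\xi>0$, but your two proposed remedies do not supply the missing content. \textquotedblleft Boosting by iterating\textquotedblright{} is not how the Keller--Liverani exponent is improved: their $\eta$ already arises by optimizing over the number of iterates in the Lasota--Yorke cut-off, so iterating the operator or the projector bound gains nothing. And Appendix~\ref{newKL} is not what is used here: that appendix proves a uniform resolvent bound (\ref{claim0}) for the operators $\PP_t$ on the spaces $\BB_0$ with the sequence of weak norms $|\cdot|_{weak,j}$, which is the ingredient for Step~3 of the proof of Theorem~\ref{formula}; it plays no role in Proposition~\ref{fromKL}, whose proof is entirely in the classical $(BV, L^1)$ pair. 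What the paper actually does is revisit the \emph{second part of the proof} of \cite[Theorem~1]{KL} to extract a mixed-norm resolvent estimate valid for \emph{any} $\eta<1$ (with constants depending on $\eta$): roughly $\|\QQ_t^{-1}(\varphi)\|_{L^1}\le (t^2)^\eta A\,\|\varphi\|_{BV} + (t^2)^{\eta-1}B\,\|\varphi\|_{L^1}$. One then writes $\QQ_t^{-1}-\widetilde\QQ_t^{-1}=\QQ_t^{-1}(\LL_{1,t}-\widetilde\LL_{1,t})\widetilde\QQ_t^{-1}$ and applies that estimate not to a generic $\varphi$ but to $\varphi'=(\LL_{1,t}-\widetilde\LL_{1,t})\widetilde\QQ_t^{-1}(\varphi)$, whose $L^1$ norm is $O(t^2)$ by (\ref{tgt2}); this makes \emph{both} terms $O((t^2)^\eta)$, and choosing any $\eta>1/2$ gives $\xi=2\eta>1$. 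So the resolution is a careful reading of Keller--Liverani's proof (showing the exponent is a free parameter in $(0,1)$), combined with placing the $O(t^2)$ weak bound in the right slot of the resolvent identity, not a separate appendix. You should spell this out rather than leave it to a misdirected citation.
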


\begin{remark}\label{needgood}
The assumption that $f$ is good 
is crucial in the above proposition since otherwise
we do not have uniform Lasota-Yorke bounds (\ref{B}) in general.
\end{remark}

\begin{proof}
Recall  $\lambda <1$ 
from (\ref{llambda}) (we require that (\ref{llambda2})
hold for $g_t$ too).
Denote by $\LL_{1,t}$ the transfer
operator of $f_t$, by $\widetilde \LL_{1,t}$ the transfer operator of
$g_t$,  acting on $BV$.
Each $\LL_{1,t}$ and each $\widetilde \LL_{1,t}$ has
a simple maximal eigenvalue at $z=1$ and essential spectral radius $\le \lambda$
for small enough $t$. 
Our assumptions ensure that
\begin{equation}\label{unif}
\| f_t(x)\|_{C^{1+Lip}(V)}\le C\, ,
\quad \| g_t(x)\|_{C^{1+Lip}(V)}\le C\, ,
\end{equation}
on a neighbourhood $V$ of $(I_+\cup I_-) \times \{0\}$.
Also, there exist $\widetilde C$ and $\epsilon_1>0$
depending only on $f$ and $C$ so that
(our assumptions imply that $g_t$ and $f_t$
satisfy (\ref{forweak}))
\begin{align}\label{A}
&\sup_j \|\LL^j_{1,t}\|_{L^1(Leb)} < \widetilde C
\,  , \quad
 \sup_j \|\tilde \LL^j_{1,t} \|_{L^1(Leb)}< \widetilde C \, , \, \, \forall |t|\le \epsilon_1 \, ,
\\
\nonumber &
\| \LL_{1, t}( \varphi) - \LL_{1,0}( \varphi) \|_{L^1(Leb)} \le \widetilde C |t|\|\varphi\|_{BV}\, ,
 \, \, \forall \varphi \in BV\, , \, \, 
  \forall |t|\le \epsilon_1\, ,
\\
\nonumber &
\|\tilde \LL_{1, t}( \varphi) - \LL_{1,0}( \varphi) \|_{L^1(Leb)} \le \widetilde C |t|\|\varphi\|_{BV}\, ,
 \, \, \forall \varphi \in BV\, , \, \, 
  \forall |t|\le \epsilon_1 \, .
\end{align}
also, since  $f$ is good \cite[Remark 5]{KL},
\begin{equation}\label{B}
\max ( \|\LL^j_{1,t} \varphi\|_{BV} ,
\|\tilde \LL^j_{1,t}\|_{BV}) 
\le \widetilde C \lambda^j \|\varphi\|_{BV} + \widetilde C \|\varphi\|_{L^1}\, ,
 \, \, \forall \varphi \in BV\, , \, \,  \forall |t|\le \epsilon_1 \, ,
\end{equation}
finally, (\ref{unif}) and (\ref{tangt}) imply $\|(f_t-g_t)|_{I_\sigma}\|_{C^1}=O(t^2)$,
with a constant depending only on $f$ and $C$, and thus
\begin{equation}\label{tgt2}
 \|\LL_{1,t} (\varphi )- \tilde \LL_{1,t} (\varphi) \|_{L^1(Leb)} \le \widetilde C t^2\|\varphi\|_{BV}\, ,
 \, \,  \forall \varphi \in BV\, , \, \,  \forall |t|\le \epsilon_1
\, . 
\end{equation}

It follows from (\ref{A}--\ref{B}) for
$\widetilde \LL_{1,t}$, $\LL_1$, and \cite[Theorem 1]{KL} that 
for each small enough $\delta >0$ there are $\epsilon_2>0$
and $\widehat C \ge 1$, depending
only on $f$ and $C$ so that
\begin{equation}\label{tildeok}
  \| (z-\widetilde \LL_{1,t})^{-1} \|_{BV} \le \widehat C \, , \, 
  \forall |t|\le \epsilon_2\, ,
  \forall z \mbox{ with } |z|\ge \tau+\delta\mbox{ and } |z-1|\ge \delta\, .
  \end{equation}
We claim that the above estimate together
with (\ref{tgt2}) implies $\|\rho_t - \tilde \rho_t\|_{L^1(Leb)}=O(|t|^{2\eta})$ for
any $\eta < 1$.
Taking $\eta$ so that $2\eta >1$, the claim ends the proof.

To obtain the claim, we revisit the proof of \cite[Theorem~1]{KL}.
Following Keller--Liverani, we put $\QQ_t=(z-\LL_{1,t})$ and $\widetilde \QQ_t=(z-\widetilde \LL_{1,t})$.
In the sense of formal power series in $z$, we have for all $|t|\le \epsilon$
\begin{equation}\label{resolv}
\QQ_t^{-1} - \widetilde \QQ_t^{-1}=
 \QQ_t^{-1}(\LL_{1,t}-\widetilde \LL_{1,t}) \widetilde \QQ_t^{-1}\, .
\end{equation}
By  (\ref{tildeok}) and (\ref{tgt2}), the second part of the
proof of \cite[Theorem~1]{KL} 
gives that for any $\eta <1$ and $\gamma >0$, there are constants
$\epsilon_0>0$,
$\widetilde A\ge 1$, $\widetilde B\ge 1$, depending only on
$\eta$, $\widetilde C$  and $\gamma$, so that
for  any $z$ satisfying  $|z|\ge \tau+\gamma$ and $|z-1|\ge \gamma$,
all $\varphi \in BV$, and all $|t|\le \epsilon_0$,
\begin{align}
\|\QQ_t^{-1}(\varphi)\|_{L^1(Leb)}
&\le 2 (t ^2)^\eta
\bigl (\widetilde A \| \widetilde \QQ_t^{-1}\|_{BV}+ \widetilde B)\|\varphi\|_{BV}\\
\nonumber &\qquad\qquad
+2
(t^2)^{\eta-1} \biggr (\widetilde C \|\widetilde \QQ_t^{-1}\|_{BV}+ \frac {\widetilde C}{1-\tau}\biggl )
\|\varphi\|_{L^1(Leb)} \, .
\end{align}
Applying the above estimate to $(\LL_{1,t}-\widetilde \LL_{1,t}) \widetilde \QQ_t^{-1}(\varphi)$ and using (\ref{resolv}), we get
\begin{align}
\nonumber \| (\QQ_t^{-1}&- \widetilde \QQ_t^{-1}) (\varphi)\|_{L^1}\\
&\le  2 |t| ^{2\eta}( \| \LL_{1,t}\|_{BV}+\|\widetilde \LL_{1,t}\|_{BV})
\bigl (\widetilde A \| \widetilde \QQ_t^{-1}\|_{BV}+ \widetilde B) \| \widetilde \QQ_t^{-1}\|_{BV}\|\varphi\|_{BV}\\
\nonumber &\qquad
+2
C |t|^{2\eta} \biggr (\widetilde C \|\widetilde \QQ_t^{-1}\|_{BV}+ \frac {\widetilde C}{1-\tau}\biggl )
 \| \widetilde \QQ_t^{-1}\|_{BV}\|\varphi\|_{BV} \, ,
\end{align}
for any $\varphi \in BV$.
Writing the difference between the spectral projectors for the eigenvalue $1$
of $\LL_{1,t}$ and $\widetilde \LL_{1,t}$ as a contour integral of the 
difference of the resolvents, this shows the
claim.
\end{proof}

\subsection{Spaces of sums of smooth functions and postcritical jumps}
\label{3.2}

In this subsection we shall introduce Banach spaces $\BB_{t}\subset BV$
and $\BB^{Lip}_t\subset BV$ of functions with controlled jumps 
along the postcritical orbit, on which the transfer operators
$\LL_{1,t}$ have essential spectral radius $\le \lambda$, in view of the proof of
our main theorem in Section~\ref{four}.

Let $f$ be a mixing piecewise expanding $C^3$ unimodal map.
Recall that $N_f=n_0+n_1-1$ if $c$ is preperiodic, $N_f=n_1$ if
$c$ is periodic, and $N_f=\infty$ otherwise.
Let $\widetilde {BV}$ be the Banach space of continuous functions of
bounded variation supported in $[a,b]$, for the $BV$ norm. 
Fix $\eta >0$ small. 
Consider the Banach space $(\widehat \BB, \|\cdot\|)$ of pairs $\phi=(\phi_{reg}, \phi_{sal})$
with $\phi_{reg}\in \widetilde {BV}$,
and $\phi_{sal}=(u_k)_{k =1, \ldots, N_f }$,
normed by
\begin{equation}\label{clear}
\|\phi\|= \|\phi_{reg}\|_{BV} + | \phi_{sal} |_\eta
\mbox{ with }|\phi_{sal}|_\eta =\sup_{1\le k\le N_f}  (1+\eta)^k  |u_k|\, ,
\end{equation}
and so that, in addition,
\begin{equation}\label{bdok}
\phi_{reg}(x)=\sum_{k= 1}^{N_f} u_k \, , \, 
\forall x < a\, .
\end{equation}
We define 
$\Gamma=\Gamma_0 : \widehat \BB \to BV$ by 
\begin{equation}
\Gamma(\phi_{reg}, (u_k)_{k\ge 1})=\phi_{reg} + \sum_{k = 1}^{N_f} u_k H_{c_k} \, .
\end{equation}
(In particular, $\supp (\Gamma(\phi)) \subset [a,b]$.) 
The map $\Gamma$ is injective, and we define $\BB_{0} \subset BV$ to be
the isometric image of $\widehat \BB$ under $\Gamma$.

It is easy to see that $\rho_0 \in \BB_{0}$.
For $\phi=(\phi_{reg} , (u_k)_{k\ge 1}) \in   \widehat \BB$, 
we may decompose $\tilde \varphi=\LL_1(\Gamma(\phi))\in BV$
into $\tilde \varphi=\tilde \varphi_{reg}+\tilde \varphi_{sal}$.
Then, we have 
$$
\tilde \varphi_{sal}=\sum_{k\ge 1} w_k  H_{c_k} \, ,
$$ 
with (writing $f'(c_-)=\lim_{y\uparrow c} f'(y)$ and  $f'(c_+)=\lim_{y\downarrow c} f'(y)$)
\begin{equation}\label{spart'}
\begin{cases}
w_k= \frac{u_{k-1}}{f'(c_{k-1})}\, , & k \ge 2\, , \\
w_1 = -(\frac{1}{ |f'(c_-)|} + \frac{1}{ |f' (c_+)|})
\bigl (\phi_{reg}(c)+\sum_{k\ge 1, c_k >c} u_k\bigr ) \, ,\\ 
\end{cases}
\end{equation}
if the postcritical orbit is infinite (i.e., $N_f=\infty$), while
\begin{equation}\label{spart}
\begin{cases}
w_k= \frac{u_{k-1}}{f'(c_{k-1})}\, , & 2 \le k \le N_f\, , k \ne n_0 \\
w_{n_0}= \frac{u_{n_0-1}}{f'(c_{n_0-1})}+
 \frac{u_{n_0+n_1-1}}{f'(c_{n_0+n_1-1})}\, ,& \mbox{if } n_0 \ne 1\, ,  \\
w_1 = -(\frac{1}{ |f'(c_-)|} +\frac{1}{ |f' (c_+)|})
\bigl( \phi_{reg}(c)+\sum_{k\ge 1, c_k >c} u_k\bigr ) \, ,\\ 
\end{cases}
\end{equation}
if $N_f< \infty$.  Also, we find 
\begin{align}\label{rpart}
\tilde \varphi_{reg}& = \LL_1 (\phi_{reg}) \\
\nonumber &\qquad
+H_{c_1} 
\biggl( \frac {1}{ |f' (c_-)|} + \frac {1}{  |f' (c_+)|}\biggr )
\cdot \biggl (\phi_{reg}(c) +\sum_{1\le k\le N_f, c_k > c}   u_k\biggr )
\\
\nonumber &\qquad
+\sum_{k= 2}^{N_f} u_{k-1}
\biggl (\LL_1(H_{c_{k-1}}) - \frac{H_{c_k}}{f'(c_{k-1})}\biggr )\, .
\end{align}
It is thus not difficult
to check that  $\tilde \varphi\in \BB_{0}$.
We next prove that in fact $\LL_1$ is bounded on
$\BB_{0}$ with essential spectral radius $\le \lambda$.

We shall use   that if $\LL$ is a bounded
operator on a Banach space $\BB$, and $\KK$ is a compact
operator on $\BB$, then the essential spectral radii 
of $\LL$ and $\LL-\KK$ coincide
(see e.g. \cite{DS} or \cite[Theorem IV.5.35]{Ka}).
This fact is behind  most techniques to estimate the essential
spectral radius: Lasota-Yorke or Doeblin-Fortet bounds, Hennion's theorem,
the Nussbaum formula, see e.g. \cite{Bb}.
In view of this, recall that  the $BV$-closed unit ball is 
compact for the $L^1(Leb)$ norm.
(See e.g. \cite[\S3.2,
Prop. 3.3]{Bb}  for a proof of this Arzel\`a-Ascoli type result).
In view of obtaining compact perturbations if $N_f=\infty$, note that for any $\delta>0$
there is $k_\delta =O(\ln(\delta^{-1}))$ 
so that for any $\phi =(\phi_{reg}, (u_k)_{k\ge 1})\in \widehat \BB$, 
\begin{equation}\label{cpct}
\sum_{k\ge k_\delta}|u_k| \le \delta \sup_{k\ge 1}\bigl ((1+\eta)^k |u_k| \bigr ) \, .
\end{equation}

For $\varphi \in BV$,
we write $\Pi_{reg}(\varphi)=\varphi_{reg} \in C^0$ and $\Pi_{sal}(\varphi)=\varphi_{sal}$.
If $N_f\ne \infty$, the operator
$\KK_0(\varphi )=\Pi_{sal}(\LL_1(\varphi))
$ 
is finite rank on $\BB_0$, and thus compact. 
If $N_f=\infty$, the operator
$$\KK_0(\varphi )=-H_{c_1}( \varphi_{reg}(c)+\sum_{k\ge 1, c_k >c} u_k)
( |f'(c_-)|^{-1} +  |f' (c_+)|^{-1})
$$ 
is rank one, and thus compact, while
the operator $\Pi_{sal}\circ  (\LL_1-\KK_0) $
has norm bounded by $(1+\eta) \sup |f' |^{-1}$ by definition. 

We   next consider $\Pi_{reg} \circ \LL_1$.
If $N_f< \infty$, the second and third lines of (\ref{rpart}) are finite rank 
contributions, which will be denoted by
 $\KK_1(\phi)$.  If $N_f=\infty$, since
$$
\sup_{k\ge 2}
\biggl  \| \LL_1(H_{c_{k-1}}) - \frac{H_{c_k}}{f'(c_{k-1})}
\biggr \|_{BV} < \infty\, ,
$$
then (\ref{cpct})
implies that the second and third line of (\ref{rpart}) give a compact
contribution, also denoted by $\KK_1(\phi)$.

Then, consider the Radon measure $(\Pi_{reg} \circ \LL_1 (\varphi)-\KK_1(\varphi))'$.
By the Leibniz formula
we have, as Radon measures,
\begin{align}\label{Leibagain}
\nonumber (\Pi_{reg} \circ \LL_1( \varphi)-\KK_1(\varphi))'(y)&=\chi_J
\biggl ( \frac{ f''(\psi_+(y))}{(f'(\psi_+(y)))^2} \varphi(\psi_+(y))
-\frac{ f''(\psi_-(y))}{(f'(\psi_-(y)))^2} \varphi(\psi_-(y))\\
 &\qquad \qquad+ \frac{\varphi'(\psi_+(y))} {(f'(\psi_+(y)))^2} 
- \frac{\varphi'(\psi_-(y))} {(f'(\psi_-(y)))^2} \biggr )\, .
\end{align}
By the compact inclusion property
mentioned above, the contribution $\varphi_1$ in the first line is compact, 
let us call $(\KK_2(\varphi))'=\varphi_1$ the
corresponding operator. 
Now, the operator $\varphi' \mapsto \MM(\varphi')=
(\Pi_{reg} \circ \LL_1 (\varphi)-\KK_1(\varphi)-\KK_2(\varphi))'$ is bounded on measures,
with norm at most $\sup (|f'|^{-1}) \|\LL_1\|_\infty$
where $\|\LL_1\|_\infty$ is the operator norm of
$\LL_1$ acting on bounded functions. Applying the above argument to $\LL_1^j$, 
and using $\sup_j \|\LL^j_1\|_\infty < \infty$, we obtain  for each $j\ge 1$ a decomposition
$\LL_1^j =\KK^{(j)} + \MM^{(j)}$
where $\KK^{(j)}$  is  compact on $\BB_{0}$,  and
$ \|\MM^{(j)}\|_{\BB_0}\le  C_0 (1+\eta)^j \sup (|(f^j)'|^{-1})$.
Therefore, the essential spectral radius of $\LL_1$ on
$\BB_{0}$ is  $\le \lambda$. 

Consider now the Banach space 
$(\widehat \BB^{Lip}, \|\cdot\|)$ of pairs $\phi=(\phi_{reg}, \phi_{sal})$
with $\phi_{reg}\in Lip((-\infty,b])$,
and $\phi_{sal}=(u_k)_{k =1, \ldots, N_f }$,
normed by
$
\|\phi\|= \|\phi_{reg}\|_{Lip} + | \phi_{sal} |_\eta$
and so that
$\phi_{reg}(x)=\sum_{k= 1}^{N_f} u_k$
for all $x < a$ (in particular, $\phi_{reg}$ is constant
on $(-\infty,a)$). Using $\Gamma$ as above,
we define a Banach space $\BB_0^{Lip}\subset \BB_0\subset BV$.
Since $\|\phi\|_{Lip}= \|\phi'\|_{L^\infty}$ 
and since the $Lip([a,b])$- closed unit ball is compact in the
$L^\infty([a,b])$ topology, the same argument as above shows that
$\LL_1$ is bounded on $\BB_0^{Lip}$, with essential spectral
radius $\le \lambda$.
Since $BV^{(1)}\subset Lip$, we have that $\rho_0 \in \BB_0^{Lip}$.

If $f_t$ is a $C^{2,3}$ perturbation of $f$ we may
define $\BB_{t}$ and $\BB^{Lip}_t$ for each $t$ by taking the isometric
image in $BV$ of  $\widehat \BB$, respectively $\widehat \BB^{Lip}$ under 
$\Gamma_t$ defined by
$$
\Gamma_t \biggl (\phi_{reg}, (u_k)_{k \ge 1})\biggr ) = \phi_{reg}+
\sum_{k =1} ^{\infty} u_k H_{c_{k,t}} \, .
$$
The argument above shows that
$\LL_{1,t}$ has essential spectral radius bounded by $\lambda$ on
$\BB_{t}$ and $\BB_t^{Lip}$.
Since each $\BB_{t}$ and each $\BB^{Lip}_t$ is a subset of $BV$ and since $\rho_t\in \BB_t^{Lip}\subset \BB_t$, we have
proved that outside of the disc of radius $\tau$ the spectrum of
$\LL_{1,t}$ on $\BB_t$ or on $\BB^{Lip}_t$ consists in a simple eigenvalue at $1$, with
corresponding spectral projector $\varphi\mapsto \rho_t \int \varphi\, dx$.


\section{The susceptibility function and the candidate $\Psi_1$ for the derivative}
\label{fourr}

The susceptibility function \cite{Ru3} associated to  a piecewise expanding
$C^2$ unimodal map $f$, a test function $\varphi\in C^1([a,b])$,
and a deformation $v=X\circ f$ for  $X \in C^1([a,b])$ is   the formal power series
\begin{align}\label{susceptibility}
\Psi(z)&=\sum_{n=0}^\infty
\int z^n  X(y) \rho_0(y)( \varphi \circ f^n )' (y)\, dy =\sum_{n=0}^\infty
\int z^n \LL^n_0( X \rho_0)(x) \varphi' (x)\, dx\, .
\end{align}
In this section, we recall in Proposition~\ref{candidate} 
the resummation $\Psi_1$ proposed
in \cite{Ba} for the a priori divergent series $\Psi(1)$ when $X \circ f$
is horizontal.  In addition, we give in Lemma~\ref{reallynice} an expression for $\Psi_1$
in terms of the infinitesimal conjugacy $\alpha$ from Section~\ref{two}, and
we show that $\Psi_1$ is not well-defined if $X\circ f$ is not horizontal
(Proposition~\ref{nono}).

\begin{remark}\label{noXf}
If the infinitesimal deformation $v$ is not of the form $X \circ f$,
the heuristic argument of Ruelle \cite{Ru1ph}
suggests to define the susceptibility function  ~as:
\begin{align*}
\Psi(z)&=\sum_{n=0}^\infty
\int z^n \LL_1(v \rho_0 )(y) ( \varphi \circ f^n)' (y)\, dy =\sum_{n=0}^\infty
\int z^n \LL^n_0\bigl (\LL_1(v\rho_0 ) \bigr )(x) \varphi' (x)\, dx\, .
\end{align*}
The analysis of the above expressions produces additional
difficulties, and will not be pursued here.
\end{remark}

Since $X\rho_0 \in BV$, Lemma \ref{sp} implies that
the power series $\Psi(z)$ extends to a holomorphic function in the open unit
disc, and in this disc we have 
$$
\Psi(z)=
\int (\id-z \LL_0)^{-1} (X  \rho_0) (x) \, \varphi' (x)\, dx \, .
$$

Recalling the jumps 
$s_n$ in the saltus term $\rho_{sal}$ for $\rho$ (see (\ref{dec0})),
the weighted total jump of $f$ defined in \cite{Ba} is:
\begin{align}\label{saltusid}
\JJ(f,X)&=\sum_{n=1}^{N_f} s_{n} X(c_{n}) \, .
\end{align}

In \cite{Ba}, we resummed the possibly
divergent series $\Psi(1)$ under the condition
$\JJ(f,X)=0$ (see Proposition~\ref{candidate}
below). We have the following simple but enlightening lemma:

\begin{lemma}\label{nice}
Assume that $f$ is 
a piecewise expanding $C^2$ unimodal map $f$,
and that $X : I \to \real$ is bounded.
Define $\alpha_{(0)}(c_1)$ by (\ref{aalpha}) for
$v=X\circ f$. Then 
$$\JJ(f,X)=
s_1(X(c_1)-\alpha_{(0)}(c_1))\, .
$$
\end{lemma}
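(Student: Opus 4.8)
The plan is to unwind the definition of $\JJ(f,X)$ and compare it, term by term, with the explicit formula \eqref{aalpha} for $\alpha_{(0)}(c_1)$. The starting point is the recursion satisfied by the saltus coefficients $s_n$: applying $\LL_1$ to the decomposition $\rho_0=\rho_{reg}+\sum_{n}s_n H_{c_n}$ and using $\LL_1\rho_0=\rho_0$, one reads off from \eqref{spart'}--\eqref{spart} (with $\phi_{reg}=\rho_{reg}$, $u_k=s_k$) that $s_{n}=s_{n-1}/f'(c_{n-1})$ for $n\ge 2$ (in the non-Markov case; with the obvious modification at $n=n_0$ when $c$ is preperiodic or periodic). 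Iterating this gives $s_n = s_1/(f^{n-1})'(c_1)$ for all relevant $n$; recall $(f^{n-1})'(c_1)=\prod_{j=1}^{n-1}f'(c_j)$.

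Next I would substitute this into the definition \eqref{saltusid}: with $v=X\circ f$, so that $v(c_j)=X(c_{j+1})$,
\[
\JJ(f,X)=\sum_{n=1}^{N_f} s_n X(c_n)
= s_1 X(c_1)+ s_1\sum_{n=2}^{N_f}\frac{X(c_n)}{(f^{n-1})'(c_1)}
= s_1\sum_{n=1}^{N_f}\frac{X(c_n)}{(f^{n-1})'(c_1)}
= s_1\sum_{j=0}^{N_f-1}\frac{v(c_j)}{(f^{j})'(c_1)}.
\]
Now I compare with \eqref{aalpha} evaluated at $x=c_1$ for $v=X\circ f$. When $c$ is not preperiodic/periodic ($N_f=\infty$), the first branch of \eqref{aalpha} gives $\alpha_{(0)}(c_1)=-\sum_{j\ge 0} v(f^j(c_1))/(f^{j+1})'(c_1) = -\sum_{j\ge 0} v(c_{j+1})/(f^{j+1})'(c_1)= -\sum_{j\ge 1} v(c_j)/(f^{j})'(c_1)$. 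Hence $v(c_0)-\alpha_{(0)}(c_1)= v(c)+\sum_{j\ge1} v(c_j)/(f^j)'(c_1)=\sum_{j\ge0} v(c_j)/(f^j)'(c_1)$ (using $(f^0)'(c_1)=1$ and $v(c_0)=v(c)=X(c_1)$), which is exactly $\JJ(f,X)/s_1$. In the preperiodic case one uses the second branch of \eqref{aalpha}: since $c_{N_f}=c_{n_0+n_1-1}$ maps forward to $c$, there is $\ell$ with $f^\ell(c_1)=c$, and the finite sum telescopes in the same way, with the $\omega/(f^\ell)'$ term vanishing because $\omega=\alpha_{(0)}(c)=0$; the periodic case ($n_0=1$, $N_f=n_1$) is identical, using that $f^{n_1-1}(c_1)=c$.

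The only real subtlety — the step I'd expect to need the most care — is bookkeeping the preperiodic/periodic cases: one must check that the index $N_f=n_0+n_1-1$ in \eqref{saltusid} matches the place where the orbit $c_1,c_2,\dots$ first returns to $c$ under iteration, so that the truncated sum in \eqref{aalpha} has exactly the terms $j=0,\dots,N_f-1$, and that the extra $w_{n_0}$ term in \eqref{spart} does not disturb the telescoping of $\JJ$ (it doesn't, because it only reflects that $H_{c_{n_0}}$ receives contributions from two preimages, while the value $s_{n_0}$ itself still satisfies $s_{n_0}=s_{n_0-1}/f'(c_{n_0-1})$ coming from the $c_{n_0-1}$ preimage — the periodic tail contributes to $s_{n_0}$ only through consistency of the already-closed loop). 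Modulo this case analysis, the identity $\JJ(f,X)=s_1\bigl(X(c_1)-\alpha_{(0)}(c_1)\bigr)$ follows by direct comparison.
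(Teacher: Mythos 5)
Your overall strategy -- iterate the recursion on the saltus coefficients $s_n$, substitute into the definition of $\JJ(f,X)$, and compare term by term with the explicit formula (\ref{aalpha}) for $\alpha_{(0)}(c_1)$ -- is exactly the paper's approach, and your treatment of the infinite-orbit case and the periodic case ($n_0=1$, $c_{n_1}=c$, so the second branch of (\ref{aalpha}) applies with $\omega=0$) is correct.

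However, your treatment of the preperiodic case has a genuine gap, in fact two related errors. First, you assert that $s_n=s_1/(f^{n-1})'(c_1)$ for all $n\le N_f$, waving away the extra term in $w_{n_0}$. This is false: because $\LL_1\rho_0=\rho_0$ forces $s_k=w_k$, the recursion at $n_0$ reads $s_{n_0}=s_{n_0-1}/f'(c_{n_0-1})+s_{n_0+n_1-1}/f'(c_{n_0+n_1-1})$, and solving this (using $s_{n_0+n_1-1}/f'(c_{n_0+n_1-1})=s_{n_0}/(f^{n_1})'(c_{n_0})$) gives $s_{n_0+j}=\frac{s_1}{(f^{n_0+j-1})'(c_1)}\cdot\frac{1}{1-1/(f^{n_1})'(c_{n_0})}$ for $0\le j\le n_1-1$. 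The extra geometric factor is real and does not disappear by any ``consistency of the closed loop.'' Second, you claim that since $c_{n_0+n_1-1}$ ``maps forward to $c$'' there is $\ell$ with $f^\ell(c_1)=c$, so that the \emph{second} branch of (\ref{aalpha}) applies. This is also false: in the preperiodic (non-periodic) case $c_{n_0+n_1-1}$ maps to $c_{n_0}$, not to $c$, and the forward orbit of $c_1$ never hits $c$. So $\alpha_{(0)}(c_1)$ is given by the \emph{first} branch of (\ref{aalpha}), an infinite series; one must resum its eventually periodic tail, which is precisely what produces the factor $\frac{1}{1-1/(f^{n_1})'(c_{n_0})}$ matching the corrected formula for $s_n$. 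This is what the paper does; your proposal as written gets the wrong value on both sides in the preperiodic case and only appears to close because the errors are not computed out.
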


Since  $s_1<0$,  the lemma
implies  $\JJ(f,X)=0$ if and only if $\alpha_{(0)}(c_1)=X(c_1)$, i.e.,
if and only if $X \circ f$ is horizontal for $f$.

\begin{proof}
If $c$ is neither periodic nor preperiodic, then
 $s_k = f'(c_k) s_{k+1}$ for $k\ge1$, and thus 
\begin{equation}
  \JJ(f,X)=
s_1(X(c_1)-\alpha_{(0)}(c_1))=s_1  \sum_{j\ge 0} \frac{X(f^{j}(c_1))}{(f^{j})'(c_1)}
\end{equation}
(see  \cite[Rem. 4.5]{Ba}). 
The case of periodic $c$ is similar using $s_k = f'(c_k) s_{k+1}$ for $1\le k \le n_1-1$
and $M_f=n_1$.

If $c$ is preperiodic,
using $s_k = f'(c_k) s_{k+1}$ for $1\le k \le n_0+n_1-2$,
$k\ne n_0-1$, and 
$$s_{n_0}=
\frac{s_{n_0-1}} {f' (c_{n_0-1})} +
\frac{s_{n_0+n_1-1} }{f' (c_{n_0+n_1-1})}=
\frac{s_{n_0-1}} {f' (c_{n_0-1})} +
\frac{s_{n_0} }{(f^{n_1})' (c_{n_0})}
\, ,
$$
which implies  $(1-(f^{n_1})'(c_{n_0}))s_{n_0}= s_{n_0-1}/(f'(c_{n_0-1}))$
and thus 
$$s_{n_0+j}=\frac{s_1}{(f^{n_0+j-1})'(c_1)}
\frac{1}{1-1/(f^{n_1})'(c_{n_0})}\, , \quad 0\le j \le n_1-1\, ,
$$
we get
\begin{align*}
\JJ(f,X)&=
s_1 \biggr (\sum_{n=0}^{n_0-2}
 \frac{ X(f^{n}c_{1}) }{(f^{n})'(c_1)}+ \sum_{j=0}^{n_1-1}
 \frac{X(f^{n_0+j-1}(c_1))} {(f^{n_0+j-1})'(c_1)}
\frac{1}{1-1/(f^{n_1})'(c_{n_0})} 
\biggl )\\
&=s_1(X(c_1)-\alpha_{(0)}(c_1))\, .
\end{align*}
\end{proof}

We next recall the candidate $\Psi_1$ for the
derivative of $t\mapsto \RR(t)$ from Ruelle's conjecture as interpreted in \cite{Ba}.
Note that if  $X\in C^2(f(I))$  satisfies $X(a)=0$
then the function $\widetilde X$
defined by $\widetilde X(x):=X(x)$ for $x\ge a$
and $\widetilde X(x):=0$ for $x\le a$ is such that $\widetilde X'$ is  of bounded 
variation, and $\widetilde X' \tilde \rho$ is supported in $[a,b]$ for any $\tilde \rho$ 
supported in $(-\infty, b]$.
Recall $M_f$ from (\ref{Mf}).
Then, by Proposition~\ref{decc} and the properties
of $s_k$ from the proof of Lemma~\ref{nice},
putting together \cite[Lemma 4.1, Proposition 4.4, Theorem 5.2]{Ba}
gives
\footnote{Theorem 5.2 in \cite{Ba} also holds if $c$ is periodic,
with a similar proof.}:

\begin{proposition}\label{candidate}
Let $f$ be  a mixing
piecewise expanding $C^3$ unimodal map. 
Let $X \in C^2(f(I))$ satisfy $X(a)=0$ and
$\JJ(f,X)=0$.
For $\varphi \in C^1([a,b])$ and
$|z|< 1$:
\begin{align}\label{chsusceptibility}
\Psi(z)
&=-\sum_{j=1}^{\infty} \varphi(c_j)  \sum_{k=1}^{\min(j,M_f)} z^{j-k}  
  \frac {s_1 X(c_k)}{(f^{k-1})'(c_1)}\\
\nonumber &\qquad\qquad\qquad-
\int (\id- z\LL_1)^{-1} (X' \rho_{sal}+(X\rho_{reg})') \varphi \, dx  \, .
\end{align}
The second term in (\ref{chsusceptibility}) extends to a holomorphic function in the
open disc
of radius  $\lambda_0^{-1}$.
If $c$ is periodic or preperiodic
then the first term of (\ref{chsusceptibility})
is a rational function which is holomorphic at $z=1$. 

In addition, 
the following is a well-defined complex number
\begin{align}\label{psi1}
\Psi_1
&=-\sum_{j=1}^{M_f} \varphi(c_j)  \sum_{k=1}^j    \frac {s_1 X(c_k)}{(f^{k-1})'(c_1)}
-
\int (\id- \LL_1)^{-1} (X' \rho_{sal}+(X\rho_{reg})') \varphi \, dx \, .
\end{align}
\end{proposition}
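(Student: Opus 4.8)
The plan is to start from the resolvent form of the susceptibility function, which for $|z|<1$ reads $\Psi(z)=\int(\id-z\LL_0)^{-1}(X\rho_0)\,\varphi'\,dx$ (stated in the text as a consequence of Lemma~\ref{sp}), and to split off the jump part of the invariant density. Write $\rho_0=\rho_{reg}+\rho_{sal}$ as in (\ref{dec0}), so that $X\rho_0=X\rho_{reg}+\sum_{n=1}^{N_f}s_n\,X H_{c_n}$. By Proposition~\ref{decc}, $\rho_{reg}\in BV^{(1)}$; since $X\in C^2(f(I))$ and $X(a)=0$ make the extension $\widetilde X$ Lipschitz with $\widetilde X'\in BV$, the product $X\rho_{reg}$ again lies in $BV^{(1)}$.

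On the regular part I would iterate the intertwining identity (\ref{keye}): for $\psi\in BV^{(1)}$ it gives $(\LL_0^n\psi)'=\LL_1^n(\psi')$, so integration by parts turns $\int\LL_0^n(X\rho_{reg})\,\varphi'\,dx$ into $-\int\LL_1^n((X\rho_{reg})')\,\varphi\,dx$, and summing the geometric series in $z$ yields $-\int(\id-z\LL_1)^{-1}((X\rho_{reg})')\,\varphi\,dx$. On the jump part I would compute $\LL_0 H_{c_n}$ from (\ref{L0}): it is an element of $BV^{(1)}$ plus $(f'(c_n))^{-1}H_{c_{n+1}}$, together with a jump at $c_1$ created by the turning-point term -- exactly the transport rules recorded in (\ref{rpart})--(\ref{spart}). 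Iterating, every Heaviside occurring in $\LL_0^m(X H_{c_n})$ sits at some $c_j$, with weight obtained from the single jump of height $s_1$ at $c_1$ by the cocycle $1/(f^{k-1})'(c_1)$ -- the relations $s_k=f'(c_k)s_{k+1}$ already used in the proof of Lemma~\ref{nice}. Pairing these Heavisides with $\varphi'$ brings in the values $\varphi(c_j)$, and re-indexing the double sum (iteration count, generation of the jump) by $(j,k)$ with $j-k$ the iteration count produces the first term of (\ref{chsusceptibility}); the $BV^{(1)}$ debris shed at each step reassembles, again via (\ref{keye}), into $-\int(\id-z\LL_1)^{-1}(X'\rho_{sal})\,\varphi\,dx$. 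This bookkeeping is the content of \cite[Lemma~4.1, Prop.~4.4]{Ba}, and it is the step I expect to be the main obstacle, since one must control infinitely many propagating jumps and verify that all rearrangements converge for $|z|<1$.

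The holomorphy statements all reduce to $\JJ(f,X)=0$ cancelling the only obstruction at $z=1$. Using $\int X'H_{c_n}\,dx=-X(c_n)$ (from $H_{c_n}=-1$ on $(-\infty,c_n)$, $0$ beyond, $\widetilde X(-\infty)=0$) and $\int(X\rho_{reg})'\,dx=0$, one gets $\int\bigl(X'\rho_{sal}+(X\rho_{reg})'\bigr)\,dx=-\JJ(f,X)=0$. Since, by Lemma~\ref{sp} and $\LL_1^*\mathrm{Leb}=\mathrm{Leb}$, the spectral projector of $\LL_1$ at its unique peripheral eigenvalue $1$ is $\psi\mapsto\rho_0\int\psi\,dx$, the vector $X'\rho_{sal}+(X\rho_{reg})'$ lies in the complementary $\LL_1$-invariant subspace, on which $\id-z\LL_1$ is invertible near $\{|z|\le1\}$ (essential spectral radius $\le\lambda_0$, no peripheral eigenvalue besides $1$); hence $-\int(\id-z\LL_1)^{-1}(X'\rho_{sal}+(X\rho_{reg})')\,\varphi\,dx$ is holomorphic across $z=1$, the precise radius $\lambda_0^{-1}$ coming from \cite[Thm~5.2]{Ba}. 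When $c$ is periodic or preperiodic, the postcritical orbit is eventually periodic, so the coefficients of the first term of (\ref{chsusceptibility}) are eventually periodic in $j$ and the sum is a rational function of $z$; its potential pole at $z=1$ has residue proportional to the sum over one period of the $s_1X(c_k)/(f^{k-1})'(c_1)$, which equals $\JJ(f,X)=0$ (using $s_k=s_1/(f^{k-1})'(c_1)$ and $N_f=n_1$ in the periodic case), so it is holomorphic at $z=1$.

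Finally, $\Psi_1$ in (\ref{psi1}) is a well-defined number: its second term is the $z=1$ value just shown to exist, and its first term is a finite sum when $c$ is periodic ($M_f=n_1<\infty$). When $c$ is not periodic, $M_f=\infty$, and Lemma~\ref{nice} gives $\sum_{k\ge1}\frac{s_1X(c_k)}{(f^{k-1})'(c_1)}=s_1\bigl(X(c_1)-\alpha_{(0)}(c_1)\bigr)=\JJ(f,X)=0$; therefore $\sum_{k=1}^{j}\frac{s_1X(c_k)}{(f^{k-1})'(c_1)}=-\sum_{k>j}\frac{s_1X(c_k)}{(f^{k-1})'(c_1)}$ decays geometrically in $j$, because $|f'|>1$ forces $|(f^{k-1})'(c_1)|$ to grow geometrically; since $\varphi$ and $X$ are bounded, the series $\sum_{j\ge1}\varphi(c_j)\sum_{k=1}^{j}\frac{s_1X(c_k)}{(f^{k-1})'(c_1)}$ converges absolutely. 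The extension to periodic $c$ not covered in \cite{Ba} is the routine modification indicated in the footnote above.
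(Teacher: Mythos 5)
Your proof is correct and takes essentially the same route as the paper, whose proof of this proposition is a direct citation of \cite[Lemma~4.1, Prop.~4.4, Thm~5.2]{Ba} combined with Proposition~\ref{decc} and the $s_k$ relations from Lemma~\ref{nice}; you reconstruct the bookkeeping behind those citations (jump propagation under $\LL_0$, the intertwining (\ref{keye}) on the $BV^{(1)}$ part) and explicitly cite \cite{Ba} where appropriate. The two observations you supply to close the argument --- that $\int(X'\rho_{sal}+(X\rho_{reg})')\,dx=-\JJ(f,X)=0$ kills the component on the peripheral eigenvector, and that $\JJ(f,X)=0$ together with Lemma~\ref{nice} forces the tail $\sum_{k>j}s_1X(c_k)/(f^{k-1})'(c_1)$ to decay geometrically --- are exactly what is needed, including in the periodic case (the paper's footnote).
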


Note that if $\JJ(f,X)=0$ (a codimension one condition on $X$) then
$\Psi_1=\Psi_1(\varphi)$ is well-defined even if $\varphi$ is only continuous.

\smallskip
We have the following simpler expression for the first term
of $\Psi_1$:

\begin{lemma}\label{reallynice}
Let $f$ be  a mixing
piecewise expanding $C^3$ unimodal map. 
Let $X \in C^2(f(I))$ satisfy $X(a)=0$ and
$\JJ(f,X)=0$, and let $\varphi \in C^1([a,b])$.
Then, setting
$\alpha=\alpha_{(0)}$ from (\ref{aalpha}) for $f$ and $v=X \circ f$,
\begin{equation}\label{nicer}
\Psi_1=
-\int \alpha  \varphi \, \rho'_{sal}-
\int (\id- \LL_1)^{-1} (X' \rho_{sal}+(X\rho_{reg})') \varphi \, dx
\, .
\end{equation}
\end{lemma}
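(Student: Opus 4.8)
The plan is to compare the claimed identity (\ref{nicer}) with the formula (\ref{psi1}) for $\Psi_1$ supplied by Proposition~\ref{candidate}. The term $-\int (\id-\LL_1)^{-1}(X'\rho_{sal}+(X\rho_{reg})')\,\varphi\,dx$ appears verbatim in both, so it suffices to prove
$$-\int \alpha\,\varphi\,\rho_{sal}' \;=\; -\sum_{j=1}^{M_f}\varphi(c_j)\sum_{k=1}^{j}g_k\,,$$
where $g_k:=s_1\,X(c_k)/(f^{k-1})'(c_1)$. First I would rewrite the left-hand side: by (\ref{dec0}) we have $\rho_{sal}'=\sum_{n=1}^{N_f}s_n\,\delta_{c_n}$, a sum of point masses, and $\alpha=\alpha_{(0)}$ is bounded (Lemma~\ref{easyl}); hence $-\int \alpha\,\varphi\,\rho_{sal}'=-\sum_{n=1}^{N_f}s_n\,\alpha(c_n)\,\varphi(c_n)$, and the goal reduces to $\sum_{n=1}^{N_f}s_n\,\alpha(c_n)\,\varphi(c_n)=\sum_{j=1}^{M_f}\varphi(c_j)\sum_{k=1}^{j}g_k$.

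The second ingredient is a closed expression for $\alpha$ on the postcritical orbit. Since $\JJ(f,X)=0$ and $s_1<0$, Lemma~\ref{nice} gives $\alpha(c_1)=X(c_1)$. By Lemma~\ref{easyl}, $\alpha=\alpha_{(0)}$ satisfies the twisted cohomological equation (\ref{tceeq}) for $v=X\circ f$; evaluating it at $c_n$ for $1\le n\le M_f-1$ (legitimate since $c_n\neq c$ for such $n$) gives $\alpha(c_{n+1})=X(c_{n+1})+f'(c_n)\,\alpha(c_n)$, and unfolding this recursion from $n=1$ yields
$$\alpha(c_j)=(f^{j-1})'(c_1)\sum_{k=1}^{j}\frac{X(c_k)}{(f^{k-1})'(c_1)}=\frac{(f^{j-1})'(c_1)}{s_1}\sum_{k=1}^{j}g_k\,,\qquad 1\le j\le M_f\,.$$

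When $c$ is not preperiodic we have $N_f=M_f$, and the relation $s_n=f'(c_n)s_{n+1}$ used in the proof of Lemma~\ref{nice} telescopes to $s_n=s_1/(f^{n-1})'(c_1)$ for $1\le n\le N_f$; thus $s_n\,\alpha(c_n)=\sum_{k\le n}g_k$, and multiplying by $\varphi(c_n)$ and summing over $n$ concludes this case (the two sums being absolutely convergent when $N_f=\infty$, since $\rho_{sal}\in BV$ forces $\sum_n|s_n|<\infty$). When $c$ is preperiodic, $M_f=\infty$, but horizontality of $X\circ f$ — equivalent to $\JJ(f,X)=0$ — means $\sum_{k\ge1}g_k=0$, so the partial sums $T_j:=\sum_{k\le j}g_k$ obey $T_{j+n_1}=T_j/(f^{n_1})'(c_{n_0})$ for $j\ge n_0$ and decay geometrically; hence $\sum_{j\ge1}\varphi(c_j)T_j$ converges absolutely and I would group its terms by the finitely many values of $c_j$. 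For $m<n_0$ only $j=m$ occurs; for $n_0\le m\le N_f$ one has $\{j:c_j=c_m\}=\{m+\ell n_1:\ell\ge0\}$, contributing $\varphi(c_m)\sum_{\ell\ge0}T_{m+\ell n_1}=\varphi(c_m)\,T_m/\bigl(1-1/(f^{n_1})'(c_{n_0})\bigr)$. Feeding in the values $s_m=s_1/(f^{m-1})'(c_1)$ for $m<n_0$ and $s_m=\bigl(s_1/(f^{m-1})'(c_1)\bigr)\cdot\bigl(1-1/(f^{n_1})'(c_{n_0})\bigr)^{-1}$ for $n_0\le m\le N_f$ (again from the proof of Lemma~\ref{nice}), together with the closed formula for $\alpha(c_j)$, each group contributes exactly $\varphi(c_m)\,s_m\,\alpha(c_m)$, so the total is $\sum_{n=1}^{N_f}s_n\,\alpha(c_n)\,\varphi(c_n)$, as required.

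The only substantial step is the preperiodic case, and its success hinges on the identity $\sum_{k\ge1}g_k=0$: this simultaneously makes the series defining the first term of $\Psi_1$ absolutely convergent and forces the geometric series $\sum_{\ell\ge0}T_{m+\ell n_1}$ to collapse to $T_m/(1-1/(f^{n_1})'(c_{n_0}))$, producing precisely the factor $(1-1/(f^{n_1})'(c_{n_0}))^{-1}$ that distinguishes the saltus coefficient $s_m$ from $s_1/(f^{m-1})'(c_1)$ on the periodic part of the orbit.
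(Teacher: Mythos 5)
Your proof is correct and takes essentially the same route as the paper's: both rewrite the coefficient of $\varphi(c_j)$ in the first term of (\ref{psi1}) as $s_1\alpha(c_j)/(f^{j-1})'(c_1)$ (the paper via (\ref{forl}), you via unwinding the TCE recursion from $\alpha(c_1)=X(c_1)$, which amount to the same identity), and then match against $-\int\alpha\varphi\,\rho'_{sal}=-\sum_n s_n\alpha(c_n)\varphi(c_n)$ using the explicit formulas for $s_n$. You simply carry out the preperiodic regrouping explicitly, where the paper's terse proof leaves it to the reader.
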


\begin{proof}
By Lemma~\ref{nice} $X(c_1)=\alpha(c_1)$. Thus, by (\ref{forl})
the first term of $\Psi_1$ from (\ref{psi1}) may be rewritten as a Stieltjes integral
($\alpha$ is continuous by Corollary \ref{horiz})
\begin{align}\label{ST1}
-\nonumber s_1 \sum_{j=1}^{M_f}  \varphi(c_j) 
\biggl (X(c_1)-\alpha(c_1)+\frac{\alpha(c_j)}{(f^{j-1})'(c_1)} \biggr )
&= -s_1 \sum_{j=1}^{M_f} \varphi(c_j) \frac{\alpha(c_j)}{(f^{j-1})'(c_1)}\\
 &=-\int \alpha  \varphi \, \rho'_{sal}\, .
\end{align}
\end{proof}

In fact, $\Psi_1$ is well-defined only if $\JJ(f,X)=0$:

\begin{proposition}\label{nono}
Let $f$ be a mixing piecewise expanding
$C^3$ unimodal map $f$, let $X \in C^2(f(I))$
satisfy $X(a)=0$. 
For every $\varphi \in C^0([a,b])$ the following series converges
$$- \sum_{j=1}^\infty
\int  \LL_1^{j} ((X\rho_{reg})')(x) \varphi (x)\, dx\, .
$$ 
If $\JJ(f,X)\ne 0$ then  $\Psi_1$
is not well-defined, in the following sense:
There exists $\varphi \in C^\infty([a,b])$ so that,
on the one hand, both series below
diverge
\begin{equation}\label{nor}
-\sum_{j=1}^{\infty} \varphi(c_j)  \sum_{k=1}^{\min(j,M_f)}     \frac {s_1 X(c_k)}{(f^{k-1})'(c_1)}
- \sum_{j=1}^\infty
\int  \LL_1^{j} (X' \rho_{sal})(x) \varphi (x)\, dx \, ,
\end{equation}
and on the other hand, the following series diverges
\begin{equation}\label{allt}
-\sum_{j=1}^{\infty} \biggl (\varphi(c_j)  \sum_{k=1}^{\min(j,M_f)}    \frac {s_1 X(c_k)}{(f^{k-1})'(c_1)}
+
\int  \LL_1^{j} (X' \rho_{sal})(x) \varphi (x)\, dx \biggr )\, .
\end{equation}
\end{proposition}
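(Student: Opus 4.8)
The plan is to reduce every series in the statement to the asymptotic behaviour of its general term, using the spectral gap of $\LL_1$ on $BV$ together with two elementary identities. First I would record, integrating by parts (legitimate once $X$ is replaced by the truncation $\widetilde X$ introduced before Proposition~\ref{candidate}, so that all functions involved are supported in $[a,b]$), that $\int X'\rho_{sal}\,dx=-\sum_{n=1}^{N_f}s_nX(c_n)=-\JJ(f,X)$, and, from the recursions for the jumps $s_k$ recalled in the proof of Lemma~\ref{nice} (in the preperiodic case after summing a geometric series over each period), that $\sum_{k=1}^{M_f}\frac{s_1X(c_k)}{(f^{k-1})'(c_1)}=\JJ(f,X)$, with the partial sums $S_j:=\sum_{k=1}^{\min(j,M_f)}\frac{s_1X(c_k)}{(f^{k-1})'(c_1)}$ converging to $\JJ(f,X)$ geometrically (since $|(f^{k-1})'(c_1)|\ge(\inf|f'|)^{k-1}$; and $S_j=\JJ(f,X)$ exactly for large $j$ when $c$ is periodic). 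By Lemma~\ref{sp}, $\LL_1$ on $BV$ has $1$ as a simple eigenvalue with spectral projector $\varphi\mapsto\rho_0\int\varphi\,dx$ and all remaining spectrum in the disc of radius $\tau_0<1$; these are all the ingredients needed.

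For the convergence assertion, since $\rho_{reg}\in BV^{(1)}$ (Proposition~\ref{decc}) and $X\in C^2$ with $X(a)=0$, the function $(X\rho_{reg})'=X'\rho_{reg}+X\rho'_{reg}$ lies in $BV$ and is supported in $[a,b]$, so it has zero integral and hence belongs to the $\LL_1$-invariant complement of $\complex\rho_0$, on which $\LL_1$ has spectral radius $\tau_0$. Therefore $\|\LL_1^j((X\rho_{reg})')\|_{BV}\le C\tau^j$ and $|\int\LL_1^j((X\rho_{reg})')\,\varphi\,dx|\le C'\|\varphi\|_{C^0}\tau^j$, which is summable for every $\varphi\in C^0([a,b])$.

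Now suppose $\JJ:=\JJ(f,X)\ne0$. Because $\int(X'\rho_{sal}+\JJ\rho_0)\,dx=-\JJ+\JJ=0$, the same spectral argument gives $\LL_1^j(X'\rho_{sal})=-\JJ\rho_0+O_{BV}(\tau^j)$, whence $\int\LL_1^j(X'\rho_{sal})\,\varphi\,dx=-\JJ\int\varphi\rho_0\,dx+O(\|\varphi\|_{C^0}\tau^j)$ for every $\varphi\in C^0$. Combining this with $S_j\to\JJ$ geometrically and the boundedness of $\varphi$, the general term of the first series in (\ref{nor}) is $-\varphi(c_j)S_j=-\JJ\varphi(c_j)+O(\tau^j)$, that of the second series is $-\int\LL_1^j(X'\rho_{sal})\varphi\,dx=\JJ\int\varphi\rho_0\,dx+O(\tau^j)$, and that of (\ref{allt}) is $-\bigl(\varphi(c_j)S_j+\int\LL_1^j(X'\rho_{sal})\varphi\,dx\bigr)=-\JJ\bigl(\varphi(c_j)-\int\varphi\rho_0\,dx\bigr)+O(\tau^j)$. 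Thus each of these three series diverges as soon as the corresponding bracketed quantity fails to tend to $0$ along some subsequence.

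It remains to exhibit a single $\varphi\in C^\infty([a,b])$ with $\varphi(c_j)\not\to0$, $\int\varphi\rho_0\,dx\ne0$, and $\varphi(c_j)\not\to\int\varphi\rho_0\,dx$; this is the one genuinely delicate point, and I expect it to be the main obstacle. (The naive choice $\varphi\equiv1$ makes the two series in (\ref{nor}) diverge but not (\ref{allt}), precisely because there the two leading contributions cancel, $\JJ+\int X'\rho_{sal}\,dx=\JJ-\JJ=0$; so one cannot avoid using finer information about the postcritical orbit.) The point is that $\mu_0=\rho_0\,dx$ is absolutely continuous, hence non-atomic, while any weakly convergent sequence of Dirac masses converges to a Dirac mass; therefore $\delta_{c_j}$ does not converge weakly to $\mu_0$, so, by density of $C^\infty$ in $C^0$, some $\varphi_0\in C^\infty([a,b])$ has $\varphi_0(c_j)\not\to\int\varphi_0\rho_0\,dx$, and replacing $\varphi_0$ by $\varphi_0+C$ for all but finitely many real constants $C$ yields the desired $\varphi$. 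For such a $\varphi$, $\varphi(c_j)\not\to0$ forces the first series of (\ref{nor}) to diverge, $\int\varphi\rho_0\,dx\ne0$ the second, and $\varphi(c_j)\not\to\int\varphi\rho_0\,dx$ the series (\ref{allt}); since $\varphi\in C^\infty$, this proves that $\Psi_1$ is ill-defined in the stated sense.
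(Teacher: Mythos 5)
Your proof is correct, and it follows the same overall strategy as the paper: use the spectral gap of $\LL_1$ on $BV$ (with the identities $\int X'\rho_{sal}\,dx=-\JJ(f,X)$ and $S_j\to\JJ(f,X)$ geometrically) to show that, up to an $O(\tau^j)$ error, the $j$-th terms of the three series are respectively $-\JJ\varphi(c_j)$, $\JJ\int\varphi\rho_0\,dx$, and $-\JJ\bigl(\varphi(c_j)-\int\varphi\rho_0\,dx\bigr)$, and then pick $\varphi$ so that none of these general terms tends to zero. The one place you differ from the paper is the final choice of $\varphi$, and here your argument is actually more robust. The paper takes $\varphi\in C^\infty$ with $\inf_j\varphi(c_j)>\int\varphi\rho_0\,dx>0$; but if the postcritical orbit is dense in $[c_2,c_1]$ (the generic non-Markov situation), $\inf_j\varphi(c_j)=\min_{[c_2,c_1]}\varphi\le\int\varphi\rho_0\,dx$ for every continuous $\varphi$, with equality only for constants, so no such $\varphi$ exists; the paper's construction tacitly needs the orbit closure to be a proper subset. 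Your observation that $\delta_{c_j}$ cannot converge weakly to the non-atomic $\mu_0$ (since a weak limit of Dirac masses is a Dirac mass), combined with density of $C^\infty$ in $C^0$ and a shift by a generic constant to also secure $\varphi(c_j)\not\to 0$ and $\int\varphi\rho_0\,dx\ne0$, yields a valid $\varphi$ in all cases, including dense orbits, and thus closes a small gap in the published argument.
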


\begin{proof}
Since $\int ((X\rho_{reg})')(x)\, dx=0$, the proof of \cite[Proposition 4.4]{Ba},
implies
$$\bigl |-\int  \LL_1^{j} ((X\rho_{reg})')(x) \varphi (x)\, dx \bigr |\le C \tau^j\, ,
$$
which
gives the first claim.

To fix ideas assume that $\JJ(f,X)> 0$. Recalling Lemma~\ref{nice}, note that
if $c$ is not periodic, then for each $j$
\begin{equation}\label{forl}
\sum_{k=1}^j s_k X(c_k)= X(c_1)-\alpha_{(0)}(c_1)+
\frac{\alpha_{(0)}(c_j)}{(f^{j-1})'(c_1)}=\JJ(X,f)+
\frac{\alpha_{(0)}(c_j)}{(f^{j-1})'(c_1)}\, .
\end{equation}
By the proof of \cite[Proposition 4.4]{Ba},
$$
\bigl |-\int  \LL_1^{j} (X' \rho_{sal})(x) \varphi (x)\, dx
- \JJ(f,X)\int \varphi\rho_0\, dx \bigr |\le C \tau^j
\, ,
$$
thus if  $\int \varphi \rho_0\, dx > 0$
then the second term in (\ref{nor}) diverges to $+\infty$.
If $c$ is not periodic and, in addition, $\inf_j \varphi(c_j)> \int \varphi \rho_0\, dx>0$
then the first term diverges
to $-\infty$ (use (\ref{forl})). 
Finally, for the same $\varphi$, if $c$ is not
periodic then (\ref{allt}) is $\JJ(f,X) \sum_j (-\varphi(c_j)+\int \varphi\rho_0\, dx)$,
which clearly diverges to $-\infty$.
The case of periodic $c$ is similar.
\end{proof}

We end this section by discussing the relation between $\Psi(z)$ and
$\Psi_1$ when $\JJ(f,X)=0$:
If $c$ is preperiodic or periodic, $\Psi_1$ is just the value at $1$ of the 
holomorphic extension
of $\Psi(z)$, and we have $\Psi_1=\lim_{z \to 1} \Psi(z)$.
If $c$ is neither periodic nor
preperiodic we do not know if
the resummation $\Psi_1$
for the possibly divergent series $\Psi(1)$
is always Abelian, i.e., if $\Psi_1=\lim_{z\in (0,1),z\to 1}
 \Psi(z)$, but we have the following sufficient codimension-two
condition on $X$ ensuring abelianity:

\begin{proposition}\label{abelian}
Let $f$ be  a mixing
piecewise expanding $C^3$ unimodal map. 
Let $X \in C^2(f(I))$ satisfy $X(a)=0$,
$\JJ(f,X)=0$, and, in addition,
\begin{equation} \label{a2}  
\sum_{j=1}^{\infty}  \frac{j \ X(f^{j}(c_1))}{(f^ j)'(c_1))}=0
\end{equation}
then $\Psi_1=\lim_{z\in (0,1),z\to 1} \Psi(z)$.
\end{proposition}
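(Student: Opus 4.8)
The plan is to split $\Psi(z)$ as in Proposition~\ref{candidate} and treat the two pieces separately: the ``analytic'' piece by continuity, and the ``combinatorial'' piece by two successive summations by parts, in which the extra hypothesis (\ref{a2}) produces exactly the cancellation that is needed. If $c$ is periodic or preperiodic there is nothing to prove, as remarked just before the statement (the first term of (\ref{chsusceptibility}) is then rational and holomorphic at $z=1$, while the second term extends holomorphically past $z=1$); so I shall assume $c$ is neither periodic nor preperiodic, hence $M_f=N_f=\infty$. For $z\in(0,1)$, by (\ref{chsusceptibility}), write $\Psi(z)=g(z)+R(z)$ with
$$g(z)=-\sum_{j\ge 1}\varphi(c_j)\,S_j(z),\quad S_j(z):=\sum_{k=1}^{j}z^{j-k}a_k,\quad a_k:=\frac{s_1 X(c_k)}{(f^{k-1})'(c_1)},$$
and $R(z)=-\int(\id-z\LL_1)^{-1}\bigl(X'\rho_{sal}+(X\rho_{reg})'\bigr)\varphi\,dx$. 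By Proposition~\ref{candidate} the function $R(z)$ extends holomorphically to a disc of radius $>1$, so $R(z)\to R(1)$, the second term of (\ref{psi1}); thus it suffices to prove that $g(z)\to-\sum_{j\ge 1}\varphi(c_j)A_j$ as $z\to1^-$, where $A_j:=\sum_{k=1}^{j}a_k$, since this is by definition the first term of (\ref{psi1}) when $M_f=\infty$.

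First I would extract from the hypotheses the two facts driving the argument. Since $\JJ(f,X)=0$, Lemma~\ref{nice} (together with the relation $s_k=s_1/(f^{k-1})'(c_1)$ used in its proof) gives $\sum_{k\ge1}a_k=\JJ(f,X)=0$ and, by (\ref{forl}), $A_j=\alpha_{(0)}(c_j)/(f^{j-1})'(c_1)$; as $\alpha_{(0)}$ is bounded (Lemma~\ref{easyl}) and $|(f^{j-1})'(c_1)|\ge(\inf|f'|)^{j-1}$, we get $|A_j|\le C\nu^j$ with $\nu=(\inf|f'|)^{-1}\in(0,1)$. Next, given $\JJ(f,X)=0$, I claim that (\ref{a2}) is exactly the statement $\sum_{j\ge1}A_j=0$: summing, $\sum_{j=1}^{J}A_j=\sum_{k=1}^{J}(J-k+1)a_k=(J+1)A_J-\sum_{k=1}^{J}k\,a_k$, and $(J+1)A_J\to0$, so $\sum_{j\ge1}A_j=-\sum_{k\ge1}k\,a_k=-s_1\sum_{k\ge1}\tfrac{k\,X(c_k)}{(f^{k-1})'(c_1)}$; since $\sum_{k\ge1}\tfrac{X(c_k)}{(f^{k-1})'(c_1)}=\JJ(f,X)/s_1=0$, this equals $-s_1\sum_{k\ge1}\tfrac{(k-1)X(c_k)}{(f^{k-1})'(c_1)}=-s_1\sum_{j\ge1}\tfrac{j\,X(f^{j}(c_1))}{(f^{j})'(c_1)}$, which vanishes precisely under (\ref{a2}).

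Then comes the manipulation of $g(z)$. An easy induction on $j$ gives $S_j(z)=A_j-(1-z)\sum_{i=1}^{j-1}z^{j-1-i}A_i$, whence, for $z\in(0,1)$,
$$g(z)=-\sum_{j\ge1}\varphi(c_j)A_j+E(z),\qquad E(z):=(1-z)\sum_{j\ge1}\varphi(c_j)\sum_{i=1}^{j-1}z^{j-1-i}A_i,$$
the first sum being absolutely convergent by the geometric decay of $A_j$. Interchanging the (for $z<1$ absolutely convergent) double sum, $E(z)=(1-z)\sum_{i\ge1}A_i\,C_i(z)$ with $C_i(z):=\sum_{m\ge0}\varphi(c_{i+1+m})z^m$, so that $|C_i(z)|\le\|\varphi\|_\infty(1-z)^{-1}$ and $C_i(z)=\varphi(c_{i+1})+zC_{i+1}(z)$. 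Writing $A_i=R_{i-1}-R_i$ with $R_i:=\sum_{k>i}A_k$ (so $|R_i|\le C'\nu^i$ and, crucially, $R_0=\sum_{k\ge1}A_k=0$ by (\ref{a2})), summation by parts together with the recursion for $C_i$ yields
$$\sum_{i\ge1}A_i C_i(z)=\sum_{i\ge1}R_i\bigl(C_{i+1}(z)-C_i(z)\bigr)=(1-z)\sum_{i\ge1}R_iC_{i+1}(z)-\sum_{i\ge1}R_i\varphi(c_{i+1}),$$
the boundary term $R_NC_N(z)$ tending to $0$ since $R_N\to0$ geometrically while $C_N(z)$ is bounded for fixed $z<1$. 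Hence $E(z)=(1-z)^2\sum_{i\ge1}R_iC_{i+1}(z)-(1-z)\sum_{i\ge1}R_i\varphi(c_{i+1})$; the first term is $O(1-z)$ by $|C_{i+1}(z)|\le\|\varphi\|_\infty(1-z)^{-1}$, and the second is $(1-z)$ times an absolutely convergent series, so $E(z)\to0$. Therefore $g(z)\to-\sum_{j\ge1}\varphi(c_j)A_j$ and $\Psi(z)=g(z)+R(z)\to\Psi_1$.

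The heart of the argument, and the step I expect to be most delicate to carry out correctly, is the second paragraph together with the double Abel summation: one must recognize (\ref{a2}) as the identity $\sum_j A_j=0$, and then organize the two summations by parts so that every surviving contribution carries an extra factor $1-z$ (or is absolutely summable), which is exactly what is required to defeat the $(1-z)^{-1}$ growth of the resolvent-type inner sums $\sum_m\varphi(c_{i+1+m})z^m$. Without (\ref{a2}) one has $R_0=-\sum_{k\ge1}k\,a_k\neq0$, the telescoping in the first Abel summation fails, and $E(z)$ need not tend to $0$ — in keeping with the fact, noted above, that the resummation $\Psi_1$ is not known to be Abelian in general.
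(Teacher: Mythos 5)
Your proof is correct, but it is organized quite differently from the paper's. The paper introduces the auxiliary analytic function
$\alpha(c_\ell,z)=-\sum_{j\ge1}X(f^j(c_\ell))/(z^j (f^j)'(c_\ell))$, holomorphic for $|z|>(\min|f'|)^{-1}$, observes that $\JJ(f,X)=0$ means $\alpha(c_1,1)=X(c_1)$ and that (\ref{a2}) means $\partial_z\alpha(c_1,z)|_{z=1}=0$, so that $X(c_1)-\alpha(c_1,z)=O(|1-z|^2)$; it then rewrites the coefficient of $\varphi(c_j)$ in the first term of (\ref{chsusceptibility}) as $s_1 z^{j-1}\bigl(X(c_1)-\alpha(c_1,z)\bigr)-s_1\alpha(c_j,z)/(f^{j-1})'(c_1)$ and sums, using the $O(|1-z|^2)$ bound to beat the $(1-z)^{-1}$ size of $\sum_j\varphi(c_j)z^{j-1}$. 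You never introduce this generating function: you recognize that, given $\JJ(f,X)=\sum_k a_k=0$, condition (\ref{a2}) is precisely the second cancellation $\sum_j A_j=0$ (where $A_j=\sum_{k\le j}a_k$), and you then carry out two successive Abel summations, the first converting the partial sums $S_j(z)$ into $A_j-(1-z)\sum_i z^{j-1-i}A_i$ and the second using $R_0=\sum_j A_j=0$ to make the boundary term vanish and leave only $O(1-z)$ remainders. The two proofs encode the same two orders of cancellation; yours is more elementary and makes it very explicit where each hypothesis enters (the first cancellation lets one pass from $S_j(z)$ to $A_j$, the second tames the tail $E(z)$), whereas the paper's is more compact and conceptual, packaging both cancellations into the statement that $z\mapsto\alpha(c_1,z)$ has a double zero for $X(c_1)-\alpha(c_1,z)$ at $z=1$. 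One small side remark: the factor $s_1$ is lost in the paper's display (\ref{forl}), and you inherit that typo when writing $A_j=\alpha_{(0)}(c_j)/(f^{j-1})'(c_1)$ rather than $s_1\alpha_{(0)}(c_j)/(f^{j-1})'(c_1)$, but since you only use the geometric decay of $A_j$ this does not affect the argument.
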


\begin{proof}
We may assume that the critical point $c$ is not periodic, so that 
the following formal Laurent series is well-defined for $\ell\ge 1$:
$$
\alpha(c_\ell,z)= -\sum_{j= 1}^ \infty \frac{X(f^j(c_\ell))}{z^j (f^ j)'(c_\ell)}
\, .
$$
Clearly,
$z\mapsto \alpha(c_1,z)$
is analytic in $\{ z \in \complex \mid |z| \min |f'|> 1\}$. We have
\begin{equation}\label{derivative}
\partial_z \alpha(c_1,z)|_{z=1}= \sum_{j=1}^ \infty \frac{j \ X(f^{j}(c_1))}{(f^ j)'(c_1)}
\, .
\end{equation}
Note for further use that if 
$X(c_1) =\alpha(c_1,1)$ (which is equivalent to $\JJ(f,X)=0$ by Lemma~ \ref{nice})
and if (\ref{a2}) holds,
then (\ref{derivative}) implies
\begin{equation}\label{a3}
\alpha(c_1,z) = \alpha(c_1,1)+ O(|1-z|^2)\, .
\end{equation}

Now, using $\alpha(c_1,z)$, we may rewrite the coefficient of
$\varphi(c_j)$ in the first term of $\Psi(z)$ from Proposition~ \ref{candidate} as
\begin{align*}
s_1  z^{j-1}&
\sum_{k=1}^{j}\frac{X(c_k)} {z^{k-1}(f^{k-1})'(c_1)}\\ &=
 s_1  z^{j-1}\Big(   X(c_1) -\alpha(c_1,z) -   
\frac{1}{z^{j-1} (f^{j-1})' (c_1)}\sum_{m=1}^{\infty} \frac{X(c_{j+m})} {z^{m}(f^{m})'(c_{j})} 
\Big)  \\
 &=
 s_1  z^{j-1} \big( X(c_1) -\alpha(c_1,z)\big) -   
s_1 \frac{\alpha(c_j,z)}{(f^{j-1})' (c_1)} 
\, .
\end{align*}
Consequently, if $(\min |f'|)^ {-1} < |z| < 1$,
the first term of $\Psi(z)$ can be written as
\begin{align*}
\sum_{j=1}^{\infty}& \varphi(c_j) \sum_{k=1}^{j} z^{j-k}\frac{s_1X(c_k)}{(f^ {k-1})'(c_1)}\\
&= 
\sum_{j=1}^{\infty} \varphi(c_j) \Big[ s_1  z^{j-1} \big( X(c_1) -\alpha(c_1,z)\big) 
-   s_1 \frac{\alpha(c_j,z)}{(f^{j-1})' (c_1)}\Big] \\
&=s_1 \Big[   \big( X(c_1) -\alpha(c_1,z)\big) \sum_{j=1}^{\infty} \varphi(c_j) z^{j-1} 
-  \sum_{j=1}^{\infty}  \frac{\varphi(c_j)\ \alpha(c_j,z)}{(f^{j-1})' (c_1)}\Big] \, .
\end{align*}

It is easy to see that
$$\lim_{|z|< 1, z\rightarrow 1}   \sum_{j=1}^{\infty}  
\frac{\varphi(c_j)\ \alpha(c_j,z)}{(f^{j-1})' (c_1)}=  
\sum_{j=1}^{\infty}  \frac{\varphi(c_j)\ \alpha(c_j,1)}{(f^{j-1})' (c_1)}\, .$$
Note also that if $|z|< 1$ then 
$|\sum_{j=1}^{\infty} \varphi(c_j) z^{j-1} |\leq \frac{ \sup |\varphi|}{1-|z|}$.

Finally, (\ref{a3}) implies
$$|\big( X(c_1) -\alpha(c_1,z)\big) \sum_{j=1}^{\infty} \varphi(c_j) z^{j-1}|=
|\big( \alpha(c_1,1) -\alpha(c_1,z)\big) 
\sum_{j=1}^{\infty} \varphi(c_j) z^{j-1}|\leq C|z-1|
\, .
$$
Putting together the above estimates, we find using (\ref{forl})
\begin{align*}
\lim_{z\rightarrow 1^{-}}  \sum_{j=1}^{\infty} \varphi(c_j) 
\sum_{k=1}^{j} z^{j-k}\frac{s_1X(c_k)}{(f^ {k-1})'(c_1)}&=
-  s_1\sum_{j=1}^{\infty}  \frac{\varphi(c_j)
\alpha(c_j,1)}{(f^{j-1})' (c_1)}\\
&=
\sum_{j=1}^\infty \varphi(c_j) \sum_{k=1}^ j \frac{s_1 X(c_k)}{(f^ {k-1})'(c_1)}\, ,
\end{align*}
which immediately gives the claim.
\end{proof}

\section{Proof of the main theorem}
\label{four}

If $f_t$ is a $C^{2,2}$ perturbation of a  mixing
piecewise expanding $C^2$ unimodal map $f$ tangent to its topological class,
then Corollary~\ref{horiz} gives that the infinitesimal deformation
$v$ is horizontal. If $v=X \circ f$, Lemma~\ref{nice} thus implies that
$\JJ(f,X)=0$. Therefore, if $X \in C^2(f(I))$,
a candidate $\Psi_1$ for the derivative
is defined by Proposition ~\ref{candidate} and Lemma~\ref{reallynice}.
Our main result can now be stated:

\begin{theorem}\label{formula}
Let $f_t$ be a 
$C^{2,3}$ perturbation
of a  mixing piecewise expanding $C^3$ unimodal
map $f$ with infinitesimal
deformation $v= X\circ f$ such that $X \in C^2(f(I))$.
If $f_0$ is good and $f_t$ is tangent to
its topological class, or if $f_t=\tilde f_t$
lies in the topological class of $f_0$,
then $t\mapsto \rho_t\, dx$ from
$(-\epsilon,\epsilon)$  to Radon measures is differentiable at $0$,
and 
$$
\partial_t (\rho_t \, dx)|_{t=0}=- \alpha  \rho'_{sal}- (\id- \LL_1)^{-1} (X' \rho_{sal}+(X\rho_{reg})') \, .
$$
In particular, for any $\hat \varphi \in C^{0}([a,b])$, the map 
$\RR(t)=\int \hat \varphi \, \rho_t\, dx$ is differentiable
at $t=0$, and 
$\RR'(0)=\Psi_1(\hat \varphi)$.
\end{theorem}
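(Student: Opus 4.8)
The plan is to reduce, via Proposition~\ref{fromKL}, to the case of a family lying inside the topological class of $f_0$, and then to run a perturbative spectral analysis of the transfer operators on the spaces $\BB_0$, $\BB^{Lip}_0$ of Subsection~\ref{3.2}, the key device being an isometry $G_t$ that forces the jumps of all the relevant operators onto the single postcritical set $\{c_k\}$ of $f_0$. Concretely, if $f_t$ is only tangent to the topological class of the good map $f_0$, pick the $C^{2,2}$ family $\tilde f_t=h_t\circ f\circ h_t^{-1}$ furnished by the definition, with $h_t(c)=c$, $\sup_x|\tilde f_t(x)-f_t(x)|=O(t^2)$ and $\partial_t\tilde f_t|_{t=0}=v=X\circ f$; Proposition~\ref{fromKL} gives $\|\rho_t-\tilde\rho_t\|_{L^1(Leb)}=O(|t|^\xi)$ with $\xi>1$, so $|\int\hat\varphi\,(\rho_t-\tilde\rho_t)\,dx|=o(t)$ for every $\hat\varphi\in C^0([a,b])$, and it suffices to prove the theorem for $\tilde f_t$ (if $f_t=\tilde f_t$ already lies in the topological class this reduction is skipped, and $f_0$ need not be good). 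From now on $f_t=h_t\circ f\circ h_t^{-1}$, the postcritical points are $c_{k,t}=h_t(c_k)$, and by Proposition~\ref{TCE} the map $t\mapsto h_t(x)$ is $C^{1+Lip}$ uniformly in $x$, so $t\mapsto c_{k,t}$ is $C^{1+Lip}$ uniformly in $k$ with $\partial_tc_{k,t}|_{t=0}=\alpha(c_k)$, $\alpha$ the infinitesimal conjugacy of Corollary~\ref{horiz}. Let $G_t:\BB_t\to\BB_0$ (and $\BB^{Lip}_t\to\BB^{Lip}_0$) be the isometry which, on the model spaces of Subsection~\ref{3.2}, keeps the regular part and the jump coefficients $(u_k)$ but relocates each $H_{c_{k,t}}$ to $H_{c_k}$, and set $\widehat\LL_t:=G_t\,\LL_{1,t}\,G_t^{-1}$: this is bounded on $\BB_0$ and on $\BB^{Lip}_0$, with $\widehat\LL_0=\LL_1$, and since $G_t$ is an isometry the conclusions of Subsection~\ref{3.2} persist, so each $\widehat\LL_t$ has essential spectral radius $\le\lambda$ and, outside $\{|z|\le\tau\}$, only a simple eigenvalue at $z=1$. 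The sole purpose of $G_t$ is that all $\widehat\LL_t$ act on the fixed space $\BB_0$, as the Keller--Liverani machinery requires; it is not an intertwiner of the dynamics (relocating a jump changes its Lebesgue integral).

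Next, using the explicit form~(\ref{L1t}) of $\LL_{1,t}$, the uniform $C^{r-1}$ bound~(\ref{forweak}) on $f-f_t$, and the $C^{1+Lip}$-in-$t$ control, uniform in $x$, of $h_t$ (hence of $\psi_{t,\pm}$, of $f_t'$, and of the jump locations $c_{k,t}$) from Proposition~\ref{TCE}, one shows that $t\mapsto\widehat\LL_t$ is differentiable at $0$ \emph{from the stronger to the weaker space}, say $\|\widehat\LL_t-\LL_1-t\,\dot\LL\|_{\BB^{Lip}_0\to\BB_0}=O(t^2)$, with $\dot\LL=\dot\LL^{\mathrm{dyn}}+\dot\LL^{\mathrm{geo}}$. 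The \emph{dynamical} part comes from $\partial_tf_t|_{t=0}=X\circ f$ and is a combination of $\LL_1$ with multiplication and differentiation by $X$ (it reproduces, once paired against a test function, one step of Ruelle's formal series~(\ref{susceptibility})); the \emph{geometric} part comes from $\partial_tG_t|_{t=0}$, that is, from the motion $\partial_tc_{k,t}|_{t=0}=\alpha(c_k)$ of the jumps, and --- because $\alpha$ is merely continuous, never better --- is bounded into $\BB_0$ but not into $\BB^{Lip}_0$. This loss of exactly one derivative is the structural reason why the argument must live on the scale $\BB^{Lip}_0\subset\BB_0\subset BV$.

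Then, since $\rho_t$ is the normalised fixed point of $\LL_{1,t}$, the vector $G_t\rho_t\in\BB_0$ is the normalised eigenvector of $\widehat\LL_t$ at $1$, read off from $\PP_t:=\frac{1}{2\pi i}\oint_{|z-1|=\delta}(z-\widehat\LL_t)^{-1}\,dz$. Feeding the uniform Lasota--Yorke bounds of Subsection~\ref{3.2}, the weak-norm smallness $\|\widehat\LL_t-\LL_1\|_{\BB^{Lip}_0\to\BB_0}=O(t)$, and the differentiability just established into the (slightly extended) Keller--Liverani perturbation theory of Appendix~\ref{newKL}, one gets that $t\mapsto\PP_t(\rho_0)$, hence $t\mapsto G_t\rho_t$, is differentiable at $0$ into $\BB_0$ (this uses $\rho_0\in\BB^{Lip}_0$, from Proposition~\ref{decc}); differentiating the identities $\widehat\LL_t\PP_t=\PP_t$ and $\PP_t^2=\PP_t$ expresses $\partial_t(G_t\rho_t)|_{t=0}$ through $\dot\LL(\rho_0)$ and $(\id-\LL_1)^{-1}$ restricted off the one-dimensional eigenspace. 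Undoing $G_t$, $\partial_t(\rho_t\,dx)|_{t=0}=(\partial_tG_t^{-1}|_{t=0})(\rho_0)+\partial_t(G_t\rho_t)|_{t=0}$: the first term, by $\partial_tc_{k,t}|_{t=0}=\alpha(c_k)$ and $\rho_{sal}=\sum_ks_kH_{c_k}$, equals the Dirac combination $-\alpha\,\rho'_{sal}$ (exactly the Stieltjes computation~(\ref{ST1})), while the second, after cancelling the geometric contributions by means of the twisted cohomological equation satisfied by $\alpha$, collapses to the bounded-variation object $-(\id-\LL_1)^{-1}(X'\rho_{sal}+(X\rho_{reg})')$. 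This is the asserted formula, which exhibits $\partial_t(\rho_t\,dx)|_{t=0}$ as a genuine Radon measure; pairing it with $\hat\varphi\in C^0([a,b])$ and invoking Lemma~\ref{reallynice} yields $\RR'(0)=\Psi_1(\hat\varphi)$.

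The main obstacle is exactly the combination at the heart of the second and third steps above: choosing the isometry $G_t$ so that all $\widehat\LL_t$ live on one space \emph{and} $t\mapsto\widehat\LL_t$ is genuinely differentiable, with an $O(t^2)$ remainder, in the mixed norm $\BB^{Lip}_0\to\BB_0$ --- the geometric part $\dot\LL^{\mathrm{geo}}$, driven by the merely continuous $\alpha$, only barely lands in $\BB_0$, so the ``loss of one derivative'' is unavoidable and must be tracked at every stage --- and then upgrading the Keller--Liverani \emph{continuity} of spectral data to \emph{differentiability} of the eigenprojector, which is precisely what forces the slight extension of \cite{KL} in Appendix~\ref{newKL}.
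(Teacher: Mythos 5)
Your proposal follows the paper's proof quite closely: the reduction via Proposition~\ref{fromKL}, the use of the isometry $G_t=\Gamma_0\circ\Gamma_t^{-1}$ to place all transfer operators on the fixed space $\BB_0$, the $\BB_0^{Lip}\subset\BB_0$ scale to track the ``loss of one derivative'', the decomposition $\rho_t-\rho_0=(G_t\rho_t-\rho_0)+(\rho_t-G_t\rho_t)$ yielding the two terms $-\alpha\,\rho'_{sal}$ and the $(\id-\LL_1)^{-1}$-term, the dynamical/geometric split of the derivative of the conjugated operator, and the use of the twisted cohomological equation to simplify. All of this matches the paper's Steps~1--2.

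There is, however, a genuine gap in the operator-level estimates you invoke. You claim both $\|\widehat\LL_t-\LL_1\|_{\BB^{Lip}_0\to\BB_0}=O(t)$ and $\|\widehat\LL_t-\LL_1-t\dot\LL\|_{\BB^{Lip}_0\to\BB_0}=O(t^2)$. Neither holds in general. The regular part of $\widehat\LL_t\varphi-\LL_1\varphi$ has, by (\ref{Leibagain}), a derivative that contains terms $\varphi_{reg}'(\psi_{t,\pm})-\varphi_{reg}'(\psi_\pm)$. For $\varphi\in\BB^{Lip}_0$, $\varphi_{reg}'$ is merely $L^\infty$, and composing a bounded (non-BV) function with two nearby changes of variables only gives an $O(1)$, not $O(t)$, difference in $L^1$; hence the target $\BB_0$-norm of $\widehat\LL_t\varphi-\LL_1\varphi$ is $O(1)$, not $O(t)$. (The relevant argument, using \cite[Lemma~11]{Ke}, requires the translated function to be of \emph{bounded variation}, i.e., one more derivative than Lipschitz.) This is precisely why the paper is careful to prove only two weaker, but sufficient, statements: the $O(t)$-smallness (\ref{missarg}) in the \emph{weak} norm $|\cdot|_{weak,\infty}$, which tracks only the $L^\infty$ of the regular part and therefore sees only $\varphi_{reg}(\psi_{t,\pm})-\varphi_{reg}(\psi_\pm)$ (indeed $O(t)$ for Lipschitz $\varphi_{reg}$); and the $O(t^2)$ Taylor remainder (\ref{almostder}) \emph{for the single vector $\rho_0$}, exploiting the extra regularity $\rho_{reg}'\in\BB_0^{Lip}$ established after Proposition~\ref{decc}, so that the Keller lemma applies to $\rho_{reg}'$. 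Your version would have to be restricted to a vector with this additional smoothness, and the $O(t)$ bound must be measured in $|\cdot|_{weak,\infty}$, not in $\BB_0$.

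Relatedly, the conclusion you draw, that $t\mapsto\PP_t(\rho_0)=G_t\rho_t\,\nu_t(\rho_0)$ is differentiable \emph{into $\BB_0$}, is stronger than what the Keller--Liverani mechanism delivers: (\ref{limm}) gives only H\"older continuity $|\NN_t(\varphi)|_{weak,\infty}\le\widehat C|t|^\xi\|\varphi\|_{\BB_0^{Lip}}$ of the reduced resolvent in the weak norm, so the final differentiability (\ref{claim1}) of the eigenvector lives in $L^\infty(Leb)$ (equivalently, the Radon--measure topology stated in the theorem), not in $\BB_0$. The algebra of ``differentiating $\widehat\LL_t\PP_t=\PP_t$ and $\PP_t^2=\PP_t$'' is equivalent to the paper's resolvent-identity computation (\ref{idea}), so the route to the formula is the same; but the differentiability of the eigenvector must be obtained by combining (\ref{almostder}) (for $\rho_0$ only, in $\BB_0$) with (\ref{limm}) (Hölder in $weak,\infty$ from $\BB^{Lip}_0$), exactly as in (\ref{lllast}).
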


\begin{remark}\label{nonho}
See Theorem~\ref{nonlip} for necessity of the condition $\JJ(f,X)=0$
(which is equivalent to tangency to the topological class by Corollary ~\ref{horiz}).
\end{remark}

\begin{proof}[Proof of Theorem~ \ref{formula}]
Since 
$\tilde f_t= f_t$ if $f$ is not good,  we may assume  without loss of
generality by Proposition~\ref{fromKL}
that $\tilde f_t=f_t=h_t\circ f \circ h_t^{-1}$
for all $t$. Also, since each $\rho_t$ is a probability measure,
we may restrict to continuous functions $\hat \varphi$ so that $\int \hat \varphi\, d\rho_0=0$.
The proof will then be divided in three steps.

\smallskip
{\bf Step 1: Perturbation theory via resolvents.}

Recall the spaces $\BB_{t}=\Gamma_t(\widehat \BB)$ from Subsection~\ref{3.2},
for a fixed $\eta >0$,
and define  linear isometries $G_t = \Gamma_0 \circ \Gamma_t^{-1}: \BB_{t} \to \BB_{0}$.
We decompose
\begin{equation}\label{dec1}
\rho_t-\rho_0 = (G_t (\rho_t )- \rho_0) + (\rho_t - G_t(\rho_t))\, .
\end{equation}
The second term may be analysed directly, noting that (as Radon measures)
$$
\lim_{t \to 0}\frac{\rho_{t }- G_t (\rho_{t})}{t}=
\lim_{t \to 0}\frac{\rho_{sal,t }- \rho_{sal,t} \circ h_t}{t}=
 -\sum_{k =1} ^{N_f}  \alpha(c_k) s_k \delta_{c_k} =
-\alpha \rho'_{sal} \, . 
$$
(We used that $c_{k,t}=h_t(c_k)$ implies $H_{c_k}=H_{c_{k,t}} \circ h_t$ and that
$\lim_{t\to 0}s_{1,t}=s_1$\footnote{For this claim (which implies 
$\lim_{t\to 0} s_{k,t}= s_k$ for each fixed $k$), use that $\lim_{t\to 0}
\int \varphi \rho_t \, dx =
\int \varphi \rho\, dx$ for all bounded $\varphi$:
Since $\lim_{t\to 0}c_{1,t}= c_1$,
and $\sup_t \| \rho_{reg,t}\|_{Lip} <\infty$, while
$|s_{k,t}|\le C \lambda^k$ uniformly in $t$, choosing for
$\varphi$ the characteristic function of a sufficiently small neighbourhood of $c_1$,
we get a contradiction if $s_{1,t} \not\to s_1$.} 
.)
To study the first term in (\ref{dec1}), set  
$$
\PP_t=G_t\circ \LL_{1,t} \circ G_t^{-1}\, ,
\quad \widehat \QQ_t=\widehat \QQ_t(z)= z-\PP_t\, .
$$
(Of course $\PP_0=\LL_1$ and $\widehat \QQ_0=z-\LL_1$.)
The operator $\PP_t$ on $\BB_{0}$ is conjugated to $\LL_{1,t}$ on
$\BB_{t}$ and therefore has the same spectrum. The fixed point 
of $\PP_t$ is $G_t(\rho_t)$ and
the fixed point of $\PP_t^*$ is
$\nu_t(\varphi)=\int G_t^{-1}(\varphi)\, dx$. We denote by $\widehat \Pi_t(\varphi)=G_t(\rho_t)\nu_t(\varphi)$ the corresponding spectral projector.
Our strategy will be to use, as in Proposition~\ref{fromKL},
$$
\widehat \QQ_t^{-1}- \widehat\QQ_0^{-1}= \widehat \QQ_t^{-1} (\PP_t -\PP_0)
 \widehat\QQ_0^{-1}\, ,
$$
in order to write $G_t(\rho_t) \nu_t(\varphi_0)-\rho_0 \int \varphi_0 \, dx$ 
as a difference of spectral projectors
applied to $\varphi_0 \in \widetilde \BB_{0}$, where 
$$\widetilde \BB_{0}= 
\{\varphi \in \BB_0\mid \varphi'_{reg} \in  \BB^{Lip}_0\}
\mbox { with
the  norm }
\|\varphi'_{reg} \|_{ \BB^{Lip}_0}+ \|\varphi \|_{\BB_0}\, .
$$
In fact, we do not need to perform the spectral analysis of $\LL_1$ on 
$\widetilde \BB_{0}$,
since we shall work exclusively with $\rho_0 \in \widetilde \BB_{0}$
(the fact that $\rho_{reg}' \in \BB_0^{Lip}$, i.e., that all discontinuities
of $\rho_{reg}'$ lie on the postcritical orbit,  that the
jump at $c_k$ is  $O(\lambda^{k})$, and that $(\rho_{reg})'_{reg}\in Lip$
 is an easy consequence of
the proof of \cite[Proposition 3.3]{Ba}, noting in particular
the uniform bound for $\Delta'_n(x)$ there -- see also (\ref{star}) and (\ref{rels'})).

Since $\int \rho_0 \, dx=1$,
noting that $\widehat \QQ_0^{-1} (\rho_0)= \rho_0/(z-1)$, we
find
\begin{align}\label{idea}
G_t(\rho_t)\nu_t(\rho_0)-\rho_0 &=
- \frac{1}{2 i \pi} \oint  \frac{\widehat \QQ_t^{-1}(z)} {z-1} (\PP_t -\PP_0) (\rho_0)\, dz\\
\nonumber &=
(\id -\PP_t)^{-1} (\id-\widehat \Pi_t)(\PP_t-\PP_0) (\rho_0) \, ,
\end{align}
where the  contour is a circle centered at $1$,
outside of the disc of radius $\tau$.

We shall also use the following norms on $\BB_0$,
for $j \ge 0$
$$
|\varphi|_{weak,j} = \frac{\| \varphi_{reg} \|_{L^1(Leb)} }{2}
+\frac{ \max \{| \varphi_{reg} (y)| \mid y \in \cup_{0 \le \ell \le j} f^{-\ell}(c) \}}{2}
+ | \Gamma^{-1}(\varphi_{sal}) |_{\eta}\,  .
$$
We have 
$ |\varphi|_{weak, j} \le   \|\varphi\|_{\BB_0}$ for all $j \ge 0$.
It   is not  difficult to see
by adapting the estimates in Subsection~\ref{3.2} that there exist $\epsilon >0$ and $C\ge 1$ 
so that,  for  all  $|t|\le \epsilon$
all $j$, $\ell$, all  $\varphi \in \BB_0$,
\begin{equation}\label{hand0}
|\PP_t^j(\varphi)|_{weak, \ell}  \le C |\varphi|_{weak, \ell+j }
\, , \quad
\|\PP_t^j (\varphi )\| \le C \lambda^{j} \|\varphi\|
+C |\varphi|_{weak, j} \, .
\end{equation}
(Uniformity in $t$ of the constant $C$ in the Lasota-Yorke estimate
 follows from the fact that  $f$ is good. The reason why 
$\sup_{\ell \le j} |\varphi_{reg}(f^{-\ell}(c))|$
appears in the weak norm is to take into account 
the compact operators $\KK_0(\LL_1^j)$ from
the decomposition in \S~\ref{3.2}.)
We shall see in Step~3  that for any fixed $j\ge 0$
there is a modulus of continuity $\delta_j (t)\ge 0$ 
(i.e., $\limsup_{t\to 0} \delta_j(t)=0$)
so that for each $\varphi \in \BB_0$ 
\begin{equation}\label{missarg'}
|\PP_t(\varphi)-\PP_0(\varphi)|_{weak, j }\le \delta_j(t)  \|\varphi\|_{\BB_0} \, .
\end{equation}
Therefore, the proof of \cite[Theorem 1]{KL} 
(see Appendix~\ref{newKL}) gives $\epsilon_0>0$ so that
\begin{equation}\label{newKL'}
A_{\epsilon_0}:=
\sup_{|t|< \epsilon_0}\|(\id -\PP_t)^{-1}(\id-\widehat \Pi_t)\|_{\BB_0}< \infty \, .
\end{equation}
Beware that it is not clear
whether $|(\id -\PP_t)^{-1}(\id-\widehat \Pi_t)(\varphi)- (\id -\PP_0)^{-1}(\varphi)|_{weak,0}$
tends to zero uniformly in $\|\varphi\|_{\BB_0}\le 1$ as
$t\to 0$.
This is why we next consider $\PP_t$ acting on
$\BB_0^{Lip}$: By \S ~\ref{3.2}, the essential
spectral radius is $\le \lambda$, and the spectrum  outside of the
disc of radius $\tau$ consists in the eigenvalue
$1$, with  projector $\widehat \Pi_t$. We introduce
a weak norm  on $\BB_0^{Lip}$:
$$
|\varphi|_{weak,\infty} = \| \varphi_{reg} \|_{L^\infty(Leb)} +
 | \Gamma^{-1}(\varphi_{sal}) |_{ \eta}\,  .
$$
Applying again the argument in \S ~\ref{3.2}, we see that (\ref{hand0}) holds
for $\ell=\infty$. Clearly, $ |\varphi|_{weak, j}\le |b-a| |\varphi|_{weak, \infty}$. 
In Step ~3, we shall find $\widetilde C \ge 1$
so that for each $\varphi \in \BB_0^{Lip}$
\begin{equation}\label{missarg}
|\PP_t(\varphi)-\PP_0(\varphi)|_{weak, \infty }\le \widetilde
C |t|  \|\varphi\|_{\BB_0^{Lip}} \, .
\end{equation}
Then,  setting
$$
\NN_t=(\id -\PP_t)^{-1}(\id-\widehat \Pi_t)- (\id -\PP_0)^{-1}(\id-\widehat \Pi_0)\, ,
$$
(\ref{hand0}) and (\ref{missarg})
imply by \cite[Theorem~1, Corollary ~1]{KL} that 
there are $\widehat C\ge 1$ and  $\xi >0$   so that for each 
$\varphi \in \BB_{0}^{Lip}$
\begin{equation}\label{limm}
|\NN_t(\varphi) |_{weak, \infty}
\le \widehat C |t|^\xi \|\varphi\|_{\BB_0^{Lip}}\, .
\end{equation}

If we knew that there existed $\DD \in \BB_0^{Lip}$ so that
\footnote{We emphasize that the norm in (\ref{almostder}) is  in $\BB_0$, and a priori not
in $\BB_0^{Lip}$.}
\begin{equation}\label{almostder}
\|\PP_t(\rho_0)-\PP_0(\rho_0) - t \DD \|_{\BB_0}= O(t^2) \, ,
\end{equation}
uniformly in small $t$ (this will be shown in Step~2),
then  (\ref{idea}) and (\ref{limm}) would give
\begin{equation}\label{claim1}
\partial_t (G_t (\rho_t) \nu_t(\rho_0))|_{t=0} =
 (\id-\LL_1)^{-1} (\id-\widehat \Pi_0) (\DD) \, ,
 \end{equation}
in $L^\infty(Leb)$: Indeed,
write $(\id-\PP_t)^{-1}(\id - \widehat \Pi_t)= \NN_t +(\id-\PP_0)^{-1}(\id - \widehat \Pi_0)$
and note that (\ref{newKL'})
implies
\begin{align}\label{lllast}
G_t(\rho_t)\nu_t(\rho_0)-\rho_0 &=
(\NN_t +(\id-\PP_0)^{-1}(\id - \widehat \Pi_0))(t \DD + O_{\BB_0}(t^2)) \\
\nonumber &= t  \NN_t(\DD) +t(\id-\PP_0)^{-1}(\id - \widehat \Pi_0)(\DD)
+ A_{\epsilon_0} O(t^2) \, .
\end{align}
Dividing by $t$ and letting $t\to 0$, (\ref{limm}) gives the claim (\ref{claim1}).

Note that $t \mapsto \nu_t(\rho_0)$ is differentiable at $0$: As
$\nu_t(\rho_0)=\int \rho_{sal}\circ h_t^{-1}\, dx+ \int \rho_{reg}\, dx$,
one easily sees that $\partial_t \nu_t(\rho_0)|_{t=0}=-\sum_{k= 1}^{N_f}
\alpha(c_k) s_k$.
Then, by the Leibniz formula,
\begin{equation}\label{consLeib}
\partial_t (G_t \rho_t)|_{t=0}=
\partial_t (G_t (\rho_t) \nu_t(\rho_0))|_{t=0} -\rho_0 \, \partial_t( \nu_t(\rho_0))|_{t=0}  \, .
\end{equation}
Since our test functions satisfy $\int \hat \varphi\, d\rho_0=0$, we can ignore
scalar multiples of $\rho_0$, and
it only remains to show (\ref{missarg'}), (\ref{missarg}),
and (\ref{almostder}) with
\begin{equation}\label{magic}
(\id -\widehat \Pi_0)(\DD )
=-X' \rho_0-X\rho_{reg}' \, .
\end{equation}

\smallskip

{\bf Step 2: Analysing the derivative of $t\mapsto \PP_t(\rho_0)$.}

In this step, we prove (\ref{almostder}) and (\ref{magic}).
By definition, for any $\varphi \in \BB_{0}$
\begin{align}\label{encore}
\PP_t( \varphi )&=(\LL_{1,t}(\varphi_{sal}\circ h_t^{-1}+\varphi_{reg}))_{sal} \circ h_t
+(\LL_{1,t}(\varphi_{sal}\circ h_t^{-1}+\varphi_{reg}))_{reg}  \, .
\end{align}

From now on, we assume that the postcritical orbit is infinite, to fix
ideas. (The case of finite postcritical orbit is similar.)
Recall (\ref{spart'}).
Noting that $c_k> c$ if and only
if $c_{k,t}=f^k_t(c)>c$,  and writing $\varphi_{sal}=\sum_k u_k H_{c_k}$, the contribution
to $\PP_t(\varphi)-\PP_0(\varphi)$ from the first
term in the right-hand-side of (\ref{encore}), i.e.,
$(\PP_t(\varphi))_{sal}-\PP_0(\varphi)_{sal}$, is just
\begin{align}\label{III}
\sum_{k =2}^{N_f} &u_{k-1} \biggl (\frac {1}{f'_t(c_{k-1,t})} - \frac{1}{f'(c_{k-1})}\biggr )
H_{c_k}\\
\nonumber &+
(\varphi_{reg}(c)+ \sum_{c_k > c} u_k) \biggr (\frac{1}{f'_t(c_-)} -  \frac{1}{ f'(c_-)} -
 \frac{1}{f'_t (c_+)} + \frac{1}{ f' (c_+)} \biggl )H_{c_1}\, .
\end{align}

Next,  we
find  by  (\ref{rpart}) that the derivative of  the second term 
$((\PP_t(\varphi))_{reg}-\PP_0(\varphi)_{reg})$ 
of (\ref{encore}), which is an atomless measure,  coincides with
\begin{align}\label{lastone}
&(\LL_{1,t} (\varphi_{reg} ))'|_{(a,c_{1,t})}- (\LL_1 (\varphi_{reg}))'|_{(a,c_1)}\\
 &\nonumber\qquad + \sum_{k= 2, c_{k-1}>c}^{N_f} u_{k-1}
\bigl ((\LL_{1,t}(H_{c_{k-1,t}}))'|_{(c_{k,t},c_{1,t})} -
(\LL_1(H_{c_{k-1}}))'|_{(c_k,c_1)} \bigr )\\
&\nonumber\qquad + \sum_{k= 2}^{N_f} u_{k-1}
\bigl ((\LL_{1,t}(H_{c_{k-1,t}}))'|_{(a,c_{k,t})} -
(\LL_1(H_{c_{k-1}}))'|_{(a,c_k)} \bigr ) \, .
\end{align}

Put $\varphi=\rho_0$, and consider first (\ref{III}).
Note that $c_{k,t}=h_t(c_k)$. Write
$$
\frac{1}{f'_t(h_t(w))}-\frac{1}{f'(w)}=
\frac{f'(w)-f'_t(h_t(w))}{f'_t(h_t(w))f'(w)}\, ,
$$
and decompose
$f'(w)-f'_t(h_t(w))=f'(w)-f'_t(w)+ f'_t(w)-f'_t(h_t(w))$,
with $f'(w)-f'_t(w)=-t X'(f(w)) f'(w)+O(t^2)$,
and $f'_t(w)-f'_t(h_t(w))=-t f''_t(w)\alpha(w)+O(t^2)$.
Thus, we find, by using $(\LL_1(\rho))_{sal} =\rho_{sal}$
and  (\ref{tceeq}), that
\begin{align}\label{glu}
\nonumber \lim_{t\to0}&\frac
{ (\PP_t(\rho_0))_{sal}-(\rho_0)_{sal}}{t}=
-\sum_{k=1}^{N_f}  X'(c_k) s_k H_{c_k}
-\sum_{k= 2}^{N_f}\frac{\alpha(c_{k-1}) s_{k-1} f''(c_{k-1})}{(f'(c_{k-1}))^2} H_{c_k}\\
\nonumber &\qquad
=-\sum_{k=1}^{N_f}  X'(c_k) s_k H_{c_k}
+\sum_{k= 2}^{N_f}\frac{(X(c_k)-\alpha(c_{k})) 
s_{k-1} f''(c_{k-1})}{(f'(c_{k-1}))^3} H_{c_k}
\\
&\qquad=-(X' \rho)_{sal}+  \sum_{k=1}^{N_f} (X(c_k)-\alpha(c_k))E_k H_{c_k} \, ,
\end{align}
where  we used    $X(c_1)=\alpha(c_1)$ with
(the choice of $E_1$
will become clear later on)
\begin{align}\label{ddef}
E_k &=
\frac{s_{k-1} 
f''(c_{k-1})}{(f'(c_{k-1}))^3} \, , \,\, k\ge 2\, , \\
\nonumber  E_1&= \biggl (-
 \frac{\rho_{reg} (c)f''(c_-)}{(f'(c_-))^3}
+  \frac{\rho_{reg} (c)f''(c_+)}{(f'(c_+))^3}\biggr ) \\
\nonumber
&\qquad\qquad\qquad\qquad+\sum_{k \ge 2, c_{k-1}>c} s_{k-1}
\biggl (\frac{f'' (c_-)}{(f'(c_-))^3} -\frac{f'' (c_+)}{(f'(c_+))^3} \biggr )\, .
\end{align}

It will turn out essential to study
$((\rho_{reg})')_{sal}=\sum_{k=1}^{N_k} s'_k H_{c_k}$.
If $x\in [a,c_1)$ is not along the critical orbit we have
\begin{equation}\label{star}
(\rho_{reg})'(x)=(\rho_0)'(x)=(\LL_1(\rho_0))'(x)
=\sum_{f(y)=x} \frac{(\rho_{reg})'(y)}{|f'(y)|f'(y)}-
\frac{\rho_0(y) f''(y)}{|f'(y)| (f'(y))^2}\, .
\end{equation}
(We used $(\rho_{reg})'(y)=(\rho_0)'(y)$ if $y$ is not along the postcritical orbit.)
Taking the difference between
$(\rho_{reg})'(x)$ for $x\uparrow c_k$ and $x\downarrow c_k$, and
recalling $E_k$ from (\ref{ddef}),
we easily get from the previous identity that 
\footnote{If $c$ is periodic then $(\rho_{reg})'(c)$ may be undefined,
but $(\rho_{reg})'(c_\pm)$ are both defined.} 
\begin{equation}\label{rels'}
s'_k=E'_k-E_k\, , \mbox{ with }
E'_k=\frac{s'_{k-1}}{(f'(c_{k-1})^2)} \,, \, k\ge 2\, , \, \,
 E'_1=-\frac{(\rho_{reg})' (c)}{(f'(c_-))^2}
+ \frac{(\rho_{reg})' (c)}{(f'(c_+))^2}  \,  .
\end{equation}

We now consider $\lim_{t\to 0}\frac{1}{t}((\PP_t(\rho_0))_{reg}-(\rho_0)_{reg})'$.
We get two sorts of contributions to (\ref{lastone}):
For  
\begin{equation}\label{sstar}
x\in [\min(c_k, c_{k,t}), \max (c_k, c_{k,t})]
\mbox{ or }
x\in [\min(c_k, f_t(c_{k-1})), \max(c_k, f_t(c_{k-1}))]\, ,
\end{equation}
an atom may appear at $c_k$ in the limit,  we call such $x$
singular points.
For the other values of $x$, which we call
the regular points, the limit will be a function.
Recalling (\ref{ddef}) and (\ref{rels'}), 
we claim that the  contribution of the singular points to
$\lim_{t\to 0} ((\PP_t(\rho_0))_{reg}-(\rho_0)_{reg})|_{(a,b)})'/t$ is 
\begin{align}\label{gli}
\sum_{k=1}^{N_f}( \alpha(c_k) E_k 
-X(c_{k})E'_k )\delta_{c_k} \, .
\end{align}
Indeed,  if $ k\ge 2$ and $c_{k,t}<c_k$ and $c_{k-1}<c$, we must consider the Radon measure 
\begin{align*}
\varphi \mapsto& -\frac{s_{k-1}}{t} \int_{c_{k,t}}^{c_k}
\frac{f'' (\psi_-(x))}{(f'(\psi_-(x)))^3} \varphi(x) \, dx
= \alpha(c_{k})s_{k-1} \frac{f'' (c_{k-1})}{(f'(c_{k-1}))^3} 
\varphi(c_k)+  O(t)\, ,
\end{align*}
coming from $-(\LL_1(H_{c_{k-1}}))'$ (we used  $h_t(c_{k})=c_{k,t}$).
If $ k\ge 2$, $c_{k,t}<c_k$, and $c_{k-1}> c$, we must consider the Radon measure 
\begin{align*}
\varphi \mapsto& -\frac{s_{k-1}}{t} \int_{c_{k,t}}^{c_k}
\frac{f''_t (\psi_{t,+}(x))}{(f'_t(\psi_{t,+}(x)))^3} \varphi(x) \, dx
= \alpha(c_{k}) s_{k-1}\frac{f'' (c_{k-1})}{(f'(c_{k-1}))^3} 
\varphi(c_k)+  O(t) ,
\end{align*}
from  $(\LL_{1,t}(H_{c_{k-1,t}}))'-(\LL_1(H_{c_{k-1}}))'$
(the corresponding term for the branches $\psi_-$ and $\psi_{t,-}$ vanishes
in the limit).
For $k=1$ and $c_{1,t}<c_1$ we must  consider the three contributions given by,
firstly,
\begin{align*}
\varphi \mapsto&- \frac{1}{t} \int_{c_{1,t}}^{c_1}
\frac{(\rho_{reg})' (\psi_-(x))}{(f'(\psi_-(x)))^2} \varphi(x) \, dx
 = \alpha(c_1)
\frac{(\rho_{reg})' (c)}{(f'(c_-))^2} 
\varphi(c_1)+  O(t)\, ,
\end{align*}
(recall also that $c_{1,t}=h_t(c_1)$ and  $\alpha(c_1)=X(c_1)$), secondly,
\begin{align*}
\varphi \mapsto& \frac{1}{t} \int_{c_{1,t}}^{c_1}
\frac{\rho_{reg} (\psi_-(x))f''(\psi_-(x))}{(f'(\psi_-(x)))^3} \varphi(x) \, dx
 = \alpha(c_1)
\frac{-\rho_{reg} (c)f''(c_-)}{(f'(c_-))^3} 
\varphi(c_1)+  O(t)\, ,
\end{align*}
and thirdly, by the sum over those $j\ge 2$ so that
$c_{j-1} > c$
of
\begin{align*}
\varphi \mapsto&- \frac{s_{j-1}}{t} \int_{c_{1,t}}^{c_1}
\frac{f'' (\psi_-(x))}{(f'(\psi_-(x)))^3} \varphi(x) \, dx
=   \alpha(c_1) s_{j-1}
\frac{f'' (c_-)}{(f'(c_-))^3} 
\varphi(c_1)+  O(t)\, ,
\end{align*}
as well as the corresponding three contributions for $\psi_+$.
The  cases $c_{k,t} > c_k$ are similar.
For $k \ge 2$, we must also deal with
the jump terms  from  $(\LL_{1,t}(\rho_{reg}))'-(\LL_1(\rho_{reg}))'$
(one at $f_t(c_{k-1})$ the other at $c_k$), which give,
using $f_t(c_{k-1})-f(c_{k-1})=tX(c_k)+O(t^2)$:
\begin{align*}
\varphi \mapsto&\frac{1}{t} \int_{f_t(c_{k-1})}^{c_k}
\frac{s'_{k-1}}{(f'(c_{k-1}))^2}\varphi(x) \, dx
= -X(c_k)\frac{s'_{k-1}}{(f'(c_{k-1}))^2}\varphi(c_k)+O(t) \, .
\end{align*}

We move to the regular points: For small $t$,  let
$k_t \ge 2$ be so that $\sum_{k \ge k_t} |s_{k-1}| \le  t^2$
(clearly, $k_t=O(\ln|t|)$), and  take $I_t$ to be the union
of the $O(k_t)$ intervals of singular points associated to
$k \le k_t$ via (\ref{sstar}) 
(in particular, the Lebesgue measure of $I_t$ is an $O(t\ln|t|)$). 
We have by definition
\begin{equation}\label{imp}
\| (\PP_t(\rho_0))_{reg}-(\rho_0)_{reg} -
(\LL_{1,t}(\rho_0)- \LL_1(\rho_0))_{reg}\|_{\BB_0(I \setminus I_t)} = O(t^2) \, ,
\end{equation}
where $\|\phi_{reg} \|_{\BB_0(I \setminus I_t)}$ is the norm  of Radon 
measure $(\phi_{reg})'$ on the metric set $I \setminus I_t$.
(For this, we use that 
$
\sum_{k\ge k_t} |s_{k-1}|
\| \LL_{1,t}(H_{c_{k-1,t}})- \LL_{1,t}(H_{c_{k-1}})\|_{\BB_0}=O(t^2)\, ,
$
and  that $\LL_{1,t}(H_{c_{k-1,t}})(x)- \LL_{1,t}(H_{c_{k-1}})(x)=0$ for 
$k \le k_t$ and $x\notin I_t$.)
The  contribution (\ref{gli}) takes care of 
$\| (\PP_t(\rho_0))_{reg}-(\rho_0)_{reg}\|_{\BB_0(I_t)}$ 
(note that $\sum_{k\ge k_t} |\alpha(c_k) E_k |
+|X(c_{k})E'_k |=O(t^2)$) so that we may concentrate on
$(\LL_{1,t}(\rho_0)- \LL_1(\rho_0))_{reg}$ on $I \setminus I_t$.

Note that
\begin{equation}\label{bothtypes}
f^{-1}(x)-f^{-1}_t(x)= t\frac{X(x)}{f'(f^{-1}(x))}+ O(t^2)\, ,
\end{equation}
where we choose the same inverse branch for $f_t$ and $f$.
It follows that 
\begin{align*}
& \frac{\varphi(f^{-1}_t(x))}{|f'_t(f^{-1}_t(x))|}- \frac{\varphi(f^{-1}(x))}{|f'(f^{-1}(x))|}
 =-t X'(x)\frac{\varphi(f^{-1}(x))}{|f'(f^{-1}(x))|} \\
 &\qquad\quad- t X(x)
 \biggl ( \frac{\varphi'(f^{-1}(x))}{ f'(f^{-1}(x))  | f'(f^{-1}(x))|} +
 \frac{\varphi(f^{-1}(x)) f''(f^{-1}(x))}{(f'(f^{-1}(x)))^2| f'(f^{-1}(x))|}\biggr )
 +O(t^2)\, ,
\end{align*}
if $\varphi$ is $C^{1+Lip}$ at $f^{-1}(x)$,
which gives, after summing over the two inverse branches,
\begin{equation}\label{uselater}
-tX' (x)\LL_1(\varphi)(x)- t X(x) (\LL_1(\varphi))'(x) +O(t^2) \, .
\end{equation}
Therefore, if $x \notin I_t$, and 
$x \ne c_k$ and $x\ne c_{k,t}$ for all $k\ge 1$, we have,
decomposing $\rho_0=\rho_{reg}+ \sum_k s_k H_{c_k}$, 
\begin{align}\label{usenow}
\nonumber (\LL_{1,t}(\rho_0)-\LL_1(\rho_0))_{reg} (x)&=
-t(X' \rho_0-X(\rho_0)')_{reg}(x) +O(t^2)\\
&=
-t(X'\rho_0)_{reg}(x)-t(X(\rho_{reg})')_{reg}(x) +O(t^2)\, .
\end{align}
(The  $O(t^2)$ term is in $\BB_0$, not $\BB_0^{Lip}$.)
By continuity, (\ref{usenow}) holds for all $x \notin I_t$.

The regular contribution to $\lim_{t\to 0}
{ \bigl ((\PP_t(\rho_0))_{reg}-(\rho_0)_{reg} \bigr ) }/{t}$ is thus
\begin{align}\label{gla}
-\bigl (X' \rho_0 - (X' \rho_0)_{sal}\bigr )
-\bigl (X( \rho_{reg})'- (X (\rho_{reg})')_{sal} \bigr )\, .
\end{align}
All together, we find from (\ref{glu}--\ref{gli}--\ref{gla}) and (\ref{rels'})
(differentiating in $\BB_{0}$)
$$
\partial_t (\PP_t(\rho_0)) |_{t=0} =-X' \rho_{sal}- X' \rho_{reg} 
- X( \rho_{reg})' \in \BB_0^{Lip}\, .
$$
This establishes (\ref{almostder}) and (\ref{magic})
(note that $\int X' \rho_{sal} + (X \rho_{reg})' \, dx=0$).

\smallskip

{\bf Step 3: Proving the weak norm bounds necessary for \cite{KL}.}

It remains  to prove the bounds (\ref{missarg'}) and  (\ref{missarg})
for $\PP_t(\varphi)-\PP_0(\varphi)$. We start with (\ref{missarg'}). For the
term corresponding to (\ref{III}), since
$\varphi$ is not necessarily a fixed point of $\LL_1$, we get in addition to
(\ref{glu}) a term
$$
(|\varphi_{reg}(c)|+ \sum_{c_k > c} |u_k |) O(t)
= O( t) |\varphi|_{weak, 0} \, .
$$
Next, consider (\ref{lastone}).
For the $L^1(Leb)$ norm
of $(\PP_t-\PP)_{reg}$, the singular contributions produce an $O(t \ln |t|)$ term:
Indeed, by (\ref{cpct}), up to an error $O(t)$ we may restrict to a finite
set of $c_k$s, where the cardinality of this finite
set is an $O(\ln |t|)$; for this finite set,
the total Lebesgue measure of the intervals of singular points
is an $O(t \ln |t|)$. For the regular contributions, although $\LL_1(\varphi)$ is
not equal to $\varphi$ in general, 
and $\varphi_{reg}$ is only continuous
and of bounded variation, we get an $O(t) \| \varphi\|_{\BB_0}$ contribution
to the $L^1(Leb)$ norm
of $(\PP_t-\PP)_{reg}$: Indeed, the only delicate terms are 
of the form
\begin{align*}
&\int h(y) (\varphi_{reg}(\psi_{+,t}(y))-\varphi_{reg}(\psi_+(y))) \, dy \, ,
\end{align*}
with $|h|\le \|f\|_{C^{1+Lip}}$, and similarly with $\psi_-$.
Now we exploit that if $\phi \in BV$ and $\Psi_t$ is $C^2$
with $|\Psi_t(x)-x|\le C|t|$ and $| \Psi_t'(x)-1| \le C|t|$ then
(use \cite[Lemma 11]{Ke} as in \cite[Lemma 13]{Ke})
$$
\int |\phi(y)-\phi(\Psi_t(y))|\, dy =O( t )\|\phi\|_{BV} \, .
$$

We must still bound
$|\PP_t(\varphi)_{reg}(y)-\PP_0(\varphi)_{reg}(y)|$ 
for $y \in \SS_j= \cup_{0\le \ell \le j} f^{-\ell}(c)$. 
We make no distinction
between regular and singular points here.
The contribution corresponding
to differences between derivatives of $f$  of $f_t$  gives $O(t)$.
Next,  $\varphi_{reg}$ is continuous by definition of $\BB_0$. Writing
$\tilde \delta_j(\cdot)$ for its worse modulus of continuity
on the finite set $\SS_j$ , we get since $|c_k-c_{k,t}|=O(t)$
that
$$
\sup_{y \in \SS_j}
|\PP_t(\varphi)_{reg}(y)-\PP_0(\varphi)_{reg}(y)|
=O (\tilde \delta_j(t) + |t|) \, .
$$

Finally,  (\ref{missarg}) can be proved by using the Lipschitz assumption
on $\varphi_{reg}$, to
simplify the  argument  
for (\ref{missarg'}): The uniform modulus of continuity $\delta(t)=O(t)$ 
of $\varphi_{reg}$
allows us to deal with the $L^\infty$ norm in $|\cdot|_{weak,\infty}$.
\end{proof}


\section{The derivative  in terms of the infinitesimal conjugacy $\alpha$}
\label{elegg}

Let $f_t$ be a  $C^{2,2}$ perturbation tangent to the topological
class of a  mixing piecewise expanding $C^2$ unimodal map.
We do not know whether $x\mapsto h_t(x)$ is quasisymmetric, as in the
smooth expanding case. Note however that in general
it is  {\it not} absolutely continuous (see \cite{MaM} for the nonuniformly
expanding case).
For similar reasons,
$\alpha=\partial_t h_t|_{t=0}$ is in general not absolutely
continuous.   In this section, we shall see that absolute continuity
of $\alpha$ is equivalent to a remarkable formula for $\Psi_1=\RR'(0)$ which
can be ``guessed" from the following easy lemma:

\begin{lemma}\label{funny}
Assume that $f_t$ is a $C^{2,2}$ perturbation
tangent to the topological class of  a piecewise expanding
$C^2$ unimodal map 
$f$, with infinitesimal perturbation
$v=X\circ f$. Then recalling
$\alpha= \partial_t h_t|_{t=0}$ from  Corollary ~\ref{horiz}, we have 
\begin{equation}
(\id - \LL_0) (\alpha \rho_0) =X \rho_0\, ,
\end{equation}
and
$
\sum_{k=0}^n \LL_0^k(X \rho_0)=\alpha \rho_0-\LL_0^{n+1}(\alpha \rho_0) 
$.
\end{lemma}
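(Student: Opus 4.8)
The plan is to compute $\LL_0(\alpha\rho_0)$ directly, identify it with $(\alpha-X)\rho_0$, and then obtain the second identity as a telescoping sum. The only inputs needed are the twisted cohomological equation and $\LL_1\rho_0=\rho_0$.

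First I would collect the facts I need. By Corollary~\ref{horiz}, the function $\alpha=\partial_t h_t|_{t=0}$ is bounded, continuous, and solves the TCE~(\ref{tceeq}) for $v=X\circ f$, i.e.\ $X(f(x))=\alpha(f(x))-f'(x)\alpha(x)$ for every $x\ne c$. Since $\alpha$ is bounded and $\rho_0\in BV$, the product $\alpha\rho_0$ is a bounded function, so $\LL_0(\alpha\rho_0)$ is well defined (in $BV/\!\sim$, or merely as a bounded function modulo countable sets), while $X\rho_0\in BV$ because $X\in C^2$. Then I would fix $x\le f(c)$ with $x\ne f(c)$, so that both $y_\pm:=\psi_\pm(x)$ differ from $c$ and the TCE applies at $y_\pm$: with $f(y_\pm)=x$ it gives $\alpha(x)-X(x)=f'(y_\pm)\alpha(y_\pm)$, hence $\alpha(\psi_\pm(x))=(\alpha(x)-X(x))/f'(\psi_\pm(x))$. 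Plugging this into the definition~(\ref{L0}) of $\LL_0$, and using $\psi_+'(x)=1/f'(\psi_+(x))=|\psi_+'(x)|$ (since $f'>0$ on $I_+$) together with $-1/f'(\psi_-(x))=-\psi_-'(x)=|\psi_-'(x)|$ (since $f'<0$ on $I_-$), the common factor $\alpha(x)-X(x)$ pulls out and what remains is exactly $\LL_1(\rho_0)(x)$ as in~(\ref{L1}). Since $\LL_1\rho_0=\rho_0$, this yields $\LL_0(\alpha\rho_0)(x)=(\alpha(x)-X(x))\rho_0(x)$. For $x>f(c)$ both sides vanish ($\chi(x)=0$ and $\rho_0(x)=0$), and $x=f(c)$ is a single point, irrelevant in $BV/\!\sim$, so $\LL_0(\alpha\rho_0)=\alpha\rho_0-X\rho_0$, which is the first identity after rearranging.

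For the second identity I would apply $\LL_0^k$ to $(\id-\LL_0)(\alpha\rho_0)=X\rho_0$ and sum over $0\le k\le n$, so that the left-hand side telescopes to $\alpha\rho_0-\LL_0^{n+1}(\alpha\rho_0)$. The only point requiring a little care is the sign bookkeeping on the two inverse branches (the minus sign in $\LL_0$ on the $\psi_-$ term must match the negative sign of $\psi_-'$) and the handling of the exceptional set where the pointwise identity might fail; neither is a genuine obstacle. Indeed there is essentially no hard step here: once the definitions are unwound, the lemma is a one-line consequence of the TCE and $\LL_1\rho_0=\rho_0$.
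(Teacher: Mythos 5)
Your proposal is correct and is essentially the paper's own argument: both proofs multiply the TCE identity by the invariant-density Jacobian factors, identify the resulting sum over inverse branches with $\LL_1\rho_0=\rho_0$, and then telescope. You simply make the sign bookkeeping and the treatment of the exceptional points slightly more explicit than the paper does.
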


The lemma gives that the partial sum of order
$n$ for the series $\Psi(z)$ at $z=1$ is
$$
\sum_{k=0}^n
\int  \LL^k_0(X\rho_0 ) \varphi' \, dx=
 \int \varphi'\alpha \rho_0-\int \varphi' \LL_0^{n+1}(\alpha \rho_0) \, dx\, .
$$
We do not claim that $\int \varphi' \LL_0^{n+1}(\alpha \rho_0)\, dx$ 
converges as $n\to \infty$.

\begin{proof}
We know that
$X(y) = \alpha(y) - f'(\psi(y))\alpha(\psi(y))$
where $\psi$ is an arbitrary  inverse branch of $f$.
Multiply this by the positive number $\rho_0(\psi(y))/|f'(\psi(y))|$ and sum over 
inverse branches. Since   $\rho_0$ is the invariant density, the sum of these
positive numbers is $\rho_0(y)$, which gives the first claim.
A telescopic sum gives the second claim.
\end{proof}

\begin{theorem}\label{elegant}
Assume that $f_t$ is a $C^{2,3}$ perturbation tangent to the topological
class of a mixing piecewise expanding $C^3$ unimodal map $f$
with infinitesimal perturbation $v=X \circ f$
(in particular $\JJ(f,X)=0$) so that $X \in C^2(f(I))$.
If $\alpha=\partial_t h_t|_{t=0}$  is absolutely continuous then 
\begin{equation}\label{derr}
\Psi_1=\int \varphi' \alpha \rho_0 \, dx\, ,
\quad \forall \varphi \in C^1([a,b]) \, .
\end{equation}
Conversely, if (\ref{derr}) holds  then $\alpha\in BV^{(1)}$
(in particular, $\alpha$ is absolutely continuous).
\end{theorem}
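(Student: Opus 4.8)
The plan is to treat the two implications of the theorem around one computation: differentiating the telescoping identity of Lemma~\ref{funny} and comparing it with the closed form of $\Psi_1$ in Lemma~\ref{reallynice}. From that comparison one reads off the regularity of $\alpha$ in the ``converse'' direction and the formula in the ``forward'' direction.

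\textbf{The common skeleton.} Whenever $\alpha\rho_0\in BV$ (which holds as soon as $\alpha$ is absolutely continuous, since $\rho_0\in BV$ and $\alpha$ is continuous by Corollary~\ref{horiz}), writing $\rho_0=\rho_{reg}+\rho_{sal}$ as in (\ref{dec0}) with $\rho_{reg}\in BV^{(1)}$ by Proposition~\ref{decc}, the Stieltjes Leibniz rule and $\alpha(a)=\alpha(b)=0$ (immediate from (\ref{tceeq}) at the fixed points $a,b$ and $|f'|>1$) give, for every $\varphi\in C^1([a,b])$,
$$\int\varphi'\,\alpha\rho_0\,dx=-\int\alpha\varphi\,\rho'_{sal}-\int\varphi\,g\,dx,\qquad g:=\alpha'\rho_0+\alpha\,\rho'_{reg}\in L^1,$$
while Lemma~\ref{reallynice} reads $\Psi_1=-\int\alpha\varphi\,\rho'_{sal}-\int(\id-\LL_1)^{-1}\bigl(X'\rho_{sal}+(X\rho_{reg})'\bigr)\varphi\,dx$. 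On the other hand, differentiating $(\id-\LL_0)(\alpha\rho_0)=X\rho_0$ from Lemma~\ref{funny} and splitting each term into its atomless and its atomic (jumps at the $c_k$) parts exactly as in Step~2 of the proof of Theorem~\ref{formula} -- the atomic parts cancel thanks to the recursion $s_k=f'(c_k)s_{k+1}$ and to $\alpha(c_1)=X(c_1)$, i.e. $\JJ(f,X)=0$ -- one obtains $(\id-\LL_1)g=X'\rho_{sal}+(X\rho_{reg})'$. One must here use the jump calculus of \S\ref{3.2}, formulas (\ref{spart'})--(\ref{rpart}), not the naive relation (\ref{keye}), which fails once $\alpha\rho_0$ has jumps.

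\textbf{The converse.} Assume (\ref{derr}). Equating the two expressions for $\Psi_1$ above and using that $\varphi\mapsto\int\varphi'\alpha\rho_0$ and $\varphi\mapsto\int h\varphi\,dx$, with $h:=(\id-\LL_1)^{-1}(X'\rho_{sal}+(X\rho_{reg})')\in BV$, are genuine linear functionals on $C^1$, one reads off that the distributional derivative $\partial_x(\alpha\rho_0)$ on $(a,b)$ equals the finite Radon measure $h\,dx+\sum_k s_k\alpha(c_k)\delta_{c_k}$; hence $\alpha\rho_0\in BV$, with atomless part $h\in BV$. Comparing with the Leibniz expansion $\partial_x(\alpha\rho_0)=\rho_0\,d\alpha+\alpha\rho'_{reg}\,dx+\sum_k s_k\alpha(c_k)\delta_{c_k}$ forces $\rho_0\,d\alpha=(h-\alpha\rho'_{reg})\,dx$; since $\rho_0$ is bounded below on $[f^2(c),f(c)]$ (standard for mixing piecewise expanding maps), $\alpha=\alpha\rho_0/\rho_0\in BV$ there, so $\alpha'=(h-\alpha\rho'_{reg})/\rho_0\in BV$ on $[f^2(c),f(c)]$ and $\alpha\in BV^{(1)}$ on the core. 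Off the core one finishes by a finite covering argument: every point eventually enters $[f^2(c),f(c)]$, and pulling $\alpha$ back through the $C^3$ inverse branches in (\ref{tceeq}) propagates $BV^{(1)}$-regularity. Thus $\alpha\in BV^{(1)}$, in particular absolutely continuous.

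\textbf{The forward direction and the main obstacle.} If $\alpha$ is absolutely continuous, $g\in L^1$ is honest, $(\id-\LL_1)g=X'\rho_{sal}+(X\rho_{reg})'$ has zero Lebesgue integral, and by Lemma~\ref{sp} ($1$ a simple eigenvalue of $\LL_1$, projector $\psi\mapsto\rho_0\int\psi\,dx$, rest of the spectrum in $\{|z|\le\tau_0<1\}$) we may invert $\id-\LL_1$ on $\{\int\cdot=0\}$ to get $g=h+\bigl(\int g\bigr)\rho_0$, whence $\int\varphi'\alpha\rho_0=\Psi_1+\bigl(\int g\bigr)\int\varphi\,d\rho_0$ for all $\varphi\in C^1$. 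So (\ref{derr}) is equivalent to $\int g\,dx=0$; and a short computation from (\ref{aalpha}) and the recursion for $s_k$ gives $s_k\alpha(c_k)=-\sum_{n>k}s_nX(c_n)$, hence $\int g\,dx=-\sum_k s_k\alpha(c_k)$ equals $s_1$ times the quantity in (\ref{derivative}) -- that is, $\int g=0$ is exactly condition (\ref{a2}). Everything therefore reduces to showing that \emph{absolute continuity of $\alpha$ forces (\ref{a2})}; this is the heart of the matter. I would prove it by first upgrading ``absolutely continuous'' to ``$\alpha\in BV^{(1)}$'': absolute continuity already yields $\alpha\rho_0\in BV$ with $(\id-\LL_1)g=X'\rho_{sal}+(X\rho_{reg})'\in BV$, and carrying out the $\LL_0$-analogue on the jump-adapted spaces $\widetilde\BB_0$, $\BB_0^{Lip}$ of the spectral-gap analysis of \S\ref{3.2} (and of Theorem~\ref{formula}) lets one solve for $g$ inside $BV$, so $\alpha'=(g-\alpha\rho'_{reg})/\rho_0\in BV$ on the core and $\alpha\in BV^{(1)}$ globally as in the converse; it then remains to see that $\alpha\in BV^{(1)}$ kills the defect $\sum_k k\,s_kX(c_k)$. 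The cleanest way to see the latter is again via Lemma~\ref{funny}: $\LL_0^{n}(\alpha\rho_0)=\alpha\rho_0-\sum_{k=0}^{n-1}\LL_0^k(X\rho_0)$, and the surviving eigenvalue-$1$ component of $\LL_0^{n}(\alpha\rho_0)$ -- a multiple of the antiderivative $\Phi$ of $\rho_0$, carrying precisely the coefficient $\int g$, and where a possible Jordan block attached to an infinite postcritical orbit lives -- is forced to vanish when $\alpha\in BV^{(1)}$. Once (\ref{a2}) is available, Proposition~\ref{abelian} gives $\Psi_1=\lim_{z\to1^-}\Psi(z)$, and the resolvent identity $(\id-z\LL_0)^{-1}(X\rho_0)=\alpha\rho_0-(1-z)(\id-z\LL_0)^{-1}\LL_0(\alpha\rho_0)$ together with $\LL_0^{n}(\alpha\rho_0)\to0$ yields $\lim_{z\to1^-}\Psi(z)=\int\varphi'\alpha\rho_0\,dx$, which is (\ref{derr}). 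The delicate point throughout is the $\LL_0$-spectral structure at $1$ on the jump-adapted spaces; the rest is bookkeeping.
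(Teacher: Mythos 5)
The converse direction of your proof is essentially the same as the paper's: from (\ref{derr}) you read off that $(\alpha\rho_0)'$ is a finite Radon measure, hence $\alpha\rho_0\in BV$, and you then use $\inf_{[c_2,c_1]}\rho_0>0$ together with the Leibniz decomposition to conclude $\alpha\in BV^{(1)}$; the extra pull-back step off the core is a sensible addition.

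The forward direction, however, has a genuine gap. Your reduction is correct and in fact illuminating: writing $g=\alpha'\rho_0+\alpha\rho_{reg}'$, you derive $(\id-\LL_1)g=X'\rho_{sal}+(X\rho_{reg})'$ from Lemma~\ref{funny} and uniqueness of the $L^1$-eigenfunction of $\LL_1$ at $1$, so that $g=h+(\int g)\rho_0$ and (\ref{derr}) is equivalent to $\int g=0$, which you correctly identify with $-\sum_k s_k\alpha(c_k)$, i.e.\ with condition~(\ref{a2}). But then the crucial implication ``$\alpha$ absolutely continuous $\Rightarrow\int g=0$'' is only asserted, not proved: the sentence about a ``surviving eigenvalue-$1$ component of $\LL_0^n(\alpha\rho_0)$ living in a Jordan block attached to an infinite postcritical orbit'' is not a well-defined spectral statement ($\LL_0$ has no spectral gap on $BV$, $\alpha\rho_0\notin BV^{(1)}$, and no concrete space on which the desired decay of $\LL_0^n(\alpha\rho_0)$ holds is exhibited). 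The paper closes exactly this loop by a different device: it passes to the \emph{antiderivative} $B$ of $\beta'$, so that $(\id-\LL_0)B=X\rho_0$ has the same right-hand side as $(\id-\LL_0)(\alpha\rho_0)$; since $\id-\LL_0$ has a one-dimensional kernel spanned by $R_0$ on absolutely continuous functions supported in $(-\infty,b]$, $B=\alpha\rho_0+\kappa R_0$, and the free constant $\kappa$ is killed by the boundary condition at $x<a$ (where $\alpha\rho_0$ and $B$ vanish while $R_0\equiv-1$). Working at the level of the density $g$ and $\LL_1$ leaves no boundary to exploit, which is exactly why you get stuck at the normalization constant $\int g$. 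Finally, the closing detour through Proposition~\ref{abelian} and the resolvent identity is superfluous: once $\int g=0$ is known, (\ref{derr}) follows directly from your own ``common skeleton.''
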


Theorem~\ref{elegant} will easily imply:

\begin{corollary}[Derivative of the TCE]\label{cor1}
Under the assumptions of Theorem~\ref{elegant},
if $\alpha$ is absolutely continuous, then 
\begin{equation}\label{TTCE}
(-\id+\LL_1)
(\alpha' \rho_0 +\alpha (\rho_{reg})')
=X'\rho_0+X (\rho_{reg})'\, .
\end{equation}
\end{corollary}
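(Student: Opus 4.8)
The plan is to read off Corollary~\ref{cor1} from Theorem~\ref{formula} and Theorem~\ref{elegant} by an integration by parts, with Lemma~\ref{funny} as a guide to the expected shape of the answer. Set $\nu:=\partial_t(\rho_t\, dx)|_{t=0}$, the Radon measure supplied by Theorem~\ref{formula}, so that $\nu=-\alpha\rho_{sal}'-(\id-\LL_1)^{-1}(X'\rho_{sal}+(X\rho_{reg})')$ and $\int\varphi\, d\nu=\RR'(0)=\Psi_1(\varphi)$ for every $\varphi\in C^1([a,b])$. Since $\alpha$ is assumed absolutely continuous, Theorem~\ref{elegant} gives $\Psi_1(\varphi)=\int\varphi'\,\alpha\rho_0\, dx$, and hence $\int\varphi\, d\nu=\int\varphi'\,\alpha\rho_0\, dx$ for all $\varphi\in C^1([a,b])$.

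Next I would integrate by parts on the right-hand side. Absolute continuity of $\alpha$ makes $\alpha$ a continuous function of bounded variation, with $\alpha(a)=0$: evaluating the twisted cohomological equation (\ref{tceeq}) at $x=a$ and using $f(a)=a$, $v(a)=0$ and $f'(a)\ne1$ yields $\alpha(a)(1-f'(a))=0$. Together with $\supp\rho_0\subset[f^2(c),f(c)]$ this shows that $\alpha\rho_0$ is a compactly supported $BV$ function vanishing near the endpoints, so $\int\varphi'\,\alpha\rho_0\, dx=-\int\varphi\, d(\alpha\rho_0)'$ with no boundary term. As $C^1([a,b])$ is dense in $C^0([a,b])$ and $\nu$, $(\alpha\rho_0)'$ are finite Radon measures, we obtain $\nu=-(\alpha\rho_0)'$. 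Now $\alpha\in BV$ is continuous and $\rho_0\in BV$ has only regular discontinuities, so the Leibniz formula recalled at the start of Section~\ref{three} applies; with $\rho_0=\rho_{reg}+\rho_{sal}$ it gives $(\alpha\rho_0)'=\alpha'\rho_0+\alpha(\rho_{reg})'+\alpha\rho_{sal}'$, whose purely atomic part is $\alpha\rho_{sal}'=\sum_k s_k\alpha(c_k)\delta_{c_k}$.

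I would then substitute this decomposition into $-(\alpha\rho_0)'=\nu$. The atomic contributions $-\alpha\rho_{sal}'$ occur on both sides and cancel, leaving $\alpha'\rho_0+\alpha(\rho_{reg})'=(\id-\LL_1)^{-1}(X'\rho_{sal}+(X\rho_{reg})')$, where $(\id-\LL_1)^{-1}$ is the resolvent on the spectral complement of $\rho_0$ (on which $\LL_1$ has spectral radius $<1$ by Lemma~\ref{sp}, and into which $X'\rho_{sal}+(X\rho_{reg})'$ projects, having zero integral as noted in the proof of Theorem~\ref{formula}). Applying $\id-\LL_1$ to both sides, and using $X'\rho_{sal}+(X\rho_{reg})'=X'\rho_0+X(\rho_{reg})'$, then yields (\ref{TTCE}). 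Since $(\id-\LL_1)\rho_0=0$, any multiple of $\rho_0$ that one might add to the formula of Theorem~\ref{formula} in order to normalise $\nu$ to total mass zero is annihilated at this last step and does not affect the identity.

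The delicate point is the bookkeeping around the atomic part $\rho_{sal}'$: one must check that the distributional Leibniz rule genuinely splits $(\alpha\rho_0)'$ as above — this is exactly where absolute continuity of $\alpha$ (giving $\alpha\in BV$) and its continuity (killing any jump–jump cross term with the regular discontinuities of $\rho_0$) are used — and that the resulting atoms at the $c_k$ coincide with those of $\nu$ term by term, so that they can be cancelled before one inverts $\id-\LL_1$. The integration by parts itself is harmless once $\alpha(a)=0$ is in hand; everything else is formal manipulation once Theorems~\ref{formula} and \ref{elegant} are available.
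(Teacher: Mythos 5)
Your argument takes essentially the same route as the paper's: equate the two expressions for $\Psi_1$ coming from (\ref{derr}) and from Lemma~\ref{reallynice}/(\ref{nicer}), integrate by parts, apply the Leibniz rule to $(\alpha\rho_0)'$, cancel the atomic part $\alpha\rho_{sal}'$, and apply $\id-\LL_1$. One structural remark: the paper explicitly points out, just after the corollary, that the proofs of Theorem~\ref{elegant} and Corollary~\ref{cor1} ``do not require any information from Sections~\ref{three}, \ref{fourr} or \ref{four}''; in particular Theorem~\ref{formula} is deliberately not used. You open by invoking Theorem~\ref{formula} to introduce $\nu=\partial_t(\rho_t\, dx)|_{t=0}$, but all you actually need is the identity $\Psi_1(\varphi)=\int\varphi\, d\nu$ with $\nu=-\alpha\rho_{sal}'-(\id-\LL_1)^{-1}(X'\rho_{sal}+(X\rho_{reg})')$, and that is exactly (\ref{nicer}); replacing the appeal to Theorem~\ref{formula} by a direct appeal to Lemma~\ref{reallynice} recovers the logical independence the authors stress, at no cost.

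There is, however, a real issue at the very end. After cancelling the atoms you obtain $\alpha'\rho_0+\alpha(\rho_{reg})'=(\id-\LL_1)^{-1}(X'\rho_{sal}+(X\rho_{reg})')$, and applying $\id-\LL_1$ gives
$(\id-\LL_1)(\alpha'\rho_0+\alpha(\rho_{reg})')=X'\rho_0+X(\rho_{reg})'$,
which is the \emph{negative} of (\ref{TTCE}) as printed; your closing sentence (``then yields (\ref{TTCE})'') silently discards this sign. In fact the sign you compute is the correct one: differentiating Lemma~\ref{funny}, $(\id-\LL_0)(\alpha\rho_0)=X\rho_0$, and using (\ref{keye}) gives $(\id-\LL_1)\bigl((\alpha\rho_0)'\bigr)=(X\rho_0)'$, while $(\id-\LL_1)(\alpha\rho_{sal}')=X\rho_{sal}'$ follows from $s_{k-1}=f'(c_{k-1})s_k$, the TCE at $c_{k-1}$, and $\alpha(c_1)=X(c_1)$; subtracting reproduces $(\id-\LL_1)(\alpha'\rho_0+\alpha(\rho_{reg})')=X'\rho_0+X(\rho_{reg})'$. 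The discrepancy traces to the paper's own proof of the corollary, whose first display is off by a sign with respect to (\ref{nicer}). So do not massage your (correct) computation to match (\ref{TTCE}); flag the mis-signed statement instead.
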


Note that the proofs of Theorem~\ref{elegant} 
 and  Corollary~\ref{cor1}
use the results from \cite{Ba} (in particular Lemma 4.1, Prop. 4.4
there), Proposition~\ref{TCE},  and
the easy Lemma~\ref{funny}  but
do not require any information from  Sections~\ref{three},
~\ref{fourr} or ~\ref{four} of the present paper.

\begin{proof}[Proof of Corollary~\ref{cor1}]
Putting together  ~(\ref{derr})
and (\ref{ST1}) we get
\begin{align*}
\Psi_1+
\int \alpha \varphi (\rho_{sal} )'&=
\int  \alpha \varphi' \rho_0 \, dx +\int \alpha \varphi (\rho_{sal})' \\
&=
\int (\id -\LL_1)^{-1} (X' \rho_{sal} + (X\rho_{reg})')\varphi \,  dx\, .
\end{align*}
And, since  the boundary
term in the integration by parts vanishes,
\begin{align*}
\int  \alpha \varphi' \rho_0\, dx +\int \alpha\varphi (\rho _{sal})' &=
\int \alpha \varphi ( -\rho'_0 +(\rho_{sal})')
-\int \alpha' \varphi \rho_0 \, dx \\
&=
-\int \alpha \varphi (\rho_{reg})' \, dx-\int \alpha' \varphi \rho_0 \, dx\, .
\end{align*}
\end{proof}

\begin{proof}[Proof of Theorem~\ref{elegant}]
We suppose that
$c$ is neither periodic nor preperiodic (the other cases are easier).
Recall that $\alpha$ is continuous by Corollary~ \ref{horiz}.
Lemma~ \ref{reallynice} allows us to write $\Psi_1$
as
\begin{equation}\label{first}
\Psi_1 =-\int \varphi \beta'\, ,
\end{equation}
where $\beta'$ is a Stieltjes measure. In fact,
$$
\beta'= \alpha( \rho_{sal})' + (\id -\LL_1)^{-1} (X' \rho_{sal} + (X \rho_{reg})') \, dx \, .
$$
The above implies that $\beta'$ is the sum of an absolutely continuous
measure with density  of bounded variation, and a weighted sum of
diracs along the postcritical orbit.
Now by \cite[Lemma 4.1]{Ba}, we know that $(\id-f_*)(\alpha \rho'_{sal})=X \rho'_{sal}$.
Thus 
\begin{equation}\label{second}
(\id-f_*) (\beta')=X (\rho_{sal} )'+ X' \rho_{sal} + (X\rho_{reg})'= (X \rho_0)'\, .
\end{equation}
Integrating (\ref{first}) by parts, we get
(there are no boundary terms, see e.g. \cite[Proof of Prop. 4.4, Theorem 5.1]{Ba}),
$$
\Psi_1=
\int \varphi'(x) B(x)\, dx \, ,
$$
where $B$ is a function of bounded variation, supported in 
$[a, b]$, satisfying $B'=\beta'$. 
In particular, $B$ is the sum of an element $B_1$ of $BV^{(1)}$ 
with a function with prescribed jumps along the postcritical
orbit. It is easy to check that this
function is in fact just the saltus of $\alpha \rho_{sal}$
(or, equivalently, the saltus of $\alpha\rho_0$).
By
(\ref{second}) (and the fact that both $B(x)$ and
$\rho_0(x)$ vanish for $x \ge b$) we get that
\begin{equation}\label{AA}
(\id -\LL_0)  B= X \rho_0 \, .
\end{equation}
Now, Lemma~\ref{funny} implies that
\begin{equation}\label{BB}
(\id -\LL_0) (\alpha\rho_0)=X\rho_0\, .
\end{equation} 
Putting together (\ref{AA}--\ref{BB}) and $B=B_1+(\alpha\rho_0)_{sal}$, we get that
\begin{equation}\label{CC}
(\id -\LL_0) (B_1-(\alpha \rho_0)_{reg})=0 \, .
\end{equation}
After these preliminaries, we move on to the proof.

If $\alpha$ is absolutely continuous then $(\alpha \rho_0)_{reg}$ is
absolutely continuous (because $\alpha\in BV\cap C^ 0$ and
$((\alpha\rho_0)_{reg})'=\alpha'\rho_0+\alpha(\rho_{reg})'$
is in $L^1(Leb)$).  $B_1$ is absolutely continuous because it is
in $BV^{(1)}$.
The operator $\LL_1$ acting on $L^1(Leb)$ has $\rho_0$ as unique fixed point,
and thus $\LL_0$ on the Banach space of absolutely continuous functions
supported in $(-\infty, b]$ has $R_0(x)=-1+\int_{-\infty}^x \rho_0(y)\, dy$ as unique fixed point.
Thus (\ref{CC}) implies that $B_1=(\alpha \rho_0)_{reg} + \kappa R_0$, so that
$B=\alpha \rho_0+ \kappa R_0$. Since $B(x)=\alpha (x)\rho_0(x)=0$ for
$x \le a$ (use that $\int(X'\rho_{sal} + (X\rho_{reg})') dx=0$ by $\JJ(f,X)=0$),
we have that $\kappa=0$, proving (\ref{derr}).

We next prove the converse. If (\ref{derr}) holds then $B=\alpha \rho_0=
\alpha \rho_{sal}+ \alpha\rho_{reg}$ is in $BV$ by the preliminary remarks. 
Since $\rho_0$ is bounded from below on $[c_2, c_1]$, this implies
that $\alpha|_{[c_2,c_1]}$ is in $BV$. The preliminaries also give 
$B-(\alpha \rho_0)_{sal}=(\alpha \rho_0)_{reg} \in BV^{(1)}$,
i.e., $\alpha' \rho_0+\alpha (\rho_{reg})' \in BV$, which implies that
$\alpha'\rho_0 \in BV$ (since $\alpha\in BV$). Using again $\inf_{[c_2,c_1]}\rho_0 >0$
we get that $\alpha' \in BV$, i.e., $\alpha \in BV^{(1)}$.
\end{proof}


\section{Necessity of the horizontality condition}

There exist  examples of perturbations $f_t$ of  good mixing piecewise expanding 
$C^\infty$ unimodal
maps $f$ with $c$ preperiodic,
$v=X \circ f$ and  $\JJ(f,X)\ne 0$ so that $\RR(t)$ is not Lipschitz 
for some $\varphi \in C^\infty([a,b])$
(\cite[\S 6]{Ba} and  \cite{MM},
see also \cite[Remark 6.3]{Ba}). 
Theorem~\ref{nonlip} below  shows the lack of Lipschitz regularity of $\RR(t)$ for all
perturbations $f_t$ so that the infinitesimal deformation is
not horizontal (we require that $c$ be nonperiodic and, if
$c$ recurrs to itself, $ f'(c_-)=-f'(c_+)$).  The proof of Theorem~\ref{nonlip} hinges on 
a careful rereading of the proof
of   Theorem ~\ref{formula}.

\begin{theorem}\label{nonlip}
Let $f_t$ be a $C^ {2,3}$ perturbation of a mixing piecewise expanding $C^ 3$ unimodal
map $f$ with infinitesimal deformation $v= X \circ f$ such that
$X \in C^2(f(I))$ but $v$ is not horizontal for $f_0=f$,  and assume that $c$
is not periodic for $f$.
If 
$$\gamma=\inf d(f^j(c), c)=0,
$$ 
we assume in addition
that $\lim_{x \to c, x < c} f'(x)=-\lim_{x \to c , x > c} f'(x)$.

If the postcritical orbit of $f_0$  is not dense
in $[c_2,c_1]$ then  there exist $\varphi\in C^\infty(I)$ and $K>0$,  
so that, for any sequence $t_n \to 0$
so that the postcritcal orbit of each $f_{t_n}$ is infinite, there is $n_0 \ge 1$ so that
$$
| \int \varphi \, \rho_{t_n} \, dx-\int \varphi \, \rho_0\, dx|
\ge K |t_n| | \ln |t_n| \, , \quad \forall n \ge n_0\,  .
$$
If the postcritical orbit of $f_0$ is infinite but not dense, the above holds 
for any sequence $t_n \to 0$
with $c$ not periodic under $f_{t_n}$.

If the postcritical orbit of $f_0$ is dense in $[c_2, c_1]$ then there exists
$\varphi\in C^\infty(I)$ so that  for any sequence $t_n \to 0$
so that $c$ not periodic under $f_{t_n}$, we have
 $$
 \lim_{n\to \infty}|t_n^{-1}(\int \varphi \, \rho_{t_n}\, dx-\int \varphi \, \rho_{0}\, dx)|\to \infty\, .
 $$
\end{theorem}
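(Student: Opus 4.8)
The plan is to reread the proof of Theorem~\ref{formula}, keeping track of the terms that are discarded there because $\JJ(f,X)=0$, and to show that when $\JJ(f,X)\ne0$ those terms survive and destroy Lipschitz regularity. The starting point is an elementary identity: setting $a_k=\partial_t f_t^k(c)|_{t=0}$ one has $a_0=0$ and $a_{k+1}=v(c_k)+f'(c_k)a_k$, so $(a_k)_{k\ge1}$ solves along the postcritical orbit the same recursion as any solution of the twisted cohomological equation~(\ref{tceeq}); comparing with $\alpha_{(0)}$ from~(\ref{aalpha}) and using $s_k=f'(c_k)s_{k+1}$ (valid since $c$ is not periodic) gives $a_k=\alpha_{(0)}(c_k)+(f^{k-1})'(c_1)\,\JJ(f,X)/s_1$, hence the key identity $s_k a_k=s_k\,\alpha_{(0)}(c_k)+\JJ(f,X)$, whose second term does not decay in $k$. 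Therefore the ``formal derivative'' of $t\mapsto\rho_t$ is no longer an element of $\BB_0$: in the decomposition $\rho_t-\rho_0=(\rho_t-G_t\rho_t)+(G_t\rho_t-\rho_0)$ of the proof of Theorem~\ref{formula}, the first summand gives $\int\varphi\,(\rho_t-G_t\rho_t)\,dx=\sum_k s_{k,t}\int_{c_{k,t}}^{c_k}\varphi\,dx$, whose $t$-linearization contains the divergent series $-t\,\JJ(f,X)\sum_k\varphi(c_k)$, while in the second summand Step~2 of that proof (equation~(\ref{glu}) and the analysis near $c_1$) produces the extra, non-summable term $-t\,\JJ(f,X)\sum_k f''(c_{k-1})/f'(c_{k-1})^2\,H_{c_k}$ in $\partial_t\PP_t(\rho_0)$.

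I would then keep the cutoff $k_t\asymp\log|t|^{-1}$ introduced in Step~2 of the proof of Theorem~\ref{formula} (defined by $\sum_{k\ge k_t}|s_{k-1}|\le|t|^2$), rerun that argument for the convergent pieces --- which reassemble into the would-be $\Psi_1$ and are hence $O(t)$ --- and control the two surviving divergent sums. The conclusion I aim for is that, for every $\varphi\in C^1([a,b])$ and uniformly over the admissible $t$,
\begin{equation*}
\int\varphi\,\rho_t\,dx-\int\varphi\,\rho_0\,dx=-t\,\JJ(f,X)\sum_{k=1}^{k_t}\Bigl(\varphi(c_k)-\int\varphi\,\rho_0\,dx\Bigr)+O(t)\, ,
\end{equation*}
the subtraction $\int\varphi\,\rho_0\,dx$ being forced by the partial cancellation between the two summands (with $\varphi\equiv1$ the identity reads $0=0$). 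Up to the $O(t)$ term, the right-hand side is exactly a truncation of the divergent series~(\ref{allt}) of Proposition~\ref{nono}.

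Granting this identity, the non-dense case is soft. If the postcritical orbit of $f_0$ avoids an open interval $U\subset(c_2,c_1)$, choose $\varphi\in C^\infty(I)$ with $0\le\varphi\le1$, $\varphi\equiv1$ off $U$ and $\varphi\equiv0$ on a strictly smaller subinterval $V\subset U$; then $\varphi(c_k)=1$ for all $k\ge2$, while $\int\varphi\,\rho_0\,dx\le1-\int_V\rho_0\,dx<1$ (using $\inf_{[c_2,c_1]}\rho_0>0$), so $\sum_{k\le k_t}(\varphi(c_k)-\int\varphi\,\rho_0\,dx)\ge(\int_V\rho_0\,dx)\,k_t\gtrsim\log|t|^{-1}$, whence $|\RR(t)-\RR(0)|\ge K|t\ln|t||$ with $K\asymp|\JJ(f,X)|\int_V\rho_0\,dx$; the same $\varphi$ and $K$ work along any sequence $t_n\to0$ with $f_{t_n}$ having infinite postcritical orbit (infinitude of the orbit is what keeps the tail estimate and the cutoff $k_{t_n}$ valid, and if $c$ is periodic under $f_{t_n}$ the Markov case is excluded by hypothesis). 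In the dense case $\varphi(c_k)-\int\varphi\,\rho_0\,dx$ cannot be kept of one sign and the logarithmic rate is lost; instead I would produce, by a Baire-category argument in $C^0([a,b])$ using that a dense postcritical orbit cannot have $\mu_0$ among the weak-$*$ limits of its empirical measures, a single $\varphi\in C^\infty(I)$ whose partial sums $\sum_{k\le N}(\varphi(c_k)-\int\varphi\,\rho_0\,dx)$ are unbounded along every subsequence, so that the displayed identity forces $|t_n^{-1}(\RR(t_n)-\RR(0))|\to\infty$ along all admissible $t_n$.

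The main obstacle is the uniform control of the $O(t)$ remainder in the displayed identity. The jumps decay at rate $\lambda$ while the postcritical orbits of $f_t$ and $f$ separate at the faster rate $\Lambda$ (note $\Lambda\lambda>1$), so one cannot simply truncate at $k_t$ and bound the tail $k>k_t$ by worst-case distortion; one must show that the high-$k$ contributions of $\rho_t-G_t\rho_t$ and of $G_t\rho_t-\rho_0$ cancel up to $O(t)$, which is where the transfer-operator spectral picture of \S\ref{3.2}, the bound~(\ref{newKL'}) for $(\id-\PP_t)^{-1}(\id-\widehat\Pi_t)$, and the piecewise-expanding distortion estimates (as in \cite[Lemma 11]{Ke}) must be combined, and where, when $\inf_k d(c_k,c)=0$, the normalization $\lim_{x\uparrow c}f'(x)=-\lim_{x\downarrow c}f'(x)$ is needed to tame the coefficients $E_1,E_1'$ of~(\ref{ddef}),(\ref{rels'}) at $c_1$. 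The second delicate point is the existence of the good $\varphi$ in the dense case, which is genuinely non-quantitative and where only the qualitative blow-up, not the $|t\ln|t||$ rate, is available.
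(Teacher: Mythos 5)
Your plan for the non-dense case tracks the paper's proof quite closely in substance, though you organize it around a cleaner displayed identity. You use the same recursion $a_k=\beta_k=\partial_t c_{k,t}|_{t=0}$ solving the TCE along the orbit, the same decomposition $\rho_t-\rho_0=(\rho_t-G_t\rho_t)+(G_t\rho_t-\rho_0)$, the same cutoff $M(t)\asymp\log|t|^{-1}$ coming from $|t|\,|(f^M)'(c_1)|\asymp1$, and the same choice of a smooth $\varphi$ with $\varphi(c_j)=1$ and $\int\varphi\,d\mu_0<1$ (or $=0$ after normalizing). The identity $s_k\beta_k=s_k\alpha_{(0)}(c_k)+\JJ(f,X)$ is exactly the paper's (\ref{forl}). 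Your observation that both summands shed a divergent $\JJ$-piece that partially cancels (leaving the $\int\varphi\rho_0$ correction) is a genuine and rather nice clarification of what the paper does silently by normalizing $\int\varphi\,d\rho_0=0$. You also correctly identify where the $O(t)$ remainder control is delicate, and where the assumption $\lim_{x\uparrow c}f'=-\lim_{x\downarrow c}f'$ is needed when $\gamma=0$ to tame $E_1,E_1'$. One genuine omission in the non-dense branch: when the postcritical orbit of $f_0$ is finite (preperiodic $c$) your $\BB_0$-framework does not directly apply, because the comparison space $\widehat\BB_t$ for $f_{t_n}$ has a different combinatorial structure. The paper introduces the unfolding/refolding operators $\EE_t,\FF_t$ precisely to reconcile $\widehat\BB_0$ with $\widehat\BB_\infty$ when one orbit is eventually periodic and the other is not; without something of this kind your identity is not even set up in the preperiodic subcase.

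The dense case is where your proposal has a genuine gap, not just a technicality. You want to produce $\varphi$ with all partial sums $\sum_{k\le N}(\varphi(c_k)-\int\varphi\,\rho_0\,dx)$ unbounded by a Baire argument ``using that a dense postcritical orbit cannot have $\mu_0$ among the weak-$*$ limits of its empirical measures.'' That assertion is false: a dense postcritical orbit can perfectly well be $\mu_0$-equidistributed, in which case $\frac1N\sum_{k\le N}(\varphi(c_k)-\int\varphi\,\rho_0\,dx)\to0$ for every $\varphi$, and there is no soft mechanism forcing the (uncesaroed) partial sums to blow up. A Baire argument cannot overcome this, because the bad set of $\varphi$ is not obviously meagre, and in any case the theorem needs blow-up along \emph{every} admissible sequence $M(t_n)$, not merely along some subsequence. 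The paper's argument at this point is essential and quite different: it invokes the CLT with Berry--Esseen speed for BV observables (\cite[Thm~8.1]{Br}) to produce a single Lebesgue-typical $x_0$ for which $|\sum_{k<n}\varphi(f^k x_0)|\ge|y|\sigma\sqrt n$ for all $n\ge N_1$, then uses density of the postcritical orbit to find $j_0$ with $d(c_{j_0},x_0)<\delta$ so that, by the elementary shadowing Lemma~\ref{conju}, the postcritical Birkhoff sums inherit the $\sqrt{n-j_0}$ lower bound on a window of length $m$ with $\delta\Lambda_f^m<1/2$. Combined with the upper bound $\le D/M(t_n)$ forced by the (assumed) Lipschitz estimate, this gives the contradiction. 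You will not recover this with a non-quantitative category argument; the $\sqrt n$ lower bound and the shadowing-length matching with $M(t_n)$ are both needed.

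Two smaller issues worth noting. First, you claim that Step~2 contributes an extra nonsummable term $-t\,\JJ\sum_k f''(c_{k-1})/f'(c_{k-1})^2 H_{c_k}$ to $\partial_t\PP_t(\rho_0)$; that is correct as a statement about the \emph{saltus} part, but the same divergent atoms appear with opposite sign in the singular-point contribution to $((\PP_t\rho_0)_{reg})'$ (the $\beta_k E_k$ piece of (\ref{gli})), so the contribution of $G_t\rho_t-\rho_0$ is finite as a Radon measure even though its $\BB_0$-decomposition is not uniformly controlled; the paper uses exactly this to get $|\GG_{t_n}(\rho_{t_n})-\rho_0|_{Radon}=O(|t_n|)$. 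Second, your heuristic $\int\varphi(\rho_t-G_t\rho_t)=\sum_k s_{k,t}\int_{c_{k,t}}^{c_k}\varphi\,dx$ uses $s_{k,t}\to s_k$ uniformly on the truncated range, which needs the quantitative $|s_{k,t}-s_k|$ control the paper obtains through the $\lim_{t\to0}s_{1,t}=s_1$ argument and the recursion; that is where the good-mixing hypothesis and (\ref{B}) enter and cannot be omitted.
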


We expect that if $c$ is periodic, but $f=f_0$ is good and  
$\lim_{x \to c, x < c} f'(x)=-\lim_{x \to c , x > c} f'(x)$, 
$v$ is not horizontal, then there exists $\varphi\in C^\infty(I)$ so that the function
$\int \varphi \rho_t\, dx$ is not Lipschitz at $t=0$. 

Existence of sequences $t_n$ as in Theorem~\ref{nonlip} is guaranteed by the
following easy lemma:

\begin{lemma}\label{4.1}
Let $f_t$ be a $C^ {2,2}$ perturbation of a mixing piecewise expanding $C^ 2$ unimodal
map $f$ with infinitesimal deformation $v$.
If $v$ is not horizontal for $f_0$ then
there is a sequence $t_n \to 0$
so that $c$ has an infinite forward orbit for each $f_{t_n}$. 
\end{lemma}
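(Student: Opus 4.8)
The plan is to establish the contrapositive by a Baire–category argument. Suppose for contradiction that there is $\epsilon_0>0$ such that for every $|t|<\epsilon_0$ the turning point $c$ has finite forward orbit under $f_t$, i.e.\ $c$ is periodic or preperiodic for $f_t$. Write $c_{k,t}=f_t^k(c)$, so $c_{0,t}\equiv c$ and $c_{k,t}=f_t(c_{k-1,t})$, and let $\lambda_{\min}>1$ bound $\inf|f_t'|$ from below uniformly for small $t$. The computational engine is the following: at any parameter $t_*$ whose critical orbit $\{c_{k,t_*}\}_{k\ge1}$ avoids $c$, the maps $t\mapsto c_{k,t}$ are $C^{r_0}$ near $t_*$, and differentiating the recursion gives, with $D_k:=\partial_t c_{k,t}|_{t=t_*}$ and $v_{t_*}:=\partial_t f_t|_{t=t_*}$, that $D_0=0$ and $D_k=v_{t_*}(c_{k-1,t_*})+f_{t_*}'(c_{k-1,t_*})D_{k-1}$, hence, unrolling,
\begin{equation*}
D_k=(f_{t_*}^{k-1})'(c_{1,t_*})\sum_{j=0}^{k-1}\frac{v_{t_*}(c_{j,t_*})}{(f_{t_*}^{j})'(c_{1,t_*})}\,.
\end{equation*}
Since $|v_{t_*}|$ is bounded and $|(f_{t_*}^{j})'|\ge\lambda_{\min}^{j}$, the series $\sum_{j\ge0}v_{t_*}(c_{j,t_*})/(f_{t_*}^{j})'(c_{1,t_*})$ converges (terms $O(\lambda_{\min}^{-j})$), and it coincides with the horizontality defect of $v_{t_*}$ for $f_{t_*}$: directly when $c$ is not periodic for $f_{t_*}$, and via a geometric–series resummation (in the spirit of Lemma~\ref{nice}) when $c$ is periodic.

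Fix $\delta\in(0,\epsilon_0)$. By assumption $(-\delta,\delta)=\bigcup_{0\le n<m}\{t:f_t^m(c)=f_t^n(c)\}$, a countable union of closed sets, so Baire's theorem furnishes $n_*<m_*$, $p_*:=m_*-n_*$ and an open interval $J\subset(-\delta,\delta)$ on which $f_t^{n_*+kp_*}(c)=f_t^{n_*}(c)$ for all $k\ge0$. If $c$ fails to be periodic for some $t_*\in J$ — equivalently its critical orbit avoids $c$, which then persists on a subinterval — the displayed identity applies: $D_{n_*+kp_*}=D_{n_*}$ stays bounded while $(f_{t_*}^{n_*+kp_*-1})'(c_{1,t_*})\to\infty$, so the defect of $v_{t_*}$ vanishes. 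Otherwise $c$ is periodic for every $t\in J$, of period dividing $p_*$; refining $J$ by the finitely many exact periods $q$ (Baire again) gives a subinterval on which $f_t^q(c)=c$ with the critical orbit meeting $c$ only at multiples of $q$, so $t\mapsto f_t^q(c)$ is $C^{r_0}$, $D_q^{(t_*)}=0$, and the identity forces $\sum_{j=0}^{q-1}v_{t_*}(c_{j,t_*})/(f_{t_*}^{j})'(c_{1,t_*})=0$ — again the defect vanishes. Thus for each small $\delta$ there is $t_\delta\in(-\delta,\delta)$ with $v_{t_\delta}$ horizontal for $f_{t_\delta}$.

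It remains to let $\delta\to0$, so $t_\delta\to0$. I treat the principal case, in which $c$ is not periodic for $f_0$, so $c_i\ne c$ for all $i\ge1$. Then for each fixed $j$, $v_{t_\delta}(c_{j,t_\delta})/(f_{t_\delta}^{j})'(c_{1,t_\delta})\to v(c_j)/(f^j)'(c_1)$ as $\delta\to0$, with uniform bound $C\lambda_{\min}^{-j}$; and if the period $q$ arising in the second alternative stayed bounded along a subsequence, the closed set $\{t:f_t^q(c)=c\}$ would accumulate at $0$, forcing $f_0^q(c)=c$, contrary to non-periodicity — so $q\to\infty$. A dominated-convergence argument on the (partial) sums then gives $\sum_{j\ge0}v(c_j)/(f^j)'(c_1)=0$, i.e.\ $v$ is horizontal for $f$, contradicting the hypothesis. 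Hence no such $\epsilon_0$ exists; taking $\epsilon_0=1/n$ produces the required $t_n\to0$.

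The step I expect to be the real obstacle is the last limit passage when the critical orbit of $f_0$ is itself eventually periodic: there $(f^j)'(c_1)$ for $j\ge N_f$ involves the one-sided derivatives of $f$ at $c$, and $c_{j,t_\delta}$ may approach $c$ from either side, so one must control these signs — this is where a symmetry condition of the form $\lim_{x\uparrow c}f'(x)=-\lim_{x\downarrow c}f'(x)$, as in Theorem~\ref{nonlip}, enters — or else reduce that case to the principal one. Everything else (the recursion for $D_k$, the Baire refinements, the continuity estimates) is routine bookkeeping.
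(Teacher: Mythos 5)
Your route is genuinely different from the paper's. The paper first shows that non‑horizontality yields $k_0\ge1$ with $\partial_t c_{k_0,t}|_{t=0}\ne0$ (the same recursion you unroll for $D_k$), and then finishes with a kneading‑theory argument: were the postcritical orbit of $f_t$ finite for \emph{all} small $t$, the itinerary $\Sigma(t)$ would always be eventually periodic, hence confined to a countable set, while continuity and non‑constancy of $c_{k_0,t}=\Theta(\Sigma(t),k_0)$ together with monotonicity of the kneading invariant force $\Sigma(t)$ to sweep an uncountable set — a contradiction with no limit passage at all. Your Baire‑plus‑defect scheme replaces this combinatorial input by a limiting computation on horizontality defects, which is a clean idea and in fact works in the generic situations, but it leaves a genuine hole in one case.

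The proposal is correct whenever $c$ is \emph{not periodic} for $f_0$; in particular it covers the preperiodic case, where you were needlessly worried: preperiodic means $c_j\ne c$ for all $j\ge1$, so $(f^j)'(c_1)$ never involves the one‑sided derivatives of $f$ at $c$, and your pointwise‑plus‑dominated convergence (with the diagonal truncation when $q_\delta\to\infty$) goes through unobstructed. The actual gap is when $c$ is periodic of minimal period $n_1$ for $f_0$. There the defect you must kill is the finite sum $\sum_{j=0}^{n_1-1}v(c_j)/(f^j)'(c_1)$, but the vanishing you produce at $t_\delta$ concerns a sum up to $q_\delta-1$ where $q_\delta$ is necessarily a \emph{proper} multiple of $n_1$ for $t_\delta\ne0$ (since $\partial_t c_{n_1,t}|_{t=0}\ne0$ forces $c_{n_1,t_\delta}\ne c$ near $0$), or an infinite sum if $c$ is preperiodic for $f_{t_\delta}$. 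The extra summands $j=n_1,2n_1,\dots$ contain the factors $f_{t_\delta}'(c_{kn_1,t_\delta})$ at points converging to $c$ from an uncontrolled side, so they have no pointwise limit; and they are not negligible, since the identity $D_{n_1}(t_\delta)=(f_{t_\delta}^{n_1-1})'(c_{1,t_\delta})\sum_{j=0}^{n_1-1}v_{t_\delta}(c_{j,t_\delta})/(f_{t_\delta}^{j})'(c_{1,t_\delta})$ shows that the $n_1$‑truncated sum tends to $D_{n_1}(0)/(f^{n_1-1})'(c_1)\ne0$, so the tail cannot be discarded. Invoking the symmetry $\lim_{x\uparrow c}f'(x)=-\lim_{x\downarrow c}f'(x)$, as you suggest, is not available — that hypothesis belongs to Theorem~\ref{nonlip}, not to Lemma~\ref{4.1}. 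As written, therefore, your argument does not prove the lemma when $c$ is periodic for $f_0$; the paper's itinerary argument sidesteps the issue precisely because it never passes a defect to a limit.
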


\begin{proof}[Proof of Lemma~\ref{4.1}]
First note that the assumption that $v$
is not horizontal implies that there exists $k_0\ge 1$ so that
$\partial_t c_{k_0,t}|_{t=0}\ne 0$.
Indeed,  assume for a contradiction that
$\partial_t c_{k,t}|_{t=0}= 0$ for all $k\ge 1$.
Then $\partial_t c_{1,t}|_{t=0}= 0$ implies $v(c)=0$, and, using
$v(c_{k})=\partial_t c_{k+1,t}|_{t=0}-f'(c_{k}) v(c_{k-1})$ for $k\ge 1$,
we  prove inductively that $v(c_k)=0$ for all $k\ge 1$, which would
imply that $v$ is horizontal, a contradiction.

Let $\Sigma(t)$ be the symbolic critical itinerary for $f_t$, that is,
$(\Sigma_1(t),\Sigma_2(t),\dots)  \in \{L,C,R\}^{\mathbb{N}}
$,
with  $\Sigma_j(t)=L$ if $f_t^j(c) < c$, $\Sigma_j(t)=C$ if $f_t^j(c)=c$, 
and  $\Sigma_j(t)=R$ if $f^j_t(c) >  c$. Put $\Theta(\Sigma,k_0)=\cap_{n\ge k_0} (f^n_t)^{-1} (I_{\Sigma_n})$, with $I_L=[a,c)$, $I_R=(c,b]$,
$I_C=\{c\}$.
The  map $t \mapsto \Theta(\Sigma(t),k_0)$  is continuous from $(-\epsilon,\epsilon)$
to $\real$. It is easy to see that $\Theta(\Sigma(t),k_0)=c_{k_0,t}$,
so that $\Theta(\Sigma(t),k_0)$  is not constant, and
that is enough to end the proof.
\end{proof}

\begin{proof}[Proof of Theorem~\ref{nonlip}]
The key property that we shall use is
that, for each fixed $k\ge 1$, the limit 
\begin{equation}\label{beta}
\beta_k:=\lim_{t \to 0} \frac{c_{k,t}-c_k}{t} 
\end{equation}
exists and satisfies the twisted cohomological equation
\begin{equation}\label{twisted}
X(c_{k+1})=\beta_{k+1}-f'(c_k) \beta_k \, .
\end{equation}
By definition, $\beta_1=X(c_1)$, so that 
\begin{equation}\label{formulabeta}
\beta_k=\sum_{j=0}^ {k-1} X(c_{k-j}) (f^ {j})'(c_{k-j})  .
\end{equation}
In particular, if $\JJ(f,X)\ne 0$, i.e. if $\alpha_{(0)}(c_1)\ne X(c_1)$ (recall
Lemma~\ref{nice}),
we have   $\beta_1\ne \alpha_{(0)}(c_1)$. 
We shall next have to be a little more careful about the limiting process (\ref{beta}), and distinguish
between the cases where $\gamma$ is zero or strictly positive.

Note that 
$$\beta_k \le |(f^{k-1})'(c_1)| \sup |X| (1-\lambda)^{-1}\, ,
$$ 
(recall (\ref{llambda}) for the definition of $\lambda$)
and put
$$
Y:= \max \{ \sup_t \left |\frac{f_t -f_0}{t}-v\right |_{L^\infty}, \sup_{x \ne c} |f''(x)|, 
\frac{\sup |X|}{1- \lambda^{-1}}, 1 \} \, .
$$
If $\gamma >0$, 
for fixed $t$, we let $M(t) \in \integer$ be the largest integer so that
\begin{equation}
\label{defm}
6 Y^3 |t| |(f^M)'(c_1)| < \gamma/2 \, .
\end{equation}
If $M(t) \ge 1$, it is not  difficult to show inductively that
for all $k \le M(t)$ we have $d(c_{k,t}, c_k) < \gamma/2$ and
\begin{equation}\label{control}
\left | \frac{c_{k,t}- c_k} {t} - \beta_k \right |
\le |t|6 Y^2 | (f^k)' (c_1)| \, .
\end{equation}
Indeed, define $B_{k,t}$ by 
$$
tB_{k,t} (f^k)'(c_1)= (c_{k,t}- c_k) /t - \beta_k \, ,
$$ 
and use
(\ref{twisted}) to see that
\begin{align*}
 \frac{c_{k+1,t}- c_{k+1}} {t} - \beta_{k+1}
&=v'(\tilde w_{k,t})(c_{k,t}-c_k) + t g_t(c_{k,t}) + t f'(c_k) B_{k,t}\\
&\qquad\qquad +t f''(w_{k,t}) (\beta_k +t B_{k, t})^2 \, ,
\end{align*}
where $g_t=(f_t-f_0)/t-v$ and
$w_{k,t}$ and $\tilde w_{k,t}$ are between $c_k$ and
$c_{k,t}$. Then it is easy to see that
$\sup_{k,t} |B_{k,t}| \le 6 Y^2$ for $k \le M(t)$ if $M(t)\ge 1$.

If $\gamma=0$, 
we let $M(t) \in \integer$ be the largest integer so that
\begin{equation}
\label{defmz}
6 Y^3 |t| |(f^M)'(c_1)|  < 1 \, .
\end{equation}
If $M(t) \ge 1$, our assumption that $\lim_{x \to c, x < c}f'(x)=\lim_{x \to c, x > c}f'(x)$
implies that (\ref{control}) holds
for all $k \le M(t)$.

We next revisit the construction  from
Subsection ~\ref{3.2} in order to allow comparison between
different nonperiodic dynamics.
For $\eta >0$,
consider the Banach space $(\widehat \BB_\infty, \|\cdot\|)$ of pairs $\phi=(\phi_{reg}, \phi_{sal})$
with $\phi_{reg}$ continuous and of bounded variation, 
and $\phi_{sal}=(u_k)_{k =1, \ldots, \infty}$,
normed by
\begin{equation}\label{clear2}
\|\phi\|= \|\phi_{reg}\|_{BV} + | \phi_{sal} |_\eta
\mbox{ with }|\phi_{sal}|_\eta =\sup_{1\le k\le\infty}  (1+\eta)^k  |u_k|\, ,
\end{equation}
and so that, in addition,
$
\phi_{reg}(x)=\sum_{k= 1}^{\infty} u_k$
for all $x < a$.
Recall the space $\widehat \BB_t$ associated to $f_t$ in Subsection ~\ref{3.2}.
If the postcritical orbit of $f_t$ is infinite then
$\widehat \BB_t=\widehat \BB_\infty$, and we set $\EE_t=\FF_t$ to be the identity
on $\widehat \BB_\infty$.
If the orbit of 
$c$ is finite (but not periodic) for $f_t$,
letting $n_{0,t}$ and $n_{1,t}$ be minimal so that
$c_{n_{0,t},t}$  is periodic of prime period $n_{1,t}$,
we introduce $\EE_t :\widehat \BB_t \to \widehat \BB_\infty$, 
which maps a finite vector $(w_j, 1\le j \le n_{0,t}+n_{1,t}-1)$ to
an infinite vector $v_\ell$ according to
\begin{align*}
v_\ell&= w_\ell\, , \qquad \ell \le n_{0,t}-1\, , \\
v_{n_{0,t}+j+\ell n_{1,t}}&=w_{n_{0,t}+j}\bigl( (f^{n_{1,t}}_t)'(c_{n_{0,t}+j,t})\bigr)^{\ell}
(1-\bigl ((f^{n_{1,t}}_t)'(c_{n_{0,t}+j,t})\bigr)^{-1})\, ,\\
&\qquad\qquad 0\le j \le n_{1,t}-1\, , \ell \ge 0  \, ,
\end{align*}
and $\FF_t:\widehat \BB_\infty \to \widehat \BB_t$  defined by
\begin{align*}
w_\ell&= v_\ell\, , \qquad \ell \le n_{0,t}-1\, , \\
w_{n_{0,t}+j}&=\sum_{\ell \ge 0} v_{n_{0,t}+j+\ell n_{1,t}}\, ,
&\qquad 0\le j \le n_{1,t}-1  \, .
\end{align*}
It is not difficult to see that $\EE_t$  and $\FF_t$
are bounded,  uniformly in small $t$, and that $\FF_t\circ \EE_t$ is the identity
on $\widehat \BB_t$.

This ends the preliminaries, and we now move on to the proof,
considering $\varphi \in C^\infty(I)$
so that $\int \varphi \, d\rho_0=0$ (this does not restrict
generality).
\end{proof}

\begin{proof}[Proof if the orbit of $c$ is infinite but not dense]
Assume that the closure of $\{ f^j(c) \mid j \ge 0\}$ is an infinite
set which does not coincide with
$[c_2, c_1]$.
Since the orbit of $c$
is not dense in $[c_2, c_1]$, there exists
a $C^\infty$ function $\varphi$ with $\int \varphi \, d \mu_0=0$
and $\varphi(c_j)=1$ for all $j \ge 1$.

Since $\JJ(f,X)\ne 0$,  Lemma~\ref{4.1}
 gives a sequence $t_n \to 0$
so that $c$ is not periodic for $f_{t_n}$. 
For $t=0$ or $t=t_n$ for some $n$, 
put
\begin{align}\label{newG}
\GG_t=\Gamma_{0} \circ  \FF_0\circ  \EE_t \circ \Gamma_t^{-1}:\BB_t \to
\BB_0
\, ,
\widetilde \GG_t=\Gamma_{t} \circ  \FF_t\circ  \EE_0 \circ \Gamma_0^{-1}:\BB_0 \to
\BB_t
\, ,
\end{align} 
(the above operators are bounded uniformly in $t$)
and redefine $\PP_t$ as 
$$\PP_t=\GG_t \circ \LL_{1,t} \circ \widetilde \GG_t:\BB_0\to \BB_0\, .
$$
Since $\EE_0=\FF_0$ is the identity, we find $\widetilde \GG_t\circ  \GG_t=\id$,
and  the spectral decomposition
$\LL_{1,t}^k(\varphi)= \rho_t \int \varphi\, dx + \RR^k_t(\varphi)$, with
$\|\RR^k_t\|_{\BB_t}\le C \tau ^k$, gives a spectral decomposition
$$
\PP_{t}^k(\phi)= \GG_t(\rho_t )\int \widetilde \GG_t( \phi)\, dx + \GG_t(\RR^k_t(\widetilde \GG_t(\phi))) \, .
$$
Using this new definition of
$\PP_t$, we revisit the proof of Theorem ~\ref{formula}, and we study
\begin{equation}\label{easyn'}
\rho_{tn}-\rho_0= (\GG_{t_n} (\rho_{t_n})-\rho_0) + (\rho_{t_n}-\GG_{t_n}(\rho_{t_n}))\,  .
\end{equation}

Assume first that $\gamma >0$.

Let us consider the first term in the right hand side of (\ref{easyn'}).
Step~ 1 of the proof of Theorem \ref{formula} until (\ref{lllast})
uses the fact that $f_t$ and $f_0$ are
conjugate  only (but essentially) to evaluate the
second term of (\ref{easyn'}).
Step ~3 of the proof of Theorem ~\ref{formula}  does not use the fact that 
$f_0$ and $f_t$ are conjugate,  so that 
(\ref{missarg'}) and  (\ref{missarg})
hold.
Step~ 2 of the proof of  Theorem~\ref{formula} appears to use the conjugacies $h_t$,
but a careful look reveals that
what is crucial there are properties (\ref{control})
and (\ref{twisted}) of $\beta_k$.
More precisely, taking $M(t_n)$
from (\ref{defm}) and replacing in Step ~2 the number $\alpha(c_k)$ by $\beta_k$, 
we use
$$
\frac{1}{|(f^{M+1})'(c_1)|}< \frac{12 Y^3 |t_n|}{\gamma}
$$  
to handle the truncated terms for $\ell > M(t_n)$, and deduce that there
is $ C$ depending only on $f$ and on $X$ so that 
$$
\|\PP_{t_n}(\rho_0)- \rho_0\|_{\BB_0} \le C |t_n|, \forall |t_n|< \delta,
t_n\mbox{ not periodic. } 
$$
(Note that $C=O(\gamma^{-1})$.) The above considerations imply that
there is $\widetilde C=O(\gamma^{-1})$ and $\delta > 0$ so that for all $|t_n|< \delta$ with
$c$ not periodic
$$
|\GG_{t_n}(\rho_{t_n})-\rho_0|_{Radon}\le \widetilde C |t_n| \, .
$$
Note that $\delta$ depends only on the constants in the Lasota-Yorke
inequality, on $\lambda$, and on the spectral gap $\tau<1$ of the transfer operator.

We now consider the first term in (\ref{easyn'}), that is
$$
\sum_{k \ge 1} \frac{s_{1,t_n}}{(f^{k-1}_{t_n})'(c_1)}
(H_{c_{k,t_n}}- H_{c_k}) \, .
$$
The terms for $k > M(t_n)$ give a contribution which is
$\le \bar C |t_n|$ for $\bar C=O(\gamma^{-1})$, so that we may restrict to
$k\le M(t_n)$.

Then for $k \ge 1$
$$
\lim_{t_n \to 0} \frac{s_{1,t_n}}{(f^{k-1}_t)'(c_1)}
\int \varphi \frac{H_{c_{k,t}}- H_{c_k}}{t_n} dx = 0 \, .
$$
It is easy to see that
there exists $N=N(f)$ so that $| s_1 \sum_{j=1}^k \frac{X(c_j)}{(f^{j-1})'(c_1)}|\ge \JJ(f,X)/2$
for all $k \ge N$.
Note 
that $N$ depends only on $\lambda$ and $\sup |X|$.
The properties of $\beta_k$ give $|C_{k,n}|\le \widehat C$
and  $|C_n |\le \widehat C$,
uniformly in $n$ and $k$, so that for all $t_n$
small enough so that $M(t_n) >  N$
\begin{align}
\nonumber& \left |\sum_{ k \le M(t_n)} \frac{s_{1,t_n}}{(f^{k-1}_t)'(c_1)}
\int \varphi \frac{H_{c_{k,t_n}}- H_{c_k}}{t_n} dx\right | 
=\left |\sum_{ k \le M(t_n)} (\beta_k + C_{k,n})
 \frac{s_1}{(f^{k-1})'(c_1)}
 \varphi(c_{k}) \right |\\
\nonumber&\qquad\qquad\qquad\qquad\qquad\qquad\qquad\qquad=
\left |C_n + s_1\sum_{ k \le M(t_n)} \varphi(c_k)\sum_{j=1}^k \frac{X(c_j)}{(f^{j-1})'(c_1)}\right |\\
 \label{main}&\qquad\qquad\qquad\qquad\qquad\qquad\qquad\qquad \ge 
(M(t_n)-N) | \frac{|\JJ(f,X)|}{2}- \widehat C \, .
\end{align}
Since $M(t_n)=\Theta(\ln(t_n))$, we have proved the theorem in the
case where the postcritical orbit is infinite but $\gamma >0$.

If the postcritical orbit is infinite and not dense, but
$\gamma=0$ then we should use
definition (\ref{defmz}) for $M(t)$. 
(We still have  $M(t_n)=\Theta(\ln(t_n))$.)
Our additional assumption then yields
constants $\widetilde C$ and $\bar C$  independent
of $\gamma$. 
\end{proof}

\begin{proof}[Proof if $c$ is preperiodic for $f$]
Assume that
$c_{n_0}$ ($n_0 > 0$ minimal for this property, note
that then $n_0 \ge 2$) is periodic of prime period $n_1\ge 1$, 
in particular  $\gamma > 0$.
Take a $C^\infty$ observable with $\int \varphi \, d\mu_0=0$ and
$$
\varphi(c_j)=1,\forall j \ge 1\,  .
$$
By Lemma~\ref{4.1}, there is a sequence $t_n\to 0$
so that $c$ has an infinite forward orbit under $f_{t_n}$.
For $t=0$ or $t=t_n$, recalling (\ref{newG}), consider
$\MM_t=\widetilde \GG_t \circ \LL_{1,0} \circ  \GG_t$ acting on $\BB_t$.
Since $\GG_t \circ \widetilde \GG_t=\id$, we have the spectral decomposition
$$
 \MM^k_t = \widetilde \GG_t(\rho_0) \int \GG_t(\varphi)\, dx+
 \widetilde \GG_t (\RR^k_0(\GG_t(\varphi))\, .
$$
We consider
\begin{equation}\label{easyn''}
\rho_{tn}-\rho_0= (\rho_{t_n})-\widetilde \GG_t (\rho_0)) +
(\widetilde \GG_t(\rho_{0})-\rho_0)\,  .
\end{equation}
Revisiting the proof of Theorem~\ref{formula} once more, using 
$\BB_t$  instead of $\BB_0$, we can treat this case in a manner analogous
to that of the infinite  postcritical orbit with $\gamma >0$.
\end{proof}

\begin{proof}[Proof if the orbit of $c$ is  dense]
We have  $\EE_0=\FF_0=\id$ and, using (\ref{newG}),
we can consider $\PP_t$ as in the case when the orbit is infinite
but not dense. The new difficulty resides in the choice of the
observable.

We recall (\cite[Thm 8.1]{Br}) the following
central limit theorem with speed for $f$ and $\mu_0$.
If $\int \varphi d\mu_0=0$
and if there is no $\tilde \varphi \in BV$ so that $\varphi=\tilde \varphi - \tilde \varphi\circ f$ in $BV$, i.e., except
on an at most countable set
(it is not difficult to see that such $\varphi \in C^\infty(I)$ exist)
then 
$$
\sigma^2:= \lim_{n\to \infty}\int  \left(\frac{\sum_{k=0}^{n-1} \varphi(f^k(x))}{\sqrt n}\right)^2\, d\mu_0> 0\, ,
$$
and there exists  $C(\varphi)$ depending only on the $C^1$-norm of $\varphi$
so that for any $y \in \real$
$$
|\mathbb P (\{x \mid
\sum_{k=0}^{n-1} \varphi(f^k(x)) \le y \sigma \sqrt n\})
-\frac{1}{\sqrt {2\pi}} \int_{-\infty}^y e^{-s^2/2} ds |\le \frac{C(\varphi) }{\sqrt n} \, .
$$
where $\mathbb  P (E)=\int \chi_E d\mu_0$.
Fix $\varphi$ satisfying the above conditions,
$y < 0$ small and let $N_1=N_1(y)$ be so that 
$\frac{C(\varphi) }{\sqrt N_1} < \frac{1}{\sqrt {2\pi}} \int_{-\infty}^y e^{-s^2/2} ds$.
Then there exists $x_0 \in [c_2,c_1]$ so that 
$$
|\sum_{k=0}^{n-1} \varphi(f^k(x_0))| \ge |y| \sigma \sqrt n, \forall n \ge N_1 \, .
$$
Since the postcritical orbit is dense,  for any $\delta > 0$ there
exists $j_0 \ge 1$ so that $d(c_{j_0}, x_0)< \delta$. Put
$\Lambda_f = \sup |f'|$. If $\delta \Lambda_f^m
\le \delta  |(f^m)' (c_{j_0})|<1/2$
for some $m \ge N_1$
then for all $j_0 \le n \le j_0+m$ we have
\begin{align}\label{gluu}
|\sum_{k=0}^{n-1} \varphi(c_{k+1}))|& \ge |\sum_{k=0}^{n-j_0-2} \varphi(f^k(x_0))|- 
2 \sup |\varphi'| -|\sum_{k=1}^{j_0-1} \varphi(c_{k})|\\
\nonumber 
&\ge |y| \sigma \sqrt {n-j_0}-2 \sup |\varphi'|- |\sum_{k=1}^{j_0-1} \varphi(c_{k})| \, .
\end{align}
Assume now for a contradiction that $|\int \varphi d\mu_t|\le A |t|$  for
some $A < \infty$ and all small enough $t$. 
Let $t_n\to 0$ be a sequence of parameters so that $c$ is not
periodic for $f_{t_n}$ (this exists by Lemma~\ref{4.1}).
Recall the argument in the case when the orbit of $c$
is infinite but not dense.
For $|t_n|<\delta_0$, let 
$\widetilde C$  be the Lipschitz constant corresponding to
the first term of (\ref{easyn'}) and let $\bar C$ be
the Lipschitz constant corresponding to the truncated terms for $k\ge M(t_n)$
(where $M(t_n)$ is defined by (\ref{defmz}))
in the second term of (\ref{easyn'}).

For arbitrarily small $t$,
taking $N$ as in the preperiodic case,
the chain of inequalities (\ref{main}) becomes
\begin{align*}
\nonumber &\left |\sum_{ k =1}^{ M(t_n)} \frac{s_{1,t_n}}{(f^{k-1}_{t_n})'(c_1)}
\int \varphi \frac{H_{c_{k,t_n}}- H_{c_k}}{t_n} dx\right |
=\left |C_{n} + s_1\sum_{ k =1}^{ M(t_n)} \varphi(c_k)\sum_{j=1}^k 
\frac{X(c_j)}{(f^{j-1})'(c_1)}\right |\\
&\qquad\qquad\qquad\ge 
|\sum_{ k=1}^{ M(t_n)} \varphi(c_k)|(M(t_n)-N)  \frac{|\JJ(f,X)|}{2}- \widehat C\,  .
\end{align*}
If 
$$
|\sum_{ k =1}^{ M(t_n)} \varphi(c_k)|(M(t_n)-N)  \frac{|\JJ(f,X)|}{2}- \widehat C -\widetilde C
-\bar C
> A \, ,
$$
we have obtained our contradiction.
Otherwise 
$$
|\sum_{ k=1}^{  M(t_n)} \varphi(c_k)|
\le \frac{(A+\widetilde C + \bar C +\widehat C)}{ (M(t_n)-N) } \frac{2}{|\JJ(f,X)|}\, .
$$
If the above held for all small enough $t$, then we would have proved that there
is $\epsilon >0$ and a constant $D(f_t,\varphi)$  with
$$
|\sum_{ k=1}^{ M(t_n)} \varphi(c_k)|
\le \frac{D}{ M(t_n)}\,  , \quad \forall |t_n|<\epsilon \, .
$$
We shall end the proof by showing that the above estimate gives a contradiction.

Recall that $\varphi$, $\sigma$,
$y<0$ and $N_1(y)$ are  fixed, and that we haven chosen a generic $x_0$ as above.
Take $m\ge N_1$ and let  $\delta>0$ be so that $\delta \Lambda^m_f<1/2$.
Then take $j_0(\delta)\ge 1$ so that $d(c_{j_0}, x_0)<\delta$.
If $j_0$ does not tend to infinity as $m\to \infty$,  then, recalling
(\ref{gluu}),  the following
expression tends to infinity as $m\to \infty$
$$
|\sum_{k=0}^{j_0+m-1} \varphi(c_{k+1})|\ge 
|y| \sigma \sqrt {m}-2 \sup |\varphi'|- |\sum_{k=1}^{j_0-1} \varphi(c_k)|\, ,
$$
and we have obtained a contradiction.
Otherwise, up to taking large enough $m$, there exist $s$ so that
$|M(s)-j_0|\le 1$, and $|t_n|\le |s|$ so that $|M(t_n)-j_0-m|\le 1$.
Then, recalling (\ref{gluu}) 
\begin{align*}\label{gla}
&|\sum_{k=0}^{j_0+m-1} \varphi(c_{k+1})|\ge
|y| \sigma \sqrt {m}-2 \sup |\varphi'|- |\sum_{k=1}^{j_0-1} \varphi(c_k)|
\ge |y| \sigma \sqrt {m}-2 \sup |\varphi'|- \frac{D}{j_0}\\
&|\sum_{k=0}^{j_0+m-1} \varphi(c_{k+1})|\le\frac{D}{M(t_n)}\, .
\end{align*}
The righmost lower bound in the first line clearly diverges as $m\to \infty$, giving the
desired contradiction.
\end{proof}


\begin{appendix}
\section{An auxiliary lemma}

\begin{lemma} \label{conju} 
Let $f$ and $g$ be two piecewise expanding
$C^1$ unimodal maps and assume that $c=0$.
If 
$
\sup_x  \{1/|f'(x)|, \ 1/|g'(x) |  \}  \leq  \theta$
 and 
$\sup_{x} |f(x)-g(x)|\leq \delta 
$,
then for all points $x_f$ and $x_g$ such that 
\begin{equation}
\label{same} f^k(x_f) \cdot g^k(x_g) \geq 0 \, , \forall k \le n\, ,
\end{equation}
we have 
$|x_f -x_g| < \theta^n + \frac{\delta}{1-\theta} 
$.
\end{lemma}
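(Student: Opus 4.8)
The plan is to propagate a single-step distance estimate backwards along the two orbit segments, exploiting that the sign condition $f^k(x_f)\cdot g^k(x_g)\ge 0$ keeps the $k$-th iterate of $x_f$ (under $f$) and of $x_g$ (under $g$) in a common branch of monotonicity, where the derivatives of $f$ and $g$ are bounded below in modulus by $\theta^{-1}>1$.

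First I would set $a_k=f^k(x_f)$, $b_k=g^k(x_g)$, and $d_k=|a_k-b_k|$ for $0\le k\le n$; all these points lie in $I$ since $f,g$ map $I$ into itself. The heart of the argument is the one-step bound, valid for $0\le k\le n-1$:
\[
d_{k+1}\ \ge\ \theta^{-1}d_k-\delta\, ,\qquad\text{i.e.}\qquad d_k\ \le\ \theta\,d_{k+1}+\theta\delta\, .
\]
To get it, observe that $a_k b_k=f^k(x_f)\cdot g^k(x_g)\ge 0$ puts $a_k$ and $b_k$ (and hence the whole segment between them) into a single one of the intervals $I_+=[a,c]$, $I_-=[c,b]$, with $c=0$. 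On that interval $f$ is strictly monotone, extends to a $C^1$ map, and has $|f'|\ge\theta^{-1}$, so the mean value theorem gives $|f(a_k)-f(b_k)|\ge\theta^{-1}d_k$; combined with $|f(b_k)-g(b_k)|\le\delta$ this yields $d_{k+1}=|f(a_k)-g(b_k)|\ge|f(a_k)-f(b_k)|-\delta\ge\theta^{-1}d_k-\delta$.

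Then I would iterate $d_k\le\theta\,d_{k+1}+\theta\delta$ from $k=0$ to $k=n$, which gives
\[
d_0\ \le\ \theta^n d_n+\delta\sum_{j=1}^{n}\theta^{j}\ \le\ \theta^n\operatorname{diam}(I)+\frac{\delta}{1-\theta}\, ,
\]
and conclude using $\operatorname{diam}(I)\le 1$ (which may be taken as a normalisation of $I$), together with $\theta<1$ to turn $\le$ into $<$.

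The only delicate point, and hence the ``main obstacle'', is bookkeeping rather than substance: one must make sure the monotone-branch / mean-value step is applied only for indices $0\le k\le n-1$ (the range in which the sign hypothesis is available for both $k$ and $k+1$), and that the boundary cases $a_k=c$ or $b_k=c$ cause no trouble — they do not, since $f$ restricted to either $I_\sigma$ is $C^1$ up to the endpoint $c$. Everything else is the elementary geometric-series estimate above.
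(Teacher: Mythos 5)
Your proof is correct and is essentially the paper's argument, viewed from the opposite direction: the paper applies the contraction estimate to \emph{extended inverse branches} $\psi^f_\sigma,\psi^g_\sigma$ (asserting they can be extended with $|(\psi^{\cdot}_\sigma)'|\le\theta$ and $\sup|\psi^f_\sigma-\psi^g_\sigma|<\delta$), while you obtain the same one-step inequality $d_k\le\theta\,d_{k+1}+\theta\delta$ directly from the expansion of the forward maps via the mean value theorem and the triangle inequality, avoiding any need to extend inverse branches. The key idea in both is identical -- the sign condition $f^k(x_f)\cdot g^k(x_g)\ge 0$ keeps the $k$-th iterates in a common monotonicity branch, which feeds a geometric iteration -- and your bookkeeping (index range $0\le k\le n-1$, geometric series, implicit normalisation $\mathrm{diam}(I)\le 1$) matches what the paper leaves implicit.
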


\begin{proof} 
We can extend the inverse branches of $f$ and $g$, denoted 
$\psi^f_\sigma$, $\psi^g_\sigma$,  for $\sigma\in \{+,-\}$, 
to  $C^1$ diffeomorphisms  defined on $f(I)\cup g(I)$,
so that they also have derivatives bounded from above by  $\theta$ and 
$$
\max_{\sigma=+,-} \sup_{y \in f(I)\cup g(I)} |\psi^f_{\sigma}(y)- \psi^g_{\sigma}(y)|< 
\delta \, .
$$
Condition (\ref{same}) implies that there exists a sequence $\sigma_k \in \{+,-\}$, 
$k\leq n$, such that 
$$\psi^f_{\sigma_{1}}\circ \cdots \circ \psi^f_{\sigma_n}(f^n(x_f))=x_f \mbox{ and }  
\psi^g_{\sigma_{1}}\circ \cdots \circ \psi^g_{\sigma_n}(f^n(x_g))=x_g \, .
$$
The lemma then follows from
\begin{align*}
&|f^k(x_f)-g^k(x_g)|=|\psi^f_{\sigma_{k+1}}(f^{k+1}(x_f))-
\psi^g_{\sigma_{k+1}}(g^{k+1}(x_g))|
\\
&\, \, \leq
|\psi^f_{\sigma_{k+1}}(f^{k+1}(x_f))-\psi^f_{\sigma_{k+1}}(g^{k+1}(x_g))|+
|\psi^f_{\sigma_{k+1}}(g^{k+1}(x_g))-\psi^g_{\sigma_{k+1}}(g^{k+1}(x_g))|
\\
&\, \, 
\leq  \theta|f^{k+1}(x_f)-g^{k+1}(x_g)|  +  \delta \, .
\end{align*}
\end{proof}


\section{Keller-Liverani bounds for sequences of weak norms}
\label{newKL}

We explain how (\ref{hand0}) and (\ref{missarg'}) imply that for each $\gamma >0$,
there exist $\epsilon_0 >0$ and  $K\ge 1$ so that
\begin{equation}\label{claim0}
\|(z- \PP_t)^{-1}\|_{\BB_0}
\le K  \, , \quad  \forall |t| < \epsilon_0\, ,
\mbox{ if } |z| \ge \tau\mbox{ and }  |z-1| \ge \gamma \, ,
\end{equation}
by
adapting the proof of \cite[Theorem~1]{KL} of Keller and Liverani.
Since we have
\begin{align*}
 (\id -\PP_t)^{-1}(\id-\widehat \Pi_t)(\varphi)=
 -\frac{1}{2i\pi}
 \oint \frac{1}{z-1} (z- \PP_t)^{-1}(\varphi)\, dz \, , \quad
\forall \varphi \in \BB_0\, ,
\end{align*}
(on any contour $|z-1|=\gamma$ with $\gamma \in (0, 1- \tau)$),
the bound (\ref{claim0}) implies that $\|(\id -\PP_t)^{-1}(\id-\widehat \Pi_t)\|_{\BB_0}$
is bounded uniformly in  $|t|<\epsilon_0$, i.e., (\ref{newKL'}).

\smallskip
Fix $\lambda <\tau <1$ as after (\ref{ttau}).
The first remark is that \cite[Lemma 1]{KL} is replaced by the claim
that there exist $\epsilon_1$, $n_1$ and $C_1$, depending only on $C$
from (\ref{hand0}) and on $\tau$,
so that for any $|z| \ge \tau$, all $\varphi \in \BB_0$, all $|t|\le \epsilon_1$
\begin{equation}\label{lemma1}
\| \varphi\|_{\BB_0} \le C_1 \| \widehat Q_t(z) \varphi \|_{\BB_0} + C_1 | \varphi|_{weak, n_1} \, .
\end{equation}
Now,  the beginning of the proof of
\cite[Theorem~1]{KL} gives that that (\ref{hand0}) and (\ref{missarg'}) imply
for   all $m\ge 0$, $n\ge 0$ and all $|z|\ge \tau$, we have
(see \cite[(12)]{KL})
\begin{align}\label{mn}
|\widehat \QQ_t(z)^{-1} \varphi|_{weak, m}
\le &
\biggl (\| \widehat \QQ_0^{-1}(z)\|_{\BB_0}  C(2C +|z|) \left (\frac{\lambda}{\tau}\right )^n\\
\nonumber &
+ \bigl (\| \widehat \QQ_0^{-1}(z)\|_{\BB_0} C + \frac{C}{1-\tau}\bigr ) 
(C\delta_{m+n}(t)) \left (\frac{1}{\tau}\right )^n
 \biggr ) \| \varphi\|_{\BB_0}\\
\nonumber &
+ \bigl (\| \widehat \QQ_0^{-1}(z)\|_{\BB_0} C + \frac{C}{1-\tau}\bigr ) 
 \left (\frac{1}{\tau}\right )^n |\varphi|_{weak, m+n}\, .
\end{align}
Fix $\gamma >0$, write 
$H=\sup_{|z|\ge \tau, |z-1|>\gamma}\| \widehat \QQ_0^{-1}(z)\|_{\BB_0}
$,
and take 
$$
n_2 = \left [ \frac{\ln(4C_1 H C (2C+2) )}{\ln(\tau/\lambda)} \right ] \, .
$$
Then two applications of (\ref{lemma1})  as in the proof of \cite[(15)]{KL} 
(taking $m=n_1$, $n=n_2$ in (\ref{mn}))
show that, taking,
$$\epsilon_0
=\sup \left \{ |t|
\mid  \delta_{n_1+n_2}(t) \biggl (HC+\frac{C}{1-\tau} \biggr ) \left( \frac{1}{\tau}
\right)^{n_2} \le\frac{1}{4C_1}  \right \} \, ,
$$ 
we have
\begin{align*}
\|\varphi\|_{\BB_0} &\le 2 C_1 \| \widehat \QQ_t (z)(\varphi)\|_{\BB_0}
+\frac{1}{2 \delta_{n_1+n_2}(\epsilon_2)} | \widehat\QQ_t(z)(\varphi)|_{weak, n_1+n_2}\\
&\le  K \| \widehat \QQ_t (z)(\varphi)\|_{\BB_0} \, ,
\end{align*}
for all $|t|\le  \epsilon_0$, and
any $|z|\in [ \tau, 2]$ with $|z-1|>\gamma$,  proving (\ref{claim0}).
\end{appendix}
\bibliographystyle{amsplain}

\end{document}